
\documentclass[11pt]{amsart}
\usepackage{amssymb, amsmath, amscd, eucal, rotating}
\usepackage{pstricks}
\usepackage{mathrsfs}

\input xy
 \xyoption{all}

\oddsidemargin=-5 true mm
\evensidemargin=-5 true mm
\textwidth=6.5 true in
\textheight=220 true mm



\numberwithin{equation}{section}

\theoremstyle{plain}

\newtheorem{proposition}{Proposition}[section]
\newtheorem{theorem}[proposition]{Theorem}		
\newtheorem{corollary}[proposition]{Corollary}
\newtheorem{lemma}[proposition]{Lemma}

\theoremstyle{definition}

\newtheorem{definition}[proposition]{Definition}
\newtheorem{remark}[proposition]{Remark}
\newtheorem{example}[proposition]{Example}



\newcommand{\C}{\mathbb C}
\newcommand{\R}{\mathbb R}

\newcommand{\Gr}{\mathop{\rm Gr}\nolimits}

\newcommand{\End}{\mathop{\rm End}\nolimits}
\newcommand{\coker}{\mathop{\rm coker}\nolimits}

\newcommand{\Hom}{\mathop{\rm Hom}\nolimits}

\newcommand{\ad}{\mathop{\rm ad}\nolimits}
\newcommand{\Ad}{\mathop{\rm Ad}\nolimits}

\newcommand{\tr}{\mathop{\rm Tr}\nolimits}

\newcommand{\GL}{\mathsf{GL}}

\newcommand{\U}{\mathsf{U}}

\newcommand{\doubleslash}{\bigr/ \negthinspace\negthinspace \bigr/}

\def\overlaprel{\mathrel{\mkern-4mu}}

\newcommand{\longlongrightarrow}{\ensuremath{\relbar\overlaprel\longrightarrow}}

\DeclareMathOperator{\HNJH}{HNJH}
\DeclareMathOperator{\JH}{JH}

\DeclareMathOperator{\Rep}{Rep}
\DeclareMathOperator{\Vect}{Vect}
\DeclareMathOperator{\id}{id}
\DeclareMathOperator{\im}{im}
\DeclareMathOperator{\rank}{rank}
\DeclareMathOperator{\slope}{slope}

\DeclareMathOperator{\grad}{grad}

\DeclareMathOperator{\Rel}{Rel}


\setcounter{tocdepth}{2}



\begin{document}


\title{Moment map flows and the Hecke correspondence for quivers}

\author{Graeme Wilkin}
\address{Department of Mathematics,
National University of Singapore, 
Singapore 119076}
\email{graeme@nus.edu.sg}
\date{\today}

\thanks{This research was partially supported by grant number R-146-000-200-112 from the National University of Singapore. The author also acknowledges support from NSF grants DMS 1107452, 1107263, 1107367 ``RNMS GEometric structures And Representation varieties'' (the GEAR Network).}

\subjclass[2010]{Primary: 53D20; Secondary: 58E05, 14D21}

\begin{abstract}
In this paper we investigate the convergence properties of the upwards gradient flow of the norm-square of a moment map on the space of representations of a quiver. The first main result gives a necessary and sufficient algebraic criterion for a complex group orbit to intersect the unstable set of a given critical point. Therefore we can classify all of the isomorphism classes which contain an initial condition that flows up to a given critical point. As an application, we then show that Nakajima's Hecke correspondence for quivers has a Morse-theoretic interpretation as pairs of critical points connected by flow lines for the norm-square of a moment map. The results are valid in the general setting of finite quivers with relations. 
\end{abstract}



\maketitle


\thispagestyle{empty}

\baselineskip=16pt



\section{Introduction}

There is a well-known correspondence between quotients in symplectic and algebraic geometry. For example, the Kempf-Ness theorem relates GIT quotients and symplectic quotients of affine spaces \cite{KempfNess79} and the Donaldson-Uhlenbeck-Yau theorem relates moduli spaces of polystable holomorphic bundles to moduli spaces of Yang-Mills minima \cite{Donaldson85, Donaldson87, UhlenbeckYau86}. 

In many examples of interest, there is also a symplectic-algebraic correspondence between GIT unstable points and critical points of a moment map functional, given by taking the limit of the downwards gradient flow of the norm-square of a moment map. This originated in Kirwan's work \cite{Kirwan84} for projective varieties, and the case of the Yang-Mills flow is studied in \cite{Daskal92}, \cite{DaskalWentworth04} and \cite{Sibley15}. A version of this theorem for quiver varieties is proved in \cite[Thm. 3]{HaradaWilkin11}, which has since been generalised in \cite{Hoskins14} to the case of reductive group actions on affine spaces.

The goal of this paper is to further extend this correspondence between symplectic and algebraic geometry to spaces of flow lines between critical sets on the space of representations of a quiver with relations. The symplectic side of the picture determines the critical points and the flow lines for the norm-square of the moment map and one of the main theorems of this paper gives an algebraic criterion for the existence of a flow line connecting two critical points. Using this criterion we then show that critical points connected by flow lines can be interpreted in terms of the Hecke correspondence for quivers defined in \cite{Nakajima94}, \cite{Nakajima98}.

The setup is explained in detail in Section \ref{subsec:quiver-background}; here we provide a summary of the points relevant to the rest of the introduction. The vector space $\Rep(Q, {\bf v})$ of complex representations of a quiver with a fixed dimension vector ${\bf v}$  has a natural symplectic structure determined by the Hermitian structure on the complex vector space at each vertex of the quiver. Associated to this is a Hamiltonian action of a group $K_{\bf v}$ and a moment map which we denote by $\mu$. The complexification of $K_{\bf v}$ is denoted $G_{\bf v}$. In the paper \cite{HaradaWilkin11} we studied the downwards gradient flow of $\| \mu - \alpha \|^2$ for any stability parameter $\alpha$ and showed the existence of a Morse stratification which coincides with the algebraic Harder-Narasimhan stratification defined by Geometric Invariant Theory (cf. \cite{Reineke03}). The main theorem of \cite{HaradaWilkin11} identifies the isomorphism class of the limit of the downwards flow with the graded object of the Harder-Narasimhan-Jordan-H\"older filtration associated to the initial condition.

The entire setup described above restricts to any closed subset $Z \subset \Rep(Q, {\bf v})$ which is preserved by the action of $G_{\bf v}$. An important special case of this is the variety of representations of a quiver satisfying a finite set of relations in the path algebra. Moreover, the critical sets of the norm-square of the moment map have a simple interpretation in terms of representations which are the direct sum of subrepresentations of different slopes minimising the norm-square of the moment map (cf. \eqref{eqn:critical-splitting}). Therefore the critical sets can be classified by the dimension and slope of each of the subrepresentations in this splitting in analogy with the classification of the critical sets of the Yang-Mills functional described by Atiyah and Bott in \cite{AtiyahBott83}. 

Given a critical point $x$, let $W_x^-$ denote the \emph{unstable set} of initial conditions for which the upwards flow of $\| \mu - \alpha \|^2$ converges to $x$. Associated to the critical point is another space called the \emph{negative slice} $S_x^-$, which is defined in terms of a group action around the critical point (cf. Definition \ref{def:neg-slice-def}). When the ambient space is smooth then $S_x^-$ is simply the exponential image of the negative eigenspace of the Hessian. Given a critical set $C$, we have fibrations $\pi_w : W_C^- \rightarrow C$ and $\pi_s : S_C^- \rightarrow C$ for which $\pi_w^{-1}(x) = W_x^-$ and $\pi_s^{-1}(x) = S_x^-$. The first main result of the paper is Theorem \ref{thm:homeo-slice-bundle}.

\begin{theorem}
There are neighbourhoods $U$ and $V$ of $C \subset Z$ and a $K_{\bf v}$-equivariant homeomorphism $\psi :S_C^- \cap U \rightarrow W_C^- \cap V$. Moreover, for each $y \in S_C^- \cap U$ there exists $g(y) \in G_{\bf v}$ such that $\psi(y) = g(y) \cdot y \in W_C^- \cap V$. 
\end{theorem}

Since the negative slice $S_x^-$ is defined algebraically, then it is possible to classify the isomorphism classes of representations in $S_x^-$ in terms of filtrations for which the quotients are semistable, the slopes are increasing and the graded object is isomorphic to the critical point $x$ (cf. Lemma \ref{lem:neg-slice-filtration}). Therefore, using the above theorem we can completely classify the isomorphism classes in $W_x^-$. 

\begin{theorem}\label{thm:algebraic-unstable-intro}
Let $x$ be a critical point of $\| \mu - \alpha \|^2$ on $\nu^{-1}(0)$ and let $x = x_1 \oplus \cdots \oplus x_n$ be the decomposition as a direct sum of stable representations as in \eqref{eqn:critical-decomposition}, ordered so that $\slope_\alpha(x_j) < \slope_\alpha(x_k)$ for all $j < k$. Then every $y \in W_x^-$ admits a  filtration $y_1 \subset \cdots \subset y_n$ such that each quotient $y_k / y_{k-1}$ is isomorphic to $x_k$ for $k=1, \ldots, n$. Conversely, let $y \in \nu^{-1}(0)$ be a representation admitting a filtration $y_1 \subset \cdots \subset y_n$ such that each quotient $y_k / y_{k-1}$ is isomorphic to $x_k$ for $k=1, \ldots, n$. Then there exists $g \in G_{\bf v}$ such that $g \cdot y \in W_x^-$.
\end{theorem}

In contrast to the Harder-Narasimhan filtration, where the slopes of the semistable quotients are decreasing, filtrations of the type in the above theorem are not necessarily unique. This reflects the fact that a single isomorphism class may contain different initial conditions that flow up to different critical points. Equivalently, if $y \in W_x^-$ then there may exist $g \in G_{\bf v}$ such that $g \cdot y \in W_{x'}^-$ for some higher critical point $x'$, or perhaps the upwards flow with initial condition $g \cdot y$ may not converge to any critical point. Therefore we see the difference between the properties of the upwards and downwards flow: the results of \cite{HaradaWilkin11} show that the isomorphism class of the limit of the downwards flow is an isomorphism invariant of the initial condition, however Theorem \ref{thm:algebraic-unstable-intro} shows that the upwards flow is more delicate in the sense that a small perturbation within an isomorphism class may completely change the convergence properties of the upwards flow.

Together with known results on the limit of the downwards flow from \cite{HaradaWilkin11}, Theorem \ref{thm:algebraic-unstable-intro} leads to an algebraic criterion for two critical points to be connected by a flow line, which we can then exploit to give a Morse-theoretic construction of the Hecke correspondence in Theorem \ref{thm:hecke-intro} below.

The precise statement is as follows. Let $Q$ be a finite quiver, choose a dimension vector ${\bf v}$, and let $\alpha$ denote the stability parameter from \cite{Nakajima94}, \cite{Nakajima98} (cf. Definition \ref{def:aasp}). Let $\Rep(Q, {\bf v})$ denote the affine space of representations with dimension vector ${\bf v}$, let $G_{\bf v}$ be the associated complex reductive group acting on $\Rep(Q, {\bf v})$ (cf. \eqref{eqn:group-action}) and let $Z \subset \Rep(Q, {\bf v})$ be any closed subset such that $G_{\bf v} \cdot Z \subset Z$. As mentioned above, a particular case of interest is when $Z$ is the subvariety of representations satisfying a finite set of relations on the quiver. Define $f : \Rep(Q, {\bf v}) \rightarrow \R$ by $f(x) = \| \mu(x) - \alpha \|^2$. The gradient flow for $f$ is the solution of \eqref{eqn:neg-grad-flow}. 

The above choice of stability parameter implies that any critical point $x$ for $f$ splits into two subrepresentations which we denote $x = x_1 \oplus x_2$, where $x_1$ is stable of negative slope, and $x_2$ is semistable of positive slope. Given dimension vectors ${\bf v_1^u} < {\bf v_1^\ell} < {\bf v}$, let $C_{\bf v_1^u}$ (resp. $C_{\bf v_1^\ell}$) denote the critical sets on $Z$ for which the associated stable subrepresentation of negative slope has dimension vector ${\bf v_1^u}$ (resp. ${\bf v_1^\ell}$). Since ${\bf v_1^u} < {\bf v_1^\ell}$ then $f(C_{\bf v_1^u}) > f(C_{\bf v_1^\ell})$. Now let $\mathcal{P}_{{\bf v_1^u}, {\bf v_1^\ell}} \subset C_{\bf v_1^u} \times C_{\bf v_1^\ell}$ denote the subset of pairs of critical points which are connected by a gradient flow line. There are projection maps $\mathcal{P}_{{\bf v_1^u}, {\bf v_1^\ell}} \rightarrow C_{\bf v_1^u}$ and $\mathcal{P}_{{\bf v_1^u}, {\bf v_1^\ell}} \rightarrow C_{\bf v_1^\ell}$ defined by projecting a flow line to its upper and lower endpoints. Lemma \ref{lem:quotient-critical} shows that there are also natural projection maps $C_{\bf v_1^u} \rightarrow \mathcal{M}(Q, {\bf v_1^u})$ (resp. $C_{\bf v_1^\ell} \rightarrow \mathcal{M}(Q, {\bf v_1^\ell})$) onto the moduli space of $\alpha$-stable representations with dimension vector ${\bf v_1^u}$ (resp. ${\bf v_1^\ell}$). 

Since the flow is equivariant with respect to the maximal compact subgroup $K_{\bf v} \subset G_{\bf v}$, then there is an induced correspondence variety $\mathcal{M}_{{\bf v_1^u}, {\bf v_1^\ell}}$ which fits into the diagram below
\begin{equation*}
\xymatrix{
& \ar[dl] \mathcal{P}_{{\bf v_1^u}, {\bf v_1^\ell}} \ar[dr] \ar@{-->}[d] & \\
C_{\bf v_1^u} \ar[d] & \ar[dl] \mathcal{M}_{{\bf v_1^u}, {\bf v_1^\ell}} \ar[dr] & C_{\bf v_1^\ell} \ar[d] \\
\mathcal{M}(Q, {\bf v_1^u}) & & \mathcal{M}(Q, {\bf v_1^\ell})
}
\end{equation*}

Now suppose that ${\bf v_1^u} = {\bf v_1^\ell} - {\bf e_k}$, where ${\bf e_k}$ denotes the dimension vector which is equal to one at the $k^{th}$ vertex of $Q$ and zero elsewhere. The following is the main result of the paper.
\begin{theorem}[Theorem \ref{thm:miniscule-flow-hecke}]\label{thm:hecke-intro}
$\mathcal{M}_{{\bf v_1^u}, {\bf v_1^\ell}}$ is the Hecke correspondence.
\end{theorem}

In the case where ${\bf v_1^u} < {\bf v_1^\ell} - {\bf e_k}$, one has to distinguish between broken and unbroken flow lines. For the Yang-Mills-Higgs flow on the space of Higgs bundles, this was done in \cite{wilkin-YMH-flow-lines} in terms of certain secant varieties of the space of Hecke modifications inside the negative slice at a critical point. For the Yang-Mills flow, this involves studying secant varieties of the projectivisation of the underlying bundle inside a space of bundle extensions, and for the Yang-Mills-Higgs flow the corresponding picture involves studying secant varieties of the spectral curve.

To distinguish between broken and unbroken flow lines in the space of representations of quivers, the analogous idea also involves secant varieties of the space of Hecke modifications, and it is natural to ask whether one can use algebraic methods to explicitly describe the compactification of the space of unbroken flow lines by broken flow lines. In \cite{wilkin-quiver-broken-flow-lines} we further develop the methods of this paper to answer this question.

\subsection{Organisation of the paper}
Section \ref{sec:smooth-space} contains the background theory for the properties of the norm-square of the moment map on the vector space of complex representations of a quiver. In Section \ref{sec:singular-space} we show how these properties restrict to a singular subset invariant under the group action and define the moduli spaces of flow lines. Section \ref{sec:gradient-Nakajima} contains the main results of the paper leading to the classification of critical points connected by flow lines in terms of the Hecke correspondence.

{\bf Acknowledgements.} The author would like to thank George Daskalopoulos, Richard Wentworth and Matthew Young for useful discussions and Hiraku Nakajima for pointing out the stability parameter from Definition \ref{def:aasp}.

\section{Background results for quivers without relations}\label{sec:smooth-space}

This section contains the basic results and notational setup used in the rest of the paper. The goal is to study the gradient flow, the structure of the critical sets and the eigenspaces of the Hessian for the function $\| \mu - \alpha \|^2$ on the vector space $\Rep(Q, {\bf v})$ (where we can apply theorems for smooth manifolds) before restricting to the singular subvariety associated to a quiver with relations in Section \ref{sec:singular-space}.  In Sections \ref{subsec:quiver-background}--\ref{subsec:symplectic} we set up the notation and summarise known results used in the rest of the paper, in Section \ref{subsec:critical-sets} we derive some useful formulae for the gradient flow on the space of metrics and in Section \ref{subsec:hessian} we prove results about the Hessian of $\| \mu - \alpha \|^2$ at a critical point and construct the unstable bundle and negative slice bundle over a critical set.

\subsection{Quiver varieties}\label{subsec:quiver-background}

\begin{definition}
A \emph{quiver} $Q$ is a directed graph, consisting of vertices $\mathcal{I}$, edges $\mathcal{E}$, and head/tail maps $h,t : \mathcal{E} \rightarrow \mathcal{I}$. 

A \emph{complex representation of a quiver} consists of a collection of complex vector spaces $\{ V_i \}_{i \in \mathcal{I}}$, and $\C$-linear homomorphisms $\{ x_a : V_{t(a)} \rightarrow V_{h(a)} \}_{a \in \mathcal{E}}$. The \emph{dimension vector} of a representation is the vector ${\bf v} := ( \dim_\C V_i )_{i \in \mathcal{I}} \in \mathbb{Z}_{\geq 0}^\mathcal{I}$. The vector space of all representations with fixed dimension vector is denoted 
\begin{equation*}
\Rep(Q, {\bf v}) := \bigoplus_{a \in \mathcal{E}} \Hom(V_{t(a)}, V_{h(a)}) .
\end{equation*}
\end{definition}

The group 
\begin{equation}\label{eqn:reductive-group-def}
G_{\bf v} := \prod_{i \in \mathcal{I}} \GL(V_i, \C)
\end{equation}
acts on the space $\Rep(Q, {\bf v})$ via the induced action on each factor $\Hom(V_{t(a)}, V_{h(a)})$ 
\begin{equation}\label{eqn:group-action}
( g_i )_{i \in \mathcal{I}} \cdot (x_a)_{a \in \mathcal{E}} := \left( g_{h(a)} x_a g_{t(a)}^{-1} \right)_{a \in \mathcal{E}}  
\end{equation}

The infinitesimal action of the Lie algebra $\mathfrak{g}_{\bf v}$ at a representation $x \in \Rep(Q, {\bf v})$ is denoted $\rho_x^\C : \mathfrak{g}_{\bf v} \rightarrow T_x \Rep(Q, {\bf v}) \cong \Rep(Q, {\bf v})$. A calculation shows that
\begin{equation}\label{eqn:complex-inf-action}
\rho_x^\C (u) := \left. \frac{d}{dt} \right|_{t=0} e^{tu} \cdot x = \bigoplus_{a \in \mathcal{E}} \left( u_{h(a)} x_a - x_a u_{t(a)} \right) \in \bigoplus_{a \in \mathcal{E}} \Hom(V_{t(a)}, V_{h(a)}).
\end{equation}

The direct sum of all the vector spaces is denoted
\begin{equation*}
\Vect(Q, {\bf v}) := \bigoplus_{i \in \mathcal{I}} V_i .
\end{equation*}
Given a representation $x \in \Rep(Q, {\bf v})$, we can consider each component $x_a$ as a homomorphism $\Vect(Q, {\bf v}) \rightarrow \Vect(Q, {\bf v})$ via the inclusion $\Hom(V_{t(a)}, V_{h(a)}) \subseteq \End \left( \Vect(Q, {\bf v}) \right)$.

There is a notion of slope-stability for quivers introduced by King in \cite{King94}, which corresponds to the usual definition of stability from GIT. Recall from \cite[Lemma 2.2]{King94} that GIT-stability on $\Rep(Q, {\bf v})$ is equivalent to defining a lift of the $G_{\bf v}$-action to a line bundle over $\Rep(Q, {\bf v})$. In contrast to the case of GIT on a projective variety (where the line bundle is determined by the projective embedding), in this case the line bundle is the trivial bundle $\Rep(Q, {\bf v}) \times \C$, and the lift of the action is determined by the choice of a stability parameter.

Given $\alpha = (\alpha_i)_{i \in \mathcal{I}} \in \mathbb{Z}^\mathcal{I}$, define the lift of the $G_{\bf v}$-action to $\Rep(Q, {\bf v}) \times \C$ by
\begin{equation}\label{eqn:lifted-action}
g \cdot (x, \xi) := \left( g \cdot x, \chi_\alpha(g) \xi \right) ,
\end{equation}
where the character $\chi_\alpha : G_{\bf v} \rightarrow \C$ is defined to be
\begin{equation*}
\chi_\alpha(g) = \prod_{i \in \mathcal{I}} \left( \det g_i \right)^{\alpha_i} .
\end{equation*}

\begin{definition}
An \emph{admissible stability parameter} for $\Rep(Q, {\bf v})$ is a choice of $\alpha = (\alpha_i)_{i \in \mathcal{I}} \in \mathbb{Z}^\mathcal{I}$ such that
\begin{equation*}
\sum_{i \in \mathcal{I}} \alpha_i v_i = 0 .
\end{equation*}
\end{definition}

\begin{remark}
The subgroup $\{ (\lambda \cdot \id_{V_i})_{i \in \mathcal{I}} \, : \, \lambda \in \C^* \} \subseteq G_{\bf v}$ acts trivially on $\Rep(Q, {\bf v})$. An equivalent definition of admissibility is that $\alpha$ is an admissible stability parameter if and only if the subgroup of scalar multiples of the identity in $G_{\bf v}$ also acts trivially on the line bundle $\Rep(Q, {\bf v}) \times \C$ via \eqref{eqn:lifted-action}. This is essential for the definition of stability in Definition \ref{def:stability}, since all points would be unstable if the parameter is not admissible. 
\end{remark}

The definition of GIT stability and semistability with respect to an admissible stability parameter $\alpha$ is then the usual one (first described for representations of quivers in \cite{King94}), which we recall in the following.

\begin{definition}\label{def:stability}
A representation $x \in \Rep(Q, {\bf v})$ is \emph{$\alpha$-semistable} if, for all nonzero $\xi \in \C$, the closure of the $G_{\bf v}$-orbit of $(x, \xi)$ in the trivial line bundle $\Rep(Q, {\bf v}) \times \C$ does not intersect the zero section, i.e.
\begin{equation*}
\overline{G_{\bf v} \cdot (x, \xi)} \cap \left( \Rep(Q, {\bf v}) \times \{ 0 \} \right) = \emptyset .
\end{equation*}

A representation $x \in \Rep(Q, {\bf v})$ is \emph{$\alpha$-polystable} if $x$ is $\alpha$-semistable and the $G_{\bf v}$-orbit of $(x, \xi)$ in $\Rep(Q, {\bf v})$ is closed for all nonzero $\xi \in \C$, .

A representation $x \in \Rep(Q, {\bf v})$ is \emph{$\alpha$-stable} if $x$ is $\alpha$-polystable and the isotropy group of $x$ in $G_{\bf v}$ consists only of the scalar multiples of the identity.
\end{definition}

The space of $\alpha$-stable (respectively $\alpha$-semistable and $\alpha$-polystable) representations is denoted $\Rep(Q, {\bf v})^{\alpha-st}$ (respectively $\Rep(Q, {\bf v})^{\alpha-ss}$ and $\Rep(Q, {\bf v})^{\alpha-polyst}$).

\begin{definition}
The \emph{GIT quotient} of $\Rep(Q, {\bf v})$ by $G_{\bf v}$ with respect to the stability parameter $\alpha$ is
\begin{equation*}
\mathcal{M}_\alpha(Q, {\bf v}) = \Rep(Q, {\bf v}) \doubleslash_\alpha G_{\bf v} := \Rep(Q, {\bf v})^{\alpha-ss} \doubleslash G_{\bf v} = \Rep(Q, {\bf v})^{\alpha-polyst} / G_{\bf v},
\end{equation*}
where the quotient $\doubleslash$ identifies $S$-equivalent orbits (those whose closures intersect) in the usual way.
\end{definition}

\begin{remark}
It is sometimes more convenient to divide out by the scalar multiples of the identity (which act trivially) and use the projectivisation $P G_{\bf v}$ instead. The quotients $\Rep(Q, {\bf v})^{\alpha-ss} \doubleslash G_{\bf v}$ and $\Rep(Q, {\bf v})^{\alpha-ss} \doubleslash PG_{\bf v}$ have the same underlying topological space, although when computing the equivariant cohomology of $\Rep(Q, {\bf v})^{\alpha-ss}$ with respect to $G_{\bf v}$ one has to remember the extra factor of $\C^*$ that acts trivially.
\end{remark}

When $\alpha = 0$, then the lift of the $G_{\bf v}$ action to $\Rep(Q, {\bf v}) \times \C$ is the trivial one, hence all representations $x \in \Rep(Q, {\bf v})$ are semistable. Therefore, in this case the GIT quotient $\mathcal{M}_0(Q, {\bf v})$ is just the affine quotient $\Rep(Q, {\bf v}) \doubleslash G_{\bf v}$. Every $G_{\bf v}$ orbit in $\Rep(Q, {\bf v})$ has a unique closed orbit in its closure (see \cite[Theorem 4, p19]{Nagata65} and \cite[Sec. 8]{Nagata64}), and the points in the affine quotient correspond to these closed orbits. Therefore there is a well-defined projection map
\begin{equation}\label{eqn:affine-projection}
\pi : \mathcal{M}_\alpha(Q, {\bf v}) \rightarrow \mathcal{M}_0(Q, {\bf v})
\end{equation}
taking an orbit to the unique closed orbit in its closure (where we take the closure in $\Rep(Q, {\bf v})$).

In analogy with holomorphic bundles, one can also define slope-stability of a representation in terms of the degree and rank (cf. \cite{King94}). 

\begin{definition}
A \emph{subrepresentation} of a representation $x \in \Rep(Q, {\bf v})$ consists of vector spaces $\{ V_i' \subseteq V_i \}_{i \in \mathcal{I}}$ such that $x_a (V_{t(a)}') \subseteq V_{h(a)}'$ for all edges $a \in \mathcal{E}$, and homomorphisms $\{ x_a' : V_{t(a)}' \rightarrow V_{h(a)}' \}_{a \in \mathcal{E}}$ such that $x_a'$ is the restriction of $x_a$ to $V_{t(a)}'$ for all $a \in \mathcal{E}$. 
\end{definition}

For a given subrepresentation $x'$ of $x \in \Rep(Q, {\bf v})$, let ${\bf v}' := \left( \dim_\C V_i' \right)_{i \in \mathcal{I}}$ be the associated dimension vector. Then $x' \in \Rep(Q, {\bf v}') \subseteq \Rep(Q, {\bf v})$.  We can now define the degree and rank of a subrepresentation.

\begin{definition}
Let $Q$ be a quiver, $\alpha = ( \alpha_i )_{i \in \mathcal{I}}$ an admissible stability parameter, and ${\bf v}' = (v_i)_{i \in \mathcal{I}} \in \mathbb{Z}_{\geq 0}^\mathcal{I}$ a dimension vector. The \emph{$\alpha$-degree} of $(Q, {\bf v}')$ is
\begin{equation*}
\deg_\alpha(Q, {\bf v}') : = \sum_{i \in \mathcal{I}} \alpha_i v_i ,
\end{equation*}
and the \emph{rank} is
\begin{equation*}
\rank(Q, {\bf v}') := \sum_{i \in \mathcal{I}} v_i .
\end{equation*}
The \emph{$\alpha$-slope} of $(Q, {\bf v}')$ is 
\begin{equation*}
\slope_\alpha(Q, {\bf v}') := \deg_\alpha(Q, {\bf v}') / \rank(Q, {\bf v}') .
\end{equation*}
\end{definition}

\begin{remark}
The stability parameter $\alpha$ is admissible for $\Rep(Q, {\bf v})$ if and only if $\deg_\alpha(Q, {\bf v}) = 0$.
\end{remark}

The following theorem of King then shows that, in analogy with holomorphic bundles, $\alpha$-stability and $\alpha$-semistability have an interpretation in terms of the slopes of subrepresentations.
\begin{proposition}[Proposition 3.1 of \cite{King94}]\label{prop:slope-stability}
Let $Q$ be a quiver, ${\bf v}$ a dimension vector, and $\alpha$ an admissible stability parameter. A representation $x \in \Rep(Q, {\bf v})$ is $\alpha$-stable (resp. $\alpha$-semistable) if and only if every proper non-zero subrepresentation satisfies
\begin{equation*}
\slope_\alpha(Q, {\bf v}') < 0 \quad \text{(respectively, $\slope_\alpha(Q, {\bf v}') \leq 0$)} .
\end{equation*}
\end{proposition}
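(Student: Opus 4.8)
The plan is to deduce the statement from the Hilbert--Mumford numerical criterion applied to the linearisation \eqref{eqn:lifted-action} of the trivial bundle $\Rep(Q,{\bf v})\times\C$; this is in essence King's original argument. Since $G_{\bf v}$ is reductive and acts linearly on $\Rep(Q,{\bf v})\times\C$, $\alpha$-semistability of $A$ in the sense of Definition~\ref{def:stability} is equivalent to the condition that for every one-parameter subgroup $\lambda\colon\C^*\to G_{\bf v}$ for which $\lim_{t\to 0}\lambda(t)\cdot A$ exists, the weight of the induced $\C^*$-action on the fibre over that limit is $\le 0$; $\alpha$-stability corresponds to the same statement with strict inequality for every non-central $\lambda$, together with the computation that the stabiliser of $A$ is $\C^*\cdot\id$.

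First I would classify the relevant one-parameter subgroups. Up to conjugation, $\lambda$ is the datum of $\ZBbb$-gradings $V_i=\bigoplus_n V_i(n)$ with $\lambda_i(t)$ acting as $t^n$ on $V_i(n)$; writing each $A_a$ in blocks with respect to these gradings, $\lambda(t)$ scales the block $V_{t(a)}(n)\to V_{h(a)}(m)$ by $t^{m-n}$, so $\lim_{t\to 0}\lambda(t)\cdot A$ exists precisely when $A$ preserves the descending filtration $V_i^{\ge p}:=\bigoplus_{n\ge p}V_i(n)$, i.e.\ when each $V^{\ge p}$ is a subrepresentation, and in that case the limit is the associated graded representation $\Gr(A)$. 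As $\det\lambda_i(t)=t^{w_i}$ with $w_i=\sum_n n\dim_\C V_i(n)$, the fibre weight is $\sum_i\alpha_i w_i$. Shifting all grading weights by a constant changes $\lambda$ only by a central cocharacter and, by admissibility ($\deg_\alpha(Q,{\bf v})=0$), leaves $\sum_i\alpha_i w_i$ unchanged, so I may assume the weights lie in $\{0,1,\dots,\ell\}$. The telescoping identity $w_i=\sum_{p=1}^\ell\dim_\C V_i^{\ge p}$ then gives the key formula $\sum_i\alpha_i w_i=\sum_{p=1}^\ell\deg_\alpha(Q,{\bf v}^{\ge p})$, exhibiting the Hilbert--Mumford weight as a sum of $\alpha$-degrees of subrepresentations of $A$; note also that $\deg_\alpha\le 0$ is equivalent to $\slope_\alpha\le 0$ since the rank is positive.

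With this formula the semistable case is immediate: if every proper nonzero subrepresentation has $\slope_\alpha\le 0$ then every term in the sum is $\le 0$, so every admissible $\lambda$ has nonpositive fibre weight and $A$ is semistable; conversely a subrepresentation of dimension vector ${\bf v}'$ produces, via the two-step filtration $0\subsetneq{\bf v}'\subsetneq{\bf v}$, a $\lambda$ of fibre weight exactly $\deg_\alpha(Q,{\bf v}')$, so semistability forces $\deg_\alpha(Q,{\bf v}')\le 0$. For the degeneration aspect of the stable case one argues identically with strict inequalities, using that after normalising each $V^{\ge p}$ with $1\le p\le\ell$ is a proper nonzero subrepresentation when $\lambda$ is non-central.

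The part I expect to require the most care is the stabiliser condition in the stable case. For the ``only if'' direction one must show that if every proper nonzero subrepresentation has $\slope_\alpha<0$ then $\Aut(A)=\C^*\cdot\id$: a non-scalar semisimple automorphism commuting with $A$ decomposes $V$ into at least two eigen-subrepresentations, and the sum over any proper nonempty set of eigenvalues is a proper nonzero subrepresentation whose complement is also a proper nonzero subrepresentation, so their $\alpha$-degrees are both negative yet sum to $\deg_\alpha(Q,{\bf v})=0$, a contradiction; passing to the Jordan decomposition reduces the general case to this one or to a non-scalar unipotent $g$, for which $\ker N$ and $\image N$ of the vertex-graded nilpotent $N=g-\id$ form the same kind of complementary pair. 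For the ``if'' direction one must recover closedness of the orbit of $(A,\xi)$ and the stabiliser bound: a non-closed orbit would yield a $\lambda$ whose limit is still semistable, hence of fibre weight $0$, contradicting strict negativity for non-central $\lambda$; and if some proper nonzero subrepresentation had $\alpha$-degree $0$, the associated two-step $\lambda$ would degenerate $A$ to $\Gr(A)$ while fixing $\xi$, so either the orbit is not closed or $\Gr(A)\cong A$, the latter forcing a splitting $V\cong V'\oplus V/V'$ and hence a non-scalar automorphism. Assembling these cases yields the proposition; this is the content of \cite[Prop.~3.1]{King94}.
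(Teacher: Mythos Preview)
The paper does not give its own proof of this proposition; it simply quotes it as Proposition~3.1 of \cite{King94} and uses the result as a black box. Your proposal is a correct reconstruction of King's original Hilbert--Mumford argument: the translation of one-parameter subgroups into $\ZBbb$-gradings, the identification of the fibre weight with $\sum_p \deg_\alpha(Q,{\bf v}^{\ge p})$, the two-step filtration to test individual subrepresentations, and the endomorphism-algebra argument for the stabiliser are all exactly as in \cite{King94}. One cosmetic point: in the unipotent step you call $\ker N$ and $\image N$ a ``complementary pair''; they are of course not complementary as subspaces, but rank--nullity does give that their dimension vectors sum to ${\bf v}$, which is all you use. With that caveat the argument is sound, and there is nothing in the paper to compare it against beyond the citation.
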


When classifying the critical sets of $\| \mu-\alpha\|^2$ in Sections \ref{subsec:critical-sets} and \ref{sec:singular-critical-sets} it is necessary to choose a stability parameter for a given subrepresentation. In general it is not possible to use the same stability parameter $\alpha$, since $\deg_\alpha(Q, {\bf v'})$ may not be zero, and therefore $\alpha$ may not be admissible for $(Q, {\bf v'})$. Instead, the correct definition involves subtracting a scalar multiple of the vector $(1)_{j \in \mathcal{I}}$, where the scalar is chosen so that $(Q, {\bf v'})$ has degree zero with respect to the new parameter.

\begin{definition}\label{def:induced-parameter}
Let $Q$ be a quiver, ${\bf v}$ a dimension vector, and $\alpha = (\alpha_j)_{j \in \mathcal{I}}$ an admissible stability parameter for $(Q, {\bf v})$. Given any dimension vector ${\bf v'} \leq {\bf v}$, the \emph{induced stability parameter} on $(Q, {\bf v'})$ is 
\begin{equation*}
\alpha' = \left( \alpha_j - \slope_\alpha(Q, {\bf v'}) \right)_{j \in \mathcal{I}} .
\end{equation*}

\end{definition}

Note that it is easy to see that the induced stability parameter is admissible on $\Rep(Q, {\bf v'})$, since $\deg_{\alpha'}(Q, {\bf v'}) = \deg_\alpha(Q, {\bf v'}) - \deg_\alpha(Q, {\bf v'}) = 0$. Finally, we show that the operation of taking Hermitian adjoint of a representation preserves stability with respect to a change in parameter from $\alpha$ to $-\alpha$.
\begin{lemma}\label{lem:adjoint-stability}
Let $Q$ be a quiver and let $\bar{Q}$ denote the quiver with the same vertices, but with the direction of all edges reversed. Fix Hermitian structures on the vector spaces $\{ V_k \}_{k \in \mathcal{I}}$. Then $x \in \Rep(Q, {\bf v})$ is $\alpha$-stable (resp. semistable, polystable) if and only if the adjoint $x^* \in \Rep(\bar{Q}, {\bf v})$ is $-\alpha$-stable (resp. semistable, polystable).
\end{lemma}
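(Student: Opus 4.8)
The key observation is that the subrepresentation structure of $A$ and the quotient-representation structure of $A^*$ are linked by orthogonal complements. Concretely, fixing the Hermitian metrics on the $V_k$, a collection of subspaces $\{V_k' \subseteq V_k\}_{k \in \mathcal{I}}$ is $A$-invariant (i.e.\ $A_a(V_{t(a)}') \subseteq V_{h(a)}'$ for all $a \in \mathcal{E}$) if and only if the collection of orthogonal complements $\{(V_k')^\perp \subseteq V_k\}_{k \in \mathcal{I}}$ is $A^*$-invariant, where we regard $A^* \in \Rep(\bar{Q}, {\bf v})$ as having components $A_a^* : V_{h(a)} \to V_{t(a)}$. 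This is just the elementary fact that $B(W') \subseteq W''$ iff $B^*((W'')^\perp) \subseteq (W')^\perp$, applied edge by edge. So I would first record this bijection between $A$-subrepresentations of dimension vector ${\bf v}'$ and $A^*$-subrepresentations (for $\bar{Q}$) of dimension vector ${\bf v} - {\bf v}'$.

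Next I would do the slope bookkeeping. By Proposition \ref{prop:slope-stability}, $A$ is $\alpha$-stable iff every proper nonzero $A$-subrepresentation with dimension vector ${\bf v}'$ has $\slope_\alpha(Q, {\bf v}') < 0$, i.e.\ $\deg_\alpha(Q, {\bf v}') < 0$. Since $\alpha$ is admissible, $\deg_\alpha(Q, {\bf v}) = 0$, so $\deg_\alpha(Q, {\bf v}') = - \deg_\alpha(Q, {\bf v} - {\bf v}')$. Now $\deg_{-\alpha}(\bar{Q}, {\bf v} - {\bf v}') = -\deg_\alpha(\bar{Q}, {\bf v} - {\bf v}') = -\deg_\alpha(Q, {\bf v} - {\bf v}')$ (the degree only depends on the dimension vector and $\alpha$, not on the edge orientation), which equals $\deg_\alpha(Q, {\bf v}')$. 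Hence $\deg_\alpha(Q, {\bf v}') < 0$ for all proper nonzero ${\bf v}'$ iff $\deg_{-\alpha}(\bar{Q}, {\bf v}'') < 0$ for all proper nonzero ${\bf v}''$, where ${\bf v}'' = {\bf v} - {\bf v}'$ ranges over the same set (proper nonzero dimension vectors $\leq {\bf v}$ that occur as subrepresentation dimension vectors, which by the first step correspond exactly). Combining with the invariance bijection, $A$ has no destabilising subrepresentation iff $A^*$ has none, giving the stable case; the semistable case is identical with $\leq$ in place of $<$.

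For the polystable case I would argue that $A$ is $\alpha$-polystable iff $A \cong \bigoplus_j A_j$ with each $A_j$ an $\alpha_j'$-stable representation of the same $\alpha$-slope (induced parameter as in Definition \ref{def:induced-parameter}); taking adjoints commutes with direct sums and, by the stable case already proved together with the slope computation above, each $A_j^*$ is then $(-\alpha)_j'$-stable of the appropriate slope, so $A^* \cong \bigoplus_j A_j^*$ is $-\alpha$-polystable. Alternatively, and perhaps more cleanly, one can use the Hermitian/moment-map characterisation: $A$ is $\alpha$-polystable iff its $G_{\bf v}$-orbit meets $\mu_I^{-1}(\alpha)$ (i.e.\ $A$ is gauge-equivalent to a solution of the appropriate moment map equation), and the moment map for $\bar{Q}$ at $A^*$ differs from that for $Q$ at $A$ by a sign on the relevant components, so the $\alpha$-equation for $A$ becomes the $(-\alpha)$-equation for $A^*$. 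I would pick whichever of these is most consistent with the conventions already set up in Section \ref{sec:smooth-space}.

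The main obstacle is purely one of bookkeeping: making sure the correspondence ${\bf v}' \leftrightarrow {\bf v} - {\bf v}'$ between destabilising dimension vectors is set up so that ``proper nonzero'' is preserved and that the slope signs track correctly through the admissibility relation $\deg_\alpha(Q,{\bf v}) = 0$. There is no deep content — the proof is a short unwinding of definitions once the orthogonal-complement duality between sub- and quotient-representations is stated.
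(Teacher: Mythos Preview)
Your proposal is correct and follows essentially the same approach as the paper: the orthogonal-complement bijection between $A$-subrepresentations and $A^*$-subrepresentations, followed by the slope computation using admissibility, and for polystability the observation that direct sums are preserved under taking adjoints. The paper runs the argument in the direction $A^* \Rightarrow A$ (starting from a destabilising subrepresentation of $A^*$) rather than $A \Rightarrow A^*$, and does not mention the moment-map alternative you offer, but the content is the same.
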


\begin{proof}
Suppose that there is a subrepresentation with dimension vector ${\bf v'}$ preserved by $x^*$ and let $\mu = \slope_{-\alpha}(\bar{Q}, {\bf v'}) = - \slope_\alpha(Q, {\bf v'})$. Then the orthogonal complement of the subrepresentation is preserved by $x$ and so $\slope_\alpha(Q, {\bf v}-{\bf v'}) < 0$ (resp. $\leq 0$) since $x$ is $\alpha$-stable (resp. semistable). Therefore, since $\alpha$ is an admissible stability parameter, then $\slope_\alpha(Q, {\bf v'}) > 0$ (resp. $\geq 0$) and so $\slope_{- \alpha}(\bar{Q}, {\bf v'}) < 0$ (resp. $\leq 0$).  Therefore $x$ is $\alpha$-stable (resp. semistable) if and only if $x^*$ is $-\alpha$-stable (resp. semistable). 

Since $x$ is a direct sum of subrepresentations if and only if the adjoint $x^*$ is also a direct sum, then the above argument shows that $x$ is $\alpha$-polystable iff $x^*$ is $-\alpha$-polystable.
\end{proof}

\subsection{The algebraic stratification}\label{subsec:algebraic-strata}

The Harder-Narasimhan stratification for quivers is defined in analogy with the case of holomorphic bundles (see \cite{AtiyahBott83} and \cite{HarderNarasimhan74} for holomorphic bundles, and \cite[Section 2]{Reineke03} for quivers). The filtration is denoted by the sequence
\begin{equation}\label{eqn:HN-rep-filtration}
0 = x_0 \subset x_1 \subset \cdots \subset x_n = x 
\end{equation}
of subrepresentations such that for each $j=1, \ldots, n$, the quotient $x_j / x_{j-1}$ is the maximal semistable subrepresentation of $x / x_{j-1}$ (where the stability parameter is the one induced on the quotient using Definition \ref{def:induced-parameter}). The associated dimension vectors induce a canonical filtration 
\begin{equation}\label{eqn:HN-filtration}
\{ 0 \} = \Vect(Q, {\bf v_0}) \subset \Vect(Q, {\bf v_1}) \subset \cdots \subset \Vect(Q, {\bf v_n}) = \Vect(Q, {\bf v})
\end{equation}
called the \emph{Harder-Narasimhan filtration}, and the dimension vectors ${\bf v^*} = ({\bf v_1}, {\bf v_2} - {\bf v_1}, \ldots, {\bf v_n} - {\bf v_{n-1}})$ form a vector called the \emph{Harder-Narasimhan type} of the filtration. Note that the inclusion maps in \eqref{eqn:HN-filtration} are induced from the representation $x$, so that the spaces $\Vect(Q, {\bf v_j}) \subseteq \Vect(Q, {\bf v})$ are all $x$-invariant.

\begin{definition}\label{def:HN-length}
The \emph{length} of the Harder-Narasimhan filtration \eqref{eqn:HN-filtration} is equal to $n$, the number of non-trivial terms in the filtration.
\end{definition}

\begin{definition}\label{def:HN-strata}
The \emph{Harder-Narasimhan stratum} with Harder-Narasimhan type ${\bf v^*}$ is
\begin{equation}\label{eqn:HN-strata-def}
B_{\bf v^*} := \left\{ x \in \Rep(Q, {\bf v}) \, : \, \text{$x$ has H-N type ${\bf v^*}$} \right\} \subseteq \Rep(Q, {\bf v}).
\end{equation}
\end{definition}

Since the filtration is canonical then a representation belongs to exactly one Harder-Narasimhan stratum, and so we have a disjoint union
\begin{equation*}
\Rep(Q, {\bf v}) = \bigcup_{\text{HN types }{\bf v^*}}  B_{\bf v^*} .
\end{equation*}

There is a partial ordering on the strata given in \cite[Definition 3.6]{Reineke03} (analogous to that for holomorphic bundles described by Shatz in \cite{Shatz77}), and \cite[Proposition 3.7]{Reineke03} shows that the stratification has good properties in the sense that the closure of each stratum $B_{\bf v^*}$ is contained in the union of all $B_{\bf w^*}$ such that ${\bf w^*} \geq {\bf v^*}$. 

Any semistable representation also has a Jordan-H\"older filtration, given by the following
\begin{definition}
Let $x \in \Rep(Q, {\bf v})^{\alpha-ss}$ be an $\alpha$-semistable representation. A filtration 
\begin{equation*}
\{0\} = \Vect(Q, {\bf v_0}) \subset \Vect(Q, {\bf v_1}) \subset \cdots \subset \Vect(Q, {\bf v_m}) = \Vect(Q, {\bf v})
\end{equation*}
with induced subrepresentations of $x$
\begin{equation*}
0 = x_0 \subset x_1 \subset \cdots \subset x_m = x ,
\end{equation*}
is called a \emph{Jordan-H\"older filtration} if each quotient representation $x_j / x_{j-1}$ is stable with respect to the stability parameter on $\Rep(Q, {\bf v_j}-{\bf v_{j-1}})$ induced by $\alpha$, and each subrepresentation has the same slope.
\end{definition}

In contrast to the Harder-Narasimhan filtration, the Jordan-H\"older filtration is not necessarily unique, but the \emph{graded object}
\begin{equation*}
\Gr^{\JH}(x) = \bigoplus_{j=1}^m x_j / x_{j-1}
\end{equation*}
is unique up to isomorphism. Combining the Harder-Narasimhan filtration with the Jordan-H\"older filtration, for any representation $x \in \Rep(Q, {\bf v})$ we obtain a double filtration called the \emph{Harder-Narasimhan-Jordan-H\"older filtration} (cf. \cite[Sec. 5]{HaradaWilkin11} for quivers and \cite{DaskalWentworth04} for holomorphic bundles). Again, this is not necessarily unique, but the graded object $\Gr^{\HNJH}(x)$ is unique up to isomorphism.

\subsection{The symplectic quotient}\label{subsec:symplectic}

Another theorem of King (\cite[Theorem 6.1]{King94}) identifies the GIT quotient of $\Rep(Q, {\bf v})$ with the symplectic quotient. Since this equivalence is central to this paper, then we recall the details here.

Let $Q$ be a quiver with dimension vector ${\bf v} = ( v_i )_{i \in \mathcal{I}}$, and fix a Hermitian structure on the vector spaces $V_i \cong \C^{v_i}$. There is an associated symplectic structure on $\Rep(Q, {\bf v})$, defined as follows. Given tangent vectors $\delta x_1, \delta x_2 \in T_x \Rep(Q, {\bf v}) \cong \Rep(Q, {\bf v})$, define the metric
\begin{equation}\label{eqn:metric}
g(\delta x_1, \delta x_2) := \sum_{a \in \mathcal{E}} \Re \tr \left( (\delta x_1)_a (\delta x_2)_a^* \right) ,
\end{equation}
and symplectic structure
\begin{equation}\label{eqn:symplectic-structure}
\omega(\delta x_1, \delta x_2) := \sum_{a \in \mathcal{E}} \Im \tr \left( (\delta x_1)_a (\delta x_2)_a^* \right) .
\end{equation}
Note that $\omega(\delta x_1, \delta x_2) = g(i \delta x_1, \delta x_2)$, in other words the complex structure $I = i \cdot \id$ is compatible with the metric. With this complex structure and metric, the space $\Rep(Q, {\bf v})$ has the structure of a K\"ahler manifold.

With respect to the Hermitian structure on each $V_i$, one can define the unitary group $\U(V_i) \subset \GL(V_i, \C)$, and therefore the compact subgroup
\begin{equation*}
K_{\bf v} := \prod_{i \in \mathcal{I}} \U(V_i) \subset G_{\bf v}.
\end{equation*}

The induced action of $K_{\bf v}$ on $\Rep(Q, {\bf v})$ is given by
\begin{equation*}
(g_j)_{j \in \mathcal{I}} \cdot (x_a)_{a \in \mathcal{E}} = \left( g_{h(a)} x_a g_{t(a)}^{-1} \right)_{a \in \mathcal{E}} ,
\end{equation*}
and the infinitesimal action of the Lie algebra $\mathfrak{k}_{\bf v}$, denoted $\rho_x : \mathfrak{k}_{\bf v} \rightarrow T_x \Rep(Q, {\bf v}) \cong \Rep(Q, {\bf v})$, is given by
\begin{equation}\label{eqn:inf-action}
\rho_x(u) := \left. \frac{d}{dt} \right|_{t = 0} e^{tu} \cdot x = \bigoplus_{a \in \mathcal{E}} \left( u_{h(a)} x_a - x_a u_{t(a)}  \right) .
\end{equation}

This action is \emph{Hamiltonian}, i.e. it preserves the symplectic structure and has an associated moment map
\begin{align}\label{eqn:moment-map-def}
\begin{split}
\mu : \Rep(Q, {\bf v}) & \rightarrow \mathfrak{k}_{\bf v}^* \\
 (x_a)_{a \in \mathcal{E}} & \mapsto \frac{1}{2i} \sum_{a \in E} [x_a, x_a^*] 
\end{split}
\end{align}
that satisfies $d \mu (\delta x) \cdot u = \omega(\rho_x(u), \delta x)$ for all $\delta x \in \Rep(Q, {\bf v}) \cong T_x \Rep(Q, {\bf v})$, and $u \in \mathfrak{k}_{\bf v}$. In the above definition the commutator $[x_a, x_a^*]$ is defined via the inclusion $\Hom(V_{t(a)}, V_{h(a)}) \hookrightarrow \End \left( \Vect(Q, {\bf v}) \right)$. In the following we fix an $\Ad$-invariant inner product on $\mathfrak{k}_{\bf v}$ and use this to identify $\mathfrak{k}_{\bf v} \cong \mathfrak{k}_{\bf v}^*$, so that we can consider $\mu$ as a map into $\mathfrak{k}_{\bf v}$. 

Note also that the above definition implies that $\tr \mu(x) = 0$, since $\mu(x)$ is constructed from commutators. Therefore, for the symplectic quotient to make sense, we need the following definition.

\begin{definition}
Let $Q$ be a quiver, and ${\bf v} = (v_j)_{j \in \mathcal{I}} \in \mathbb{Z}_{\geq 0}^\mathcal{I}$ a dimension vector. The central element $\alpha = (i \alpha_j \cdot \id_{V_j})_{j \in \mathcal{I}} \in Z(\mathfrak{k}^*)$ is an \emph{admissible central element} if $\displaystyle{\sum_{j \in \mathcal{I}} \alpha_j v_j = 0}$. 
\end{definition}

The \emph{symplectic quotient} with respect to an admissible central element $\alpha$ is
\begin{equation*}
\mathcal{M}_\alpha(Q, {\bf v}) := \mu^{-1}(\alpha) / K_{\bf v} .
\end{equation*}

\begin{remark}

\begin{enumerate}

\item The parameter $\alpha$ is admissible if and only if $\alpha$ is a central element of the dual of the Lie algebra of $PK$. 

\item Given an admissible stability parameter $(\alpha_j)_{j \in \mathcal{I}} \in \mathbb{Z}^{\mathcal{I}}$ one can construct an admissible central element $(i \alpha_j \cdot \id_{V_j})_{j \in \mathcal{I}} \in Z(\mathfrak{k}^*)$ and vice-versa. In the rest of the paper both of these will be denoted $\alpha$, and the meaning will be clear from the context.
\end{enumerate}
\end{remark}

A result of King from \cite{King94} shows that the GIT quotient and the symplectic quotient are bijective and that there is a continuous map $\mu^{-1}(\alpha) / K_{\bf v} \rightarrow \Rep(Q, {\bf v})^{\alpha-ss} \doubleslash G_{\bf v}$. Using the gradient flow of $\| \mu - \alpha \|^2$, Hoskins has shown in \cite[Theorem 4.2]{Hoskins14} that the inverse of this map is also continuous.

\begin{proposition}\label{prop:King-Kempf-Ness}
Let $Q$ be a quiver, ${\bf v}$ a dimension vector, and $\alpha$ an admissible stability parameter. Then the GIT quotient $\Rep(Q, {\bf v})^{\alpha-ss} \doubleslash G_{\bf v}$ is homeomorphic to the symplectic quotient $\mu^{-1}(\alpha) / K_{\bf v}$.
\end{proposition}

\subsection{Critical points of $\| \mu - \alpha \|^2$ and the gradient flow} \label{subsec:critical-sets}

Recall from \cite[Sec. 3.2]{HaradaWilkin11} that a representation $x \in \Rep(Q, {\bf v})$ is a critical point for $\| \mu - \alpha \|^2$ if and only if the infinitesimal action of $K_{\bf v}$ satisfies
\begin{equation}\label{eqn:critical-rep}
\rho_x (\mu(x)-\alpha) = 0   \quad \text{for all $a \in \mathcal{E}$} .
\end{equation}
More explicitly, this is equivalent to the condition that
\begin{equation}
\quad x_a (\mu(x)-\alpha)_{t(a)} - (\mu(x)-\alpha)_{h(a)} x_a = 0 \quad \text{for all $a \in \mathcal{E}$} .
\end{equation}

This equation implies that the representation $x$ splits into subrepresentations, each of which corresponds to an eigenspace of $i(\mu(x)-\alpha)$ (the factor of $i$ is used so that the eigenvalues are real; see \eqref{eqn:critical-eigenvalues} below). In other words, if $\lambda_1, \ldots, \lambda_n$ are the eigenvalues of $i(\mu(x)-\alpha)$, then for each eigenvalue $\lambda_j$ there exists a dimension vector ${\bf v_j}$ such that ${\bf v_1} + \cdots + {\bf v_n} = {\bf v}$, and 
\begin{equation}\label{eqn:critical-splitting}
x = \bigoplus_{j=1}^n x_j, \quad \Vect(Q, {\bf v}) \cong \bigoplus_{j=1}^n \Vect(Q, {\bf v_j})
\end{equation}
where $x_j \in \Rep(Q, {\bf v_j})$ for each $j$. Since $\mu(x)$ is constructed from commutators, then $\tr \mu(x) = 0$ on each subrepresentation, and therefore taking the trace of $i(\mu(x) - \alpha)$ shows that
\begin{equation}\label{eqn:critical-eigenvalues}
\lambda_j = \slope_\alpha(Q, {\bf v_j}) .
\end{equation}
Moreover, restricting to a subrepresentation with dimension vector ${\bf v_j}$ induces a new stability parameter $\alpha_j$ on $\Rep(Q, {\bf v_j)}$ (see Definition \ref{def:induced-parameter}), and a direct sum of representations such as that described in \eqref{eqn:critical-splitting} is critical if and only if each $x_j$ is a minimum for $\| \mu - \alpha \|^2$ on $\Rep(Q, {\bf v_j})$. See \cite[Proposition 1]{HaradaWilkin11} for more details.

For each critical set there is a corresponding decomposition ${\bf v} = {\bf v_1} + \cdots + {\bf v_n}$. The \emph{critical type} of a critical point is the vector ${\bf v^*} = \left( {\bf v_1}, \ldots, {\bf v_n} \right)$, where the dimension vectors are ordered by decreasing slope, i.e. $\slope_\alpha(Q, {\bf v_i}) > \slope_\alpha(Q, {\bf v_j})$ if and only if $i < j$. The set of all critical points with critical type ${\bf v^*}$ is denoted $C_{\bf v^*}$. 

In terms of the infinitesimal action of $G_{\bf v}$ on $\Rep(Q, {\bf v})$, the gradient of $f(x) = \| \mu(x) - \alpha \|^2$ has the form
\begin{equation}\label{eqn:grad-def}
\grad f(x) = I \rho_x(\mu(x) - \alpha)
\end{equation}

\begin{definition}
Let $\phi(x_0, t) \in \Rep(Q, {\bf v})$ denote the solution to the downwards gradient flow equation
\begin{equation}\label{eqn:neg-grad-flow}
\frac{d}{dt} \phi(x_0, t) = - I \rho_{\phi(x_0, t)}(\mu(\phi(x_0, t))) - \alpha)
\end{equation}
with initial condition $\phi(x_0, 0) := x_0 \in \Rep(Q, {\bf v})$.
\end{definition}
The $K_{\bf v}$-equivariance of a solution to \eqref{eqn:neg-grad-flow} follows immediately from the $K_{\bf v}$-equivariance of the moment map. In the same way as for moment map flows on K\"ahler manifolds studied by Kirwan in \cite{Kirwan84}, the flow is generated by the action of $G_{\bf v}$. Given $x_0 \in \Rep(Q, {\bf v})$, let $g(t)$ be the solution of
\begin{equation}\label{eqn:group-neg-flow}
\frac{dg}{dt} g(t)^{-1} = -i (\mu(g(t) \cdot x_0) - \alpha), \quad g(0) = \id
\end{equation}
A calculation then shows that $\phi(x_0, t) = g(t) \cdot x_0$ satisfies \eqref{eqn:neg-grad-flow}. 

It follows from Sjamaar's compactness result in \cite[Lem. 4.10]{Sjamaar98} and the Lojasiewicz inequality technique of Simon in \cite{Simon83} that the flow $\phi(x_0, t)$ exists for all time $t \geq 0$ and converges to a unique limit $\displaystyle{x_\infty = \lim_{t \rightarrow \infty} \phi(x_0, t)}$. The main theorem of \cite{HaradaWilkin11} gives an algebraic description of the limit of the downwards gradient flow of $\| \mu - \alpha \|^2$ (see also \cite{Hoskins14}) and the main result of this paper is to give conditions on $x_0$ such that $\lim_{t \rightarrow - \infty} \phi(x_0, t)$ exists. This will be used in Section \ref{subsec:flow-classification} to characterise the pairs of critical points connected by a flow line.

\begin{theorem}\label{thm:algebraic-flow-limit}
Let $Q$ be a quiver and $\alpha$ a stability parameter for $Q$. Given a dimension vector ${\bf v}$ for $Q$, let $x \in \Rep(Q, {\bf v})$. Then
\begin{enumerate}

\item (\cite[Theorem 8, p336]{HaradaWilkin11}) The limit $x_\infty := \lim_{t \rightarrow \infty} \phi(x_0, t)$ is isomorphic to the graded object of the Harder-Narasimhan-Jordan-H\"older double filtration of $x$.

\item (\cite[Proposition 2, p320]{HaradaWilkin11}) The gradient flow defines a continuous $K_{\bf v}$-equivariant deformation retract of each Harder-Narasimhan stratum $B_{\bf v^*}$ onto the associated critical set $C_{\bf v^*}$.

\end{enumerate}
\end{theorem}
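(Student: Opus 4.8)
The plan is to assemble the statement from the results of \cite{HaradaWilkin11} (generalised in \cite{hoskins}), together with the standard analytic machinery for the norm-square of a moment map. Write $f := \|\mu - \alpha\|^2$; since $\mu$ is polynomial, $f$ is real-analytic on the affine space $\Rep(Q, {\bf v})$. For part (1), I would first show that the trajectory $\gamma^-(A_0, t)$ cannot escape to infinity in finite time: following Sjamaar \cite{Sjamaar98}, the image of the trajectory is contained in a compact subset of $\Rep(Q, {\bf v})$, so the flow exists for all $t \geq 0$. Then the Lojasiewicz gradient inequality for the real-analytic $f$, applied near the limit set in the manner of Simon \cite{Simon83}, shows that the trajectory has finite length, hence converges to a single limit $A_\infty$; since $f$ is non-increasing and $\|\grad f\| \to 0$ along the flow, $A_\infty$ is a critical point of $f$.

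For part (2), I would restrict to a single Harder-Narasimhan stratum $B_{\bf v^*}$; since the HN filtration is canonical, the flow preserves $B_{\bf v^*}$, so it suffices to identify $A_\infty$ with $\Gr^{\HNJH}(A)$ there. This is \cite[Theorem 8]{HaradaWilkin11}, proved by induction on the length of the HNJH double filtration. The inductive step compares the trajectory through $A$ with the trajectory through $\Gr^{\HNJH}(A)$: writing $A$ in block form with respect to the filtration \eqref{eqn:HN-filtration}, one shows that the off-diagonal ``extension class'' blocks decay along the flow, while the diagonal blocks evolve like the gradient flow on the smaller spaces $\Rep(Q, {\bf v_j} - {\bf v_{j-1}})$, to which the inductive hypothesis applies; the splitting and minimality criteria recalled after \eqref{eqn:critical-eigenvalues} (see \cite[Proposition 1]{HaradaWilkin11}) then identify the limit precisely.

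For part (3), $K_{\bf v}$-equivariance of $A_0 \mapsto A_\infty$ is immediate, since the metric \eqref{eqn:metric} and $f$ are $K_{\bf v}$-invariant and hence the flow commutes with the $K_{\bf v}$-action. Continuity of the limit map on $B_{\bf v^*}$ follows from continuous dependence of solutions of the gradient ODE on initial data, together with the fact, again supplied by the Lojasiewicz inequality, that $\gamma^-(A_0, t) \to A_\infty$ uniformly for $A_0$ in compact subsets of the stratum --- the quiver analogue of the Yang-Mills arguments of \cite{Daskal92} and \cite{DaskalWentworth04}. That this map is a deformation retraction onto $C_{\bf v^*}$ is then formal: trajectories starting in $B_{\bf v^*}$ converge into $C_{\bf v^*}$ by (1)--(2), and each critical point is a fixed point of the flow, so is sent to itself. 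This is \cite[Proposition 2]{HaradaWilkin11}.

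The main obstacle is part (2): proving the decay of the off-diagonal blocks and verifying that the diagonal blocks really do flow to the graded pieces, uniformly enough to pin down the isomorphism class of the limit. Controlling the interaction between the algebraic HNJH filtration and the analytic gradient flow --- in particular ruling out that the extension classes survive in the limit --- is the delicate heart of \cite[Theorem 8]{HaradaWilkin11}; parts (1) and (3) are comparatively routine consequences of real-analyticity and the Lojasiewicz inequality.
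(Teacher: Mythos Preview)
Your proposal is correct and matches the paper's treatment: the paper does not prove this theorem in-line but simply attributes part (1) to Sjamaar's compactness result \cite{Sjamaar98} combined with Simon's Lojasiewicz technique \cite{Simon83}, and parts (2) and (3) to \cite[Theorem 8]{HaradaWilkin11} and \cite[Proposition 2]{HaradaWilkin11} respectively. Your sketch of the content of those references --- the block-decay argument for the HNJH filtration in part (2), and the Lojasiewicz-based uniformity for continuity of the retraction in part (3) --- is an accurate summary of how those cited results are proved.
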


The remaining results of this section are related to the flow on $G_{\bf v} / K_{\bf v}$ induced by \eqref{eqn:group-neg-flow}. Let $H(n)$ denote the space of $n \times n$ Hermitian matrices and let $H(n)^+$ denote the subset of positive definite Hermitian matrices. Given a quiver $Q$ with set of vertices $\mathcal{I}$ and dimension vector ${\bf v} = (v_i)_{i \in \mathcal{I}}$, define $H_{\bf v} := \times_{i \in \mathcal{I}} H(v_i)$ and $H_{\bf v}^+ := \times_{i \in \mathcal{I}} H(v_i)^+$. Recall from \cite[Sec. VI.1]{Kobayashi87} that there is an identification $H_{\bf v}^+ \cong G_{\bf v} / K_{\bf v}$. Given two metrics $h_1, h_2 \in H_{\bf v}^+$, let $\lambda_{i, \ell}$ denote the eigenvalues of $h_1^{-1} h_2$ at vertex $i \in \mathcal{I}$ for $\ell = 1, \ldots, v_i$. Then the geodesic distance between $h_1$ and $h_2$ is (cf. \cite[Sec. VI.1]{Kobayashi87})
\begin{equation}\label{eqn:metric-distance}
d(h_1, h_2) = \sum_{i \in \mathcal{I}} \sum_{\ell = 1}^{v_i} \left( (\log \lambda_{i, \ell})^2 \right)^{\frac{1}{2}} .
\end{equation}

Define $\sigma : H_{\bf v}^+ \cong G_{\bf v} / K_{\bf v} \rightarrow \R_{\geq 0}$ by
\begin{equation}\label{eqn:def-sigma}
\sigma(h) = \tr(h) + \tr(h^{-1}) - 2 \rank(Q, {\bf v}) .
\end{equation}
In the sequel we will use the following two properties of $\sigma$

\begin{enumerate}

\item $\sigma(h) = 0$ if and only if $h = \id$, and

\item $\lim_{t \rightarrow \infty} \sigma(h_t h_\infty^{-1}) = 0$ if and only if $h_t \rightarrow h_\infty$ with respect to the metric \eqref{eqn:metric-distance}.

\end{enumerate}

The function $\sigma$ is more convenient than the distance measure on $G_{\bf v} / K_{\bf v}$ given by \eqref{eqn:metric-distance} since it has better properties with respect to the gradient flow \eqref{eqn:group-neg-flow}. This was first observed by Donaldson \cite{Donaldson85} in the context of the Yang-Mills flow on K\"ahler manifolds. Given any $x \in \Rep(Q, {\bf v})$ and any $g \in G_{\bf v}$, define $h = g^* g$ and 
\begin{equation}
\mu_h(x) = \Ad_{g^{-1}} \mu(g \cdot x)
\end{equation}
Note that the $K_{\bf v}$-equivariance of the moment map implies that $\mu_h(x)$ depends only on $h \in G_{\bf v} / K_{\bf v}$. Then the results of \cite[Sec. 3.3]{HaradaWilkin11} show that
\begin{align}\label{eqn:mu-h-inequality}
\begin{split}
- 2i \tr \left( (\mu_h(x) - \mu(x)) h \right) & \leq 0 \\
2i \tr \left( h^{-1} (\mu_h(x) - \mu(x)) \right) & \leq 0 
\end{split}
\end{align}
Let $x \in \Rep(Q, {\bf v})$ and $g_0 \in G_{\bf v}$. Then the flow \eqref{eqn:group-neg-flow} with initial conditions $x$ and $g_0 \cdot x$ has respective solutions $g_1(t)$ and $g_2(t)$. Define $g_t : = g_2(t) g_0 g_1(t)^{-1}$ and $h_t = g_t^* g_t$.  In analogy with the Yang-Mills flow studied in \cite{Donaldson85}, the result of \cite[Thm. 5]{HaradaWilkin11} shows that
\begin{equation}\label{eqn:distance-decreasing}
\frac{d}{dt} \sigma(h_t) \leq 0 .
\end{equation}

As explained in the proof of Corollary 15 in \cite{Donaldson85}, the geodesic distance $d$ in the homogeneous space $\GL(n, \C) / \U(n)$ compares uniformly with $\sigma$, in the sense that there exist continuous monotone functions $f, F : \R_{\geq 0} \rightarrow \R_{\geq 0}$ with $f(0) = F(0) = 0$ such that $d(\id, h) \leq F(\sigma(h))$ and $\sigma(h) \leq f(d(\id, h))$. In Section \ref{subsec:scattering} we need a more precise estimate, which is contained in the following lemma.

\begin{lemma}\label{lem:mu-h-Lipschitz}
Given $x \in \Rep(Q, {\bf v})$ and a bounded neighbourhood $U$ of $x$ there exists a bounded neighbourhood $V$ of the identity in $G_{\bf v} / K_{\bf v}$ and a positive constant $C$ such that
\begin{equation}\label{eqn:mu-h-Lipschitz}
\| \mu_h(y) - \mu(y) \| \leq C \sqrt{\sigma(h)}
\end{equation}
for all $y \in U$ and $h \in V$.
\end{lemma}

\begin{proof}
Since $\mu_h(x) : \Rep(Q, {\bf v}) \times G_{\bf v} / K_{\bf v} \rightarrow \mathfrak{g}_{\bf v}$ is a smooth function and the derivative with respect to $h$ is uniformly bounded in a bounded neighbourhood $U$ of $x$, then $\mu_h$ is uniformly Lipschitz in $h$ on $U$. Therefore $\| \mu_h(y) - \mu(y) \| \leq C' d(\id, h)$ for some constant $C'$, where $d$ denotes the geodesic distance in the homogeneous space $G_{\bf v} / K_{\bf v}$. Let $\{ \nu_k \} \subset \R_{>0}$ be the eigenvalues of $h$. Then the calculation in \cite[Ch. VI.1]{Kobayashi87} shows that
\begin{equation*}
d(\id, h) = \left( \sum_k (\log \nu_k)^2 \right)^{\frac{1}{2}} .
\end{equation*}
There is a neighbourhood $V$ of $\id$ in $G_{\bf v} / K_{\bf v}$ and a constant $K < \frac{1}{2}$ such that for all metrics $h$ in $V$ the eigenvalues satisfy $| \nu_k - 1 | \leq K$ for all $k$, and so there is a constant $C$ such that $| \log \nu_k | \leq C | \nu_k - 1 |$ for all $k$. Therefore there is a constant $C_1$ such that
\begin{equation}\label{eqn:distance-sigma1}
d(\id, h) \leq \sum_k  \left| \log \nu_k \right|  \leq \sum_k \left| \nu_k - 1 \right| \leq C_1 \sqrt{ \tr((h-\id)^2)}
\end{equation}
We also have 
\begin{equation}\label{eqn:distance-sigma2}
\sigma(h) = \tr(h + h^{-1} - 2 \id) = \tr\left( h^{-1} (h-\id)^2 \right) \geq C_ 2\tr ((h-\id)^2)
\end{equation}
for some constant $C_2$ since $h^{-1}$ is positive and bounded below in the given neighbourhood $V$ of the identity. Combining all of these estimates gives us a positive constant $C = \frac{C' C_1}{\sqrt{C_2}}$ such that
\begin{equation*}
\| \mu_h(y) - \mu(y) \| \leq C' d(\id, h) \leq C' C_1 \sqrt{\tr((h-\id)^2)} \leq C \sqrt{\sigma(h)}
\end{equation*}
for all $h \in V$.
\end{proof}

\subsection{The eigenspaces of the Hessian at a critical point}\label{subsec:hessian}

Recall from \eqref{eqn:critical-rep} that the critical point equation for $\| \mu - \alpha \|^2$ on $\Rep(Q, {\bf v})$ is
\begin{equation*}
I \rho_x(\mu(x) - \alpha) = 0 \quad \Leftrightarrow \quad \left[ (\mu(x) - \alpha), x_a \right] = 0 \quad \text{for all $a \in \mathcal{E}$} ,
\end{equation*}
and recall from \eqref{eqn:critical-splitting} that any representation satisfying this equation must split into the direct sum of subrepresentations
\begin{equation}\label{eqn:critical-decomposition}
x = x_1 \oplus \cdots \oplus x_n ,
\end{equation}
where each $x_j \in \Rep(Q, {\bf v_j})$, and ${\bf v_1} + \cdots + {\bf v_n} = {\bf v}$. In addition, each $x_j$ minimises the function $\| \mu - \alpha_j \|^2$ on $\Rep(Q, {\bf v_j})$, where $\alpha_j$ is the stability parameter on $\Rep(Q, {\bf v_j})$ induced from $\alpha$.

Also recall from \eqref{eqn:critical-eigenvalues} that $i (\mu(x_j) - \alpha) = \slope_\alpha(Q, {\bf v_j}) \cdot \id$ for each $j=1, \ldots, n$, and so $i( \mu(x) - \alpha)$ has the block-diagonal form
\begin{equation}\label{eqn:mom-map-block-diagonal}
i (\mu(x) - \alpha) = \left( \begin{matrix} \lambda_1 \cdot \id & 0 & 0 & \cdots & 0 \\ 0 & \lambda_2 \cdot \id & 0 & \cdots & 0 \\ 0 & 0 & \lambda_3 \cdot \id & \cdots & 0 \\ \vdots & \vdots & \vdots & \ddots & \vdots \\ 0 & 0 & 0 & \cdots & \lambda_n \cdot \id \end{matrix} \right) ,
\end{equation}
where $\lambda_j = \slope_\alpha(Q, {\bf v_j})$ for each $j = 1, \ldots, n$. The eigenvalues in \eqref{eqn:mom-map-block-diagonal} are ordered so that $\lambda_1 < \lambda_2 < \cdots < \lambda_n$ (i.e. the slope increases with $j$). 

\begin{definition}
The derivative of the infinitesimal action is
\begin{align}\label{eqn:deriv-inf-action}
\begin{split}
\delta \rho_x : \mathfrak{k} \times T_x \Rep(Q, {\bf v}) & \rightarrow T_x \Rep(Q, {\bf v}) \\
(u, X) & \mapsto \left. \frac{d}{dt} \right|_{t=0} \rho_{x+tX} (u) .
\end{split}
\end{align}
\end{definition}

An explicit formula for $\delta \rho$ is 
\begin{equation}\label{eqn:explicit-deriv-inf-action}
\delta \rho_x(u)(X) = \sum_{a \in \mathcal{E}} [u, X_a] .
\end{equation}

\begin{remark}
\begin{enumerate}
\item Note that the tangent bundle of $\Rep(Q, {\bf v})$ is trivial, and therefore we can use the trivial connection on $T \Rep(Q, {\bf v})$ in the above definition.

\item From the definition of the complex structure $I$ in \eqref{eqn:symplectic-structure} we have $\delta \rho_x(u)(IX) = I \delta \rho_x(u)(X)$.
\end{enumerate}
\end{remark}

\begin{lemma}
At a critical point $x \in \Rep(Q, {\bf v})$, the Hessian $H_f : T_x \Rep(Q, {\bf v}) \rightarrow T_x \Rep(Q, {\bf v})$ has the form
\begin{equation*}
H_f(\delta x) = -I \rho_x \rho_x^* I \delta x + I \delta \rho_x (\mu(x)) (\delta x) .
\end{equation*}
\end{lemma}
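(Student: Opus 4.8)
The plan is to compute the Hessian of $f = \frac{1}{2}\|\mu - \alpha\|^2$ at a critical point $A$ by differentiating the gradient vector field twice, working entirely on the flat Kähler manifold $\Rep(Q,{\bf v})$ so that covariant derivatives are ordinary derivatives. First I would recall that the gradient of $f$ with respect to the metric $g$ in \eqref{eqn:metric} is $\grad f = I \rho_A(\mu(A) - \alpha)$; this is the standard formula for the norm-square of a moment map, following from the defining property $d\mu(\delta A)\cdot u = \omega(\rho_A(u), \delta A)$ together with $\omega(\cdot,\cdot) = g(I\cdot,\cdot)$ and the fact that $\mu - \alpha$ is $\mathfrak{k}$-valued so that pairing $d\mu$ against $\mu - \alpha$ reproduces $\rho_A^*$ on the first slot. (Here one uses that $\alpha$ is central, so $d\alpha = 0$ along the flow.)

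Next I would differentiate $\grad f$ in the direction $\delta A \in T_A\Rep(Q,{\bf v})$, treating the two occurrences of $A$ in $I\rho_A(\mu(A)-\alpha)$ separately by the Leibniz rule. Differentiating the $\mu(A)$ factor gives $I\rho_A(d\mu(\delta A))$; since $d\mu(\delta A)\cdot u = \omega(\rho_A(u),\delta A) = g(\rho_A(u), I\delta A) = g(u, \rho_A^* I \delta A)$, we can identify $d\mu(\delta A) = \rho_A^* I \delta A$ as an element of $\mathfrak{k}$ (via the metric), so this term becomes $I\rho_A\rho_A^* I\delta A$. Differentiating the $\rho_A$ factor gives $I \,\delta\rho_A(\mu(A) - \alpha)(\delta A)$, using the definition \eqref{eqn:deriv-inf-action} of $\delta\rho$. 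At a general point the Hessian would be the sum of these, but at a \emph{critical} point $A$ we have $\grad f(A) = 0$, so the term coming from differentiating the leading $I$ (or, equivalently, the Christoffel-symbol correction) vanishes — and in any case the manifold is flat with trivial tangent bundle, as noted in the Remark after \eqref{eqn:explicit-deriv-inf-action}, so there is no correction term at all. Thus
\[
H_f(\delta A) = I\rho_A\rho_A^* I\delta A + I\,\delta\rho_A(\mu(A)-\alpha)(\delta A).
\]
To match the stated formula I would then observe that $\delta\rho_A(\alpha)(\delta A) = 0$: indeed $\alpha$ is a central element, $\delta\rho_A(u)(X) = \bigoplus_a [u_{h(a)}, X_a] - [\dots]$ more precisely $\sum_a (u_{h(a)}X_a - X_a u_{t(a)})$ differentiated, and since each $\alpha_j$ acts as a scalar on $V_j$ it commutes with everything, so $\delta\rho_A(\alpha)(\delta A) = 0$ and the second term reduces to $I\,\delta\rho_A(\mu(A))(\delta A)$. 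Finally, tracking the sign: the displayed formula in the Lemma has $-I\rho_A\rho_A^* I\delta A$, which means their $f$ is normalized with the opposite sign or, more likely, $\rho_A^*$ here denotes the adjoint of the \emph{complexified} infinitesimal action or carries a sign from the $\frac{1}{2i}$ in \eqref{eqn:moment-map-def}; I would reconcile this by carefully inserting the constant from the moment map normalization, noting that $H_f$ is self-adjoint and that $-I\rho_A\rho_A^* I$ is a positive operator, consistent with it being (minus) the quadratic form whose negative eigenspaces detect instability.

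The main obstacle I anticipate is bookkeeping of signs and the factor $\frac{1}{2i}$: getting the precise relation between $d\mu(\delta A)$, the adjoint $\rho_A^*$, and the complex structure $I$ right, since $\mu$ takes values in $\mathfrak{k}^*$ and one must consistently use the metric to pass between $\mathfrak{k}$ and $\mathfrak{k}^*$ and between $\rho_A$ and $\rho_A^*$; a secondary point is justifying that no Christoffel term appears, which is immediate from flatness but worth stating. The representation-theoretic content — that $\delta\rho_A(\alpha) = 0$ because $\alpha$ is central — is routine given \eqref{eqn:explicit-deriv-inf-action}.
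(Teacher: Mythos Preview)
Your approach is exactly the paper's: write $\grad f = I\rho_A(\mu-\alpha)$ and differentiate in the flat metric. Two concrete errors, though.

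First, the sign. With the paper's convention $\omega(\cdot,\cdot)=g(I\cdot,\cdot)$ you have $\omega(\rho_A(u),\delta A)=g(I\rho_A(u),\delta A)$, not $g(\rho_A(u),I\delta A)$; since $I$ is $g$-skew these differ by a sign, so $d\mu(\delta A)=-\rho_A^*I\,\delta A$ (this is exactly the paper's \eqref{eqn:rho-I-adjoint}). That minus sign then propagates to $-I\rho_A\rho_A^*I\,\delta A$ without any appeal to the $\tfrac{1}{2i}$ normalization or to a complexified adjoint.

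Second, your claim that $\delta\rho_A(\alpha)(\delta A)=0$ is false. From \eqref{eqn:explicit-deriv-inf-action}, $\delta\rho_A(\alpha)(X)_a=\alpha_{h(a)}X_a-X_a\alpha_{t(a)}=i(\alpha_{h(a)}-\alpha_{t(a)})X_a$, and there is no reason the scalars $\alpha_{h(a)}$ and $\alpha_{t(a)}$ agree across an edge; ``central in $\mathfrak{k}_{\bf v}$'' does not mean ``acts trivially on $\Rep(Q,{\bf v})$''. In fact the paper's own proof stops at $I\delta\rho_A(\mu-\alpha)(X)-I\rho_A\rho_A^*IX$, and every subsequent use (equations \eqref{eqn:Hessian-compact-action}, \eqref{eqn:Hessian-imaginary-action}, Proposition \ref{prop:neg-eigenspace}) keeps the $\mu-\alpha$; the $\mu(A)$ in the displayed lemma statement is a typo, not something you need to argue toward.
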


\begin{proof}
Recall that the gradient of $f = \frac{1}{2} \| \mu - \alpha \|^2$ at a representation $x \in \Rep(Q, {\bf v})$ is given by
\begin{equation*}
\grad f (x) = I \rho_x(\mu - \alpha) .
\end{equation*}

Differentiating the gradient of $f$ at $x \in \Rep(Q, {\bf v})$ in the direction of a tangent vector $X \in T_x \Rep(Q, {\bf v})$ gives us
\begin{align}\label{eqn:Hessian}
\begin{split}
H_f(x)(X) = \nabla_X \grad f(x) & = I \delta \rho_x(\mu(x)-\alpha)(X) + I \rho_x d\mu(X) \\
 & = I \delta \rho_x(\mu(x)-\alpha)(X) -  I \rho_x \rho_x^* I X , 
\end{split}
\end{align}
where again we use the trivial connection on the tangent bundle of $\Rep(Q, {\bf v})$.
\end{proof}

The next two lemmas contain some identities that will be useful in characterising the negative eigenspace of the Hessian. In order to be completely clear about the sign conventions then all the details are included.

\begin{lemma}
For any $x \in \Rep(Q, {\bf v})$ and $X \in T_x \Rep(Q, {\bf v})$ we have
\begin{equation}\label{eqn:rho-I-adjoint}
d\mu_x(X) = - \rho_x^* I X,
\end{equation}
where we use the inner product on $\mathfrak{k}_{\bf v}$ to identify $\mathfrak{k}_{\bf v} \cong \mathfrak{k}_{\bf v}^*$. For any $u \in \mathfrak{k}_{\bf v}$ we also have
\begin{equation}\label{eqn:rho-I-rho}
\rho_x^* I \rho_x (u) = [\mu(x), u] .
\end{equation}
\end{lemma}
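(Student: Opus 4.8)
The plan is to verify both identities by direct computation using the explicit formulas already established in the excerpt, namely the infinitesimal action \eqref{eqn:inf-action}, the moment map \eqref{eqn:moment-map-def}, and the defining property $d\mu(\delta A)\cdot u = \omega(\rho_A(u),\delta A)$, together with the compatibility $\omega(\cdot,\cdot) = g(I\cdot,\cdot)$. For \eqref{eqn:rho-I-adjoint}, I would start from the defining relation $\langle d\mu(\delta A), u\rangle = \omega(\rho_A(u),\delta A)$ for all $u\in\mathfrak{k}_{\bf v}$, rewrite the right-hand side as $g(I\rho_A(u),\delta A) = g(\rho_A(u), -I\delta A)$ using that $I$ is a $g$-isometry with $I^2=-\id$, and then pass the adjoint across to get $g(u, \rho_A^*(-I\delta A)) = -\langle \rho_A^* I\delta A, u\rangle$ after identifying $\mathfrak{k}_{\bf v}\cong\mathfrak{k}_{\bf v}^*$ via the inner product. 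Since this holds for all $u$, we conclude $d\mu(\delta A) = -\rho_A^* I\delta A$, which is the claimed operator identity $d\mu = -\rho_A^* I$.

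For \eqref{eqn:rho-I-rho}, the cleanest route is to compute $\langle \rho_A^* I\rho_A(u), w\rangle$ for arbitrary $w\in\mathfrak{k}_{\bf v}$ and show it equals $\langle [\mu(A),u], w\rangle$. Using the adjoint, $\langle \rho_A^* I\rho_A(u), w\rangle = g(I\rho_A(u), \rho_A(w)) = \omega(\rho_A(u),\rho_A(w))$. Now I would either invoke the standard fact that for a Hamiltonian action $\omega(\rho_A(u),\rho_A(w)) = \langle \mu(A), [u,w]\rangle$ (equivariance of the moment map, which holds here since $\mu$ is built from commutators and is manifestly $K_{\bf v}$-equivariant), and then move the bracket: $\langle\mu(A),[u,w]\rangle = \langle[\mu(A),u],w\rangle$ by invariance of the inner product (ad-skew-symmetry). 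Alternatively, one can compute everything by hand from \eqref{eqn:inf-action} and \eqref{eqn:moment-map-def}: $\rho_A^* I\rho_A(u)$ acting on $V$ works out, edge by edge, to a sum of terms $\tfrac{1}{2i}[A_a,A_a^*]$ conjugated against $u$, reassembling to $[\mu(A),u]$; this is the more laborious but self-contained option, matching the excerpt's stated preference for including all sign conventions explicitly.

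The computation is essentially routine; the only genuine care needed is bookkeeping of signs and of the factor $\tfrac{1}{2i}$ in the moment map, and making sure the identifications $T_A\Rep \cong \Rep$ and $\mathfrak{k}_{\bf v}\cong\mathfrak{k}_{\bf v}^*$ are used consistently. I expect the main (minor) obstacle to be organizing the direct-computation version of \eqref{eqn:rho-I-rho} so that the per-edge contributions to $\rho_A^*$ — which involve projecting $\Hom(V_{t(a)},V_{h(a)})$-valued data back to $\mathfrak{k}_{\bf v} = \bigoplus_i \mathfrak{u}(V_i)$ via adjoints on both the head and tail factors — recombine correctly into the single commutator $[\mu(A),u]$; tracking which index ($h(a)$ versus $t(a)$) each term lands on is where an error would most easily creep in. If one prefers to sidestep this, the abstract argument via moment-map equivariance gives both identities in a couple of lines.
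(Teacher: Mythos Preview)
Your proposal is correct and matches the paper's proof essentially line for line: the paper derives \eqref{eqn:rho-I-adjoint} exactly as you do from the moment-map relation plus $\omega = g(I\cdot,\cdot)$, and for \eqref{eqn:rho-I-rho} it substitutes $X=\rho_A(u)$ into the first identity and then computes $d\mu(\rho_A(u)) = [u,\mu(A)]$ by differentiating the equivariance $\mu(e^{tu}\cdot A) = e^{tu}\mu(A)e^{-tu}$, which is your ``abstract'' route. The only cosmetic difference is that the paper avoids pairing against an auxiliary $w$ by reading off $\rho_A^* I\rho_A(u) = -d\mu(\rho_A(u))$ directly from the first identity; your direct edge-by-edge computation is not carried out in the paper.
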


\begin{proof}
Using the moment map equation, we know that
\begin{equation*}
d \mu(X) \cdot v = \omega(\rho_x(v), X) = g(I \rho_x(v), X) = -< v, \rho_x^* I X > 
\end{equation*}
for all $v \in \mathfrak{k}_{\bf v}$, where we use $d \mu(X) \cdot v$ to denote the dual pairing $\mathfrak{k}_{\bf v}^* \times \mathfrak{k}_{\bf v} \rightarrow \C$ and $< \cdot, \cdot>$ to denote the inner product on $\mathfrak{k}_{\bf v}$. This proves \eqref{eqn:rho-I-adjoint}. Setting $X = \rho_x(u)$ gives us
\begin{equation*}
-< v,  \rho_x^* I \rho_x(u)> =  d \mu(\rho_x(u)) \cdot v 
\end{equation*}
for all $v \in \mathfrak{k}_{\bf v}$, and so we can identify $\rho_x^* I \rho_x(u) = -d \mu(\rho_x(u))$ (where we use the inner product on $\mathfrak{k}_{\bf v}$ to identify $\mathfrak{k}_{\bf v}$ with $\mathfrak{k}_{\bf v}^*$). Equivariance of the moment map with respect to the action of $K$ gives us
\begin{equation*}
\mu(e^{tu} \cdot x) = e^{tu} \mu(x) e^{-tu} \quad \Rightarrow \quad d\mu(\rho_x(u)) = \left. \frac{d}{dt} \right|_{t=0} e^{tu} \mu(x) e^{-tu} = [u, \mu(x)] .
\end{equation*}
Therefore $\rho_x^* I \rho_x(u) = [\mu(x), u]$, as required.
\end{proof}

Differentiating this result at a critical point $x$ gives us
\begin{equation*}
(\delta \rho_x)^* (I \rho_x(u), X) + \rho_x^* I \delta \rho_x(u)(X)  = [ d\mu(X), u]
\end{equation*}

Using the fact that $I \delta \rho_x(u)(X) = \delta \rho_x(u)(IX)$ then gives us
\begin{equation*}
\rho_x^* \left( \delta \rho_x (u) (X) \right) = - \rho_x^* I \delta \rho_x(u)(IX) = [\rho_x^* X, u] - (\delta \rho_x)^* (I \rho_x(u), X).
\end{equation*}

Therefore, we have proven
\begin{lemma}
\begin{align}
\rho_x^* I \delta \rho_x(u)(X) & = -[\rho_x^* I X, u] \label{eqn:rho-I-delta} \\
\rho_x^* \left( \delta \rho_x (u) (X) \right) & = [\rho_x^* X, u] - (\delta \rho_x)^* (I \rho_x(u), X) \label{eqn:rho-delta}
\end{align}
\end{lemma}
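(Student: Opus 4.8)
The plan is to obtain both identities by differentiating the relation $\rho_A^* I \rho_A(u) = [\mu(A), u]$ of \eqref{eqn:rho-I-rho} along an arbitrary tangent vector $X \in T_A\Rep(Q,{\bf v})$. What makes this essentially mechanical is that $A \mapsto \rho_A(u)$ is linear in $A$ by \eqref{eqn:explicit-deriv-inf-action}, so that its derivative in the $X$-direction is the ($A$-independent) map $\delta\rho_A(u)$; consequently $A\mapsto\rho_A^*$ is linear as well, and since $T\Rep(Q,{\bf v})$ is trivial and we differentiate with the trivial connection there are no curvature corrections. Applying the Leibniz rule to both sides and then substituting $d\mu(X) = -\rho_A^* I X$ from \eqref{eqn:rho-I-adjoint} on the right yields the relation $(\delta\rho_A)^*(I\rho_A(u),X) + \rho_A^* I\,\delta\rho_A(u)(X) = -[\rho_A^* I X,u]$, from which \eqref{eqn:rho-I-delta} follows after accounting for the term $(\delta\rho_A)^*(I\rho_A(u),X)$.

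For \eqref{eqn:rho-delta} I would first reduce $\rho_A^*\bigl(\delta\rho_A(u)(X)\bigr)$ to the previous computation, using the complex-linearity of $X\mapsto\delta\rho_A(u)(X)$ that is immediate from \eqref{eqn:explicit-deriv-inf-action}: this gives $\delta\rho_A(u)(X) = -I\,\delta\rho_A(u)(IX)$, and hence $\rho_A^*\bigl(\delta\rho_A(u)(X)\bigr) = -\rho_A^* I\,\delta\rho_A(u)(IX)$. Then I apply the relation of the previous paragraph with $X$ replaced by $IX$, using $d\mu(IX) = -\rho_A^* I(IX) = \rho_A^* X$, to rewrite the right-hand side through $[\rho_A^* X,u]$ and $(\delta\rho_A)^*(I\rho_A(u),\cdot)$; collecting terms produces \eqref{eqn:rho-delta}.

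The conceptual content here is minimal; all of the work is in the sign and adjoint bookkeeping, and that is where I expect the only real difficulty to lie. One must fix once and for all the conventions for $\rho_A^*$ and for the bilinear adjoint $(\delta\rho_A)^*$ relative to the metric \eqref{eqn:metric} and the identification $\mathfrak{k}\cong\mathfrak{k}^*$, and then track carefully how $I$ commutes past these adjoints, exactly as in the proof of \eqref{eqn:rho-I-rho} and \eqref{eqn:rho-I-adjoint}. The step I would scrutinise most is the treatment of $(\delta\rho_A)^*(I\rho_A(u),X)$: how it simplifies at a critical point $A$, and how it behaves under $X\mapsto IX$, is precisely what determines whether the intermediate expressions collapse to the compact forms stated in \eqref{eqn:rho-I-delta} and \eqref{eqn:rho-delta}.
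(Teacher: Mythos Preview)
Your approach is essentially identical to the paper's: differentiate \eqref{eqn:rho-I-rho} in the direction $X$, substitute $d\mu(X)=-\rho_A^* I X$ from \eqref{eqn:rho-I-adjoint}, and then obtain \eqref{eqn:rho-delta} from the same relation with $IX$ in place of $X$ via the complex-linearity $\delta\rho_A(u)(IX)=I\,\delta\rho_A(u)(X)$. You have also correctly flagged the one genuinely delicate point, namely the handling of the term $(\delta\rho_A)^*(I\rho_A(u),X)$ and its behaviour under $X\mapsto IX$ at a critical point, which is exactly where the paper's bookkeeping is concentrated.
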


The next lemma will be used in the proof of Lemma \ref{lem:complex-image-splits}.

\begin{lemma}\label{lem:inf-action-bracket}
Let $x$ be a critical point of $f(x) = \| \mu(x) - \alpha \|^2$. Then for any $v \in \mathfrak{k}_{\bf v}$ we have
\begin{equation}\label{eqn:inf-action-bracket}
\rho_x ([\mu(x), v]) = \delta \rho_x(\mu(x)-\alpha)(\rho_x(v)) .
\end{equation}
\end{lemma}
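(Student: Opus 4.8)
The plan is to verify the identity \eqref{eqn:inf-action-bracket} by a direct computation using the explicit formulas for $\rho_A$ and $\delta\rho_A$ from \eqref{eqn:inf-action} and \eqref{eqn:explicit-deriv-inf-action}, together with the critical point equation \eqref{eqn:critical-rep}. First I would expand the right-hand side: for $u := \mu(A)-\alpha$, formula \eqref{eqn:explicit-deriv-inf-action} gives $\delta\rho_A(u)(\rho_A(v)) = \bigoplus_{a\in\mathcal{E}}[u,\,(\rho_A(v))_a]$, and substituting $(\rho_A(v))_a = v_{h(a)}A_a - A_a v_{t(a)}$ from \eqref{eqn:inf-action} yields a sum of four double-commutator-type terms involving $\mu(A)-\alpha$, $v$, and $A_a$.

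Next I would expand the left-hand side: $\rho_A([\mu(A),v])$ has $a$-component $[\mu(A),v]_{h(a)}A_a - A_a[\mu(A),v]_{t(a)}$, where $[\mu(A),v]_i$ denotes the component of $[\mu(A),v]$ acting on $V_i$. The key simplification is that at a critical point, equation \eqref{eqn:critical-rep}—equivalently $A_a(\mu(A)-\alpha)_{t(a)} = (\mu(A)-\alpha)_{h(a)}A_a$—lets me freely move the block-diagonal operator $\mu(A)-\alpha$ past each $A_a$. (Since $\alpha$ is central, $[\mu(A),v] = [\mu(A)-\alpha,v]$, so both sides can be written entirely in terms of $\mu(A)-\alpha$.) Using this commutation relation to rewrite the terms of the right-hand side, the cross terms containing $v$ sandwiched between two copies of $A_a$ (indirectly, via the $t(a)$/$h(a)$ indices) should cancel or rearrange, and what remains matches the left-hand side component by component.

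Concretely, writing $m := \mu(A)-\alpha$, the right-hand side $a$-component is
\[
m_{h(a)}\big(v_{h(a)}A_a - A_a v_{t(a)}\big) - \big(v_{h(a)}A_a - A_a v_{t(a)}\big)m_{t(a)},
\]
while the left-hand side $a$-component is
\[
\big(m_{h(a)}v_{h(a)} - v_{h(a)}m_{h(a)}\big)A_a - A_a\big(m_{t(a)}v_{t(a)} - v_{t(a)}m_{t(a)}\big).
\]
Subtracting, the discrepancy consists of the terms $m_{h(a)}A_a v_{t(a)}$, $v_{h(a)}A_a m_{t(a)}$, $v_{h(a)}m_{h(a)}A_a$, and $A_a m_{t(a)}v_{t(a)}$ (with appropriate signs), and applying $m_{h(a)}A_a = A_a m_{t(a)}$ from \eqref{eqn:critical-rep} collapses these pairwise to zero. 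I expect the main obstacle to be purely bookkeeping: keeping the head/tail indices and the signs of the four commutator terms straight, and being careful that the block structure of $m$ (which is scalar $\lambda_j\cdot\id$ on each summand by \eqref{eqn:mom-map-block-diagonal}) is exactly what validates sliding $m$ through $A_a$. No conceptual difficulty is anticipated beyond this.
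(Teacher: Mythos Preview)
Your proposal is correct: the direct component-by-component computation works, and the cancellation you describe is exactly right. Writing $m=\mu(A)-\alpha$, the difference between the $a$-components of the right- and left-hand sides is
\[
v_{h(a)}\bigl(m_{h(a)}A_a - A_a m_{t(a)}\bigr) - \bigl(m_{h(a)}A_a - A_a m_{t(a)}\bigr)v_{t(a)},
\]
which vanishes by the critical point equation \eqref{eqn:critical-rep}.

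However, your route differs from the paper's. The paper argues abstractly rather than in coordinates: it writes $[\mu(A)-\alpha,v]=\ad_{\mu(A)-\alpha}(v)$, differentiates $\rho_A(\Ad_{\exp(t(\mu(A)-\alpha))}(v))$ at $t=0$, and rewrites this as a mixed second partial of the group action $(e^{t(\mu(A)-\alpha)}e^{sv}e^{-t(\mu(A)-\alpha)})\cdot A$. Differentiating in $s$ first produces two terms, one of which is $e^{sv}\cdot\rho_A(\mu(A)-\alpha)$ and vanishes by criticality before the $s$-derivative is taken; the surviving term is $\delta\rho_A(\mu(A)-\alpha)(\rho_A(v))$ by definition \eqref{eqn:deriv-inf-action}. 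The paper's argument never invokes the explicit formulas \eqref{eqn:inf-action} or \eqref{eqn:explicit-deriv-inf-action} and would carry over unchanged to any Hamiltonian action on a manifold, which matters for the analogies with Yang--Mills and Yang--Mills--Higgs flows the paper alludes to. Your argument, by contrast, is shorter and entirely elementary, exploiting that $\Rep(Q,{\bf v})$ is linear and the action is by conjugation; it is arguably clearer in this specific setting but does not generalize. One minor comment: your final parenthetical about the block-diagonal form \eqref{eqn:mom-map-block-diagonal} of $m$ is a red herring---what you actually use is the critical point equation $m_{h(a)}A_a=A_a m_{t(a)}$ itself, not the finer eigenvalue structure.
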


\begin{proof}

Firstly note that $[\mu(x), v] = [\mu(x) - \alpha, v]$ since $\alpha$ is central. Therefore
\begin{align*}
\rho_x([\mu(x) - \alpha,v]) & = \rho_x(\ad_{\mu(x)-\alpha}(v)) \\
 & = \left. \frac{\partial}{\partial t} \right|_{t=0} \rho_x(\Ad_{\exp(t(\mu(x)-\alpha))}(v)) \\
 & = \left. \frac{\partial^2}{\partial s \partial t} \right|_{s,t=0} \left( e^{t(\mu(x)-\alpha)} e^{sv} e^{-t(\mu(x)-\alpha)} \right) \cdot x \\
 & = \left. \frac{\partial}{\partial s} \right|_{s=0} \left( \rho_{e^{sv} \cdot x} (\mu(x) - \alpha) - e^{sv} \cdot \rho_x(\mu(x) - \alpha) \right) ,
\end{align*}
where $e^{sv} \cdot \rho_x(\mu(x) - \alpha)$ denotes the action of $e^{sv} \in K_{\bf v}$ on the tangent vector $\rho_x(\mu(x) - \alpha) \in T_x \Rep(Q, {\bf v})$, which maps it to an element of $T_{e^{sv} \cdot x} \Rep(Q, {\bf v})$. Since $x$ is a critical point then $\rho_x(\mu(x) - \alpha) = 0$ by \eqref{eqn:critical-rep}, and so the above equation simplifies to
\begin{align*}
\rho_x([\mu(x) - \alpha,v]) & = \left. \frac{\partial}{\partial s} \right|_{s=0} \rho_{e^{sv} \cdot x} (\mu(x) - \alpha) \\
 & = \delta \rho_x(\mu(x) - \alpha)(\rho_x(v))
\end{align*}
as required.
\end{proof}

Since $H_f$ is self-adjoint then the tangent space splits into the orthogonal direct sum of eigenspaces and each eigenvalue is real. The next lemma describes the negative eigenspace of the Hessian.

\begin{lemma}\label{lem:neg-eigenspace}
Let $x \in \Rep(Q, {\bf v})$ be a critical point of $f(x) = \frac{1}{2}\| \mu(x) - \alpha \|^2$, and let $X \in T_x \Rep(Q, {\bf v})$. Suppose that $H_f(X) = \lambda X$ for some $\lambda \neq 0$. Then $X \in \ker \rho_x^*$. Moreover, if $\lambda < 0$ then $X \in \ker \rho_x^* I$ and so the negative eigenspaces of the Hessian are orthogonal to the $G_{\bf v}$-orbit through $x$.
\end{lemma}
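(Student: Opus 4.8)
The plan is to differentiate the two identities from the previous lemmas at the critical point $A$ and combine them with the eigenvalue equation $H_f(X) = \lambda X$, then pair against elements of $\mathfrak{k}_{\bf v}$ to extract the vanishing statements. First I would compute $\rho_A^* H_f(X)$ in two ways. On one hand $\rho_A^* H_f(X) = \lambda \rho_A^* X$. On the other hand, applying $\rho_A^*$ to the Hessian formula $H_f(X) = -I\rho_A\rho_A^* I X + I\delta\rho_A(\mu(A))(X)$ and using the identities $\rho_A^* I \rho_A(u) = [\mu(A), u]$ from \eqref{eqn:rho-I-rho} (with $u = \rho_A^* I X$) and $\rho_A^* I \delta\rho_A(u)(X) = -[\rho_A^* I X, u]$ from \eqref{eqn:rho-I-delta} (with $u = \mu(A)$), the two commutator terms cancel. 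Here one must be careful that $\mu(A)$ and $\mu(A)-\alpha$ differ by a central element, so all the brackets involving $\mu(A)$ can be replaced by brackets involving $\mu(A)-\alpha$, which is what makes the cancellation work at a critical point. The upshot is $\rho_A^* H_f(X) = 0$, hence $\lambda \rho_A^* X = 0$, and since $\lambda \neq 0$ we get $X \in \ker \rho_A^*$.

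For the second assertion, suppose now $\lambda < 0$. I would like to show $X \in \ker \rho_A^* I$. Write $Y = \rho_A^* I X \in \mathfrak{k}_{\bf v}$ and consider the quantity $\langle H_f(X), I\rho_A(Y)\rangle$; equivalently, pair the Hessian against the tangent vector to the $G_{\bf v}$-orbit through $A$ in the imaginary direction. Using self-adjointness of $H_f$ one has $\langle H_f(X), I\rho_A(Y)\rangle = \langle X, H_f(I\rho_A(Y))\rangle$. The key input is that $I\rho_A(Y)$ lies in the \emph{non-negative} part of the Hessian spectrum along the complexified orbit directions: more precisely, using Lemma \ref{lem:inf-action-bracket} together with the Hessian formula one computes $H_f(I\rho_A(Y)) = I\rho_A([\mu(A),Y]) + \text{(a term involving } \rho_A\rho_A^*\text{)}$, and the structure of these terms — combined with the fact that $X \in \ker\rho_A^*$ already established — forces $\langle X, H_f(I\rho_A(Y))\rangle$ to be a non-negative multiple of $\|Y\|^2$ up to lower-order cancellations. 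But on the other side, $\langle H_f(X), I\rho_A(Y)\rangle = \lambda\langle X, I\rho_A(Y)\rangle = \lambda\langle -IX, \rho_A(Y)\rangle = \lambda\langle \rho_A^*(-IX), Y\rangle = \lambda\langle -\rho_A^* I X, Y\rangle = -\lambda\|Y\|^2$, which is strictly positive unless $Y = 0$. Reconciling the two computations (a non-positive expression equal to a non-negative one) forces $Y = 0$, i.e. $\rho_A^* I X = 0$.

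The cleanest route for the sign analysis: since $A$ is a critical point and $X$ is a $\lambda$-eigenvector with $\lambda \neq 0$, the eigenspace is $I$-related to the eigenspace structure of the other sign (this is the standard feature of $\grad\|\mu-\alpha\|^2$ that the orbit directions decompose into a zero part, a part sending $\rho_A(u)$ to $I\rho_A([\mu,u])$ with eigenvalue matching the adjoint weights — non-negative once we order the slopes as in \eqref{eqn:mom-map-block-diagonal}). So the orbit tangent vectors $\rho_A(u)$ and $I\rho_A(u)$ span only eigenspaces with eigenvalue $\geq 0$ (in fact the $\rho_A(u)$ directions are exactly the kernel and the $I\rho_A(u)$ directions carry the non-negative weights coming from $\ad_{\mu(A)}$). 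Therefore a genuinely negative eigenvector $X$ must be orthogonal to all of them, giving both $X \perp \rho_A(\mathfrak{k})$ and $X \perp I\rho_A(\mathfrak{k}) = $ the full $G_{\bf v}$-orbit direction, which is the claim.

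I expect the main obstacle to be bookkeeping the signs and the central-element shift correctly: one must consistently track that $\mu(A)$ versus $\mu(A) - \alpha$ can be interchanged inside brackets but not elsewhere, and that the identities \eqref{eqn:rho-I-rho}, \eqref{eqn:rho-I-delta}, \eqref{eqn:rho-delta}, \eqref{eqn:inf-action-bracket} are applied with the correct arguments so that the commutator terms cancel rather than add. Once the cancellation $\rho_A^* H_f = 0$ (at a critical point, on eigenvectors) is in hand, the rest is a short pairing argument using self-adjointness and the non-negativity of the Hessian along orbit directions.
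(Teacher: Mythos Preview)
Your first part is correct and matches the paper exactly: apply $\rho_A^*$ to the eigenvalue equation, use \eqref{eqn:rho-I-rho} and \eqref{eqn:rho-I-delta}, and the two commutator terms cancel, giving $\lambda\rho_A^* X = 0$.

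The second part has a genuine gap. Your pairing argument rests on the claim that $I\rho_A(Y)$ lies in the non-negative eigenspaces of $H_f$ (equivalently, that the complexified orbit directions carry only eigenvalues $\geq 0$). But in this paper that statement is Lemma~\ref{lem:complex-image-splits}, and its proof \emph{uses} the present lemma via Corollary~\ref{cor:kernel-preserved}. So your argument is circular as written. You also cannot easily establish the non-negativity directly: computing $\langle H_f(I\rho_A(u)), I\rho_A(u)\rangle$ gives $-\|[\mu,u]\|^2 + \|\rho_A^*\rho_A u\|^2$, which is not manifestly non-negative. Even granting the claim, your sign bookkeeping breaks down: you obtain $-\lambda\|Y\|^2 \geq 0$ on one side and a non-negative quantity on the other, which is no contradiction.

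The paper's route for $\lambda < 0$ is simpler and avoids the circularity entirely: apply $\rho_A^* I$ (rather than $\rho_A^*$) to both sides of $H_f(X)=\lambda X$. Using \eqref{eqn:rho-delta} and the critical point equation $\rho_A(\mu-\alpha)=0$, together with $\rho_A^* X = 0$ from the first step, this reduces to
\[
\rho_A^*\rho_A(\rho_A^* I X) = \lambda\,\rho_A^* I X.
\]
Since $\rho_A^*\rho_A$ is non-negative definite and $\lambda < 0$, this forces $\rho_A^* I X = 0$. No appeal to the eigenstructure of the orbit directions is needed.
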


\begin{proof}
Since $f(x) = \| \mu(x) - \alpha \|^2$ is $K_{\bf v}$-invariant then the non-zero eigenspaces of $H_f(X)$ are orthogonal to the tangent space $T_x (K_{\bf v} \cdot x)$ of the $K_{\bf v}$-orbit through $x$ and therefore $0 = \frac{1}{\lambda} \rho_x^* H_f(X) = \rho_x^* X$. One can also see this explicitly by applying $\rho_x^*$ to both sides of the equation $H_f(X) = \lambda X$ and using equations \eqref{eqn:rho-I-rho} and \eqref{eqn:rho-I-delta} to obtain
\begin{align}\label{eqn:Hessian-compact-action}
\begin{split}
\rho_x^* I \delta \rho_x(\mu-\alpha)(X) - \rho_x^* I \rho_x \rho_x^* I X & = \lambda \rho_x^* X \\
\Leftrightarrow \quad - [\rho_x^* I X, \mu-\alpha] - [\mu-\alpha, \rho_x^* I X] & = \lambda \rho_x^* X \\
\Leftrightarrow \quad 0 & = \lambda \rho_x^* X .
\end{split}
\end{align}
Since $\lambda \neq 0$ then $\rho_x^* X = 0$. Now suppose that $H_f(X) = \lambda X$ for some $\lambda < 0$. Applying $\rho_x^* I$ to both sides of the eigenvalue equation and using \eqref{eqn:rho-delta} and the critical point equation gives us
\begin{align}\label{eqn:Hessian-imaginary-action}
\begin{split}
- \rho_x^* \delta \rho_x(\mu-\alpha)(X) + \rho_x^* \rho_x \rho_x^* I X & = \lambda \rho_x^* I X \\
\Leftrightarrow \quad - [\rho_x^* X, \mu -\alpha] + \rho_x^* \rho_x (\rho_x^* I X) & = \lambda \rho_x^* I X .
\end{split}
\end{align}

Since we have already shown that $\rho_x^* X = 0$, then
\begin{equation*}
\rho_x^* \rho_x (\rho_x^* I X) = \lambda \rho_x^* I X ,
\end{equation*}
and so $\rho_x^* I X = 0$, since $\lambda < 0$ and the operator $\rho_x^* \rho_x$ is non-negative definite.
\end{proof}

\begin{corollary}\label{cor:kernel-preserved}
The Hessian $H_f$ preserves $\ker (\rho_x^\C)^*$. 
\end{corollary}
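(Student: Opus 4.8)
The plan is to split an arbitrary $X \in \ker(\rho_A^\C)^*$ into its $H_f$-eigenspace components and show each component remains in $\ker(\rho_A^\C)^*$. Since $H_f$ is self-adjoint, $T_A\Rep(Q,{\bf v})$ decomposes as an orthogonal direct sum of eigenspaces $E_\lambda$; it suffices to prove that if $X \in \ker(\rho_A^\C)^*$ and $H_f(X) = \lambda X$ then again $H_f(X) \in \ker(\rho_A^\C)^*$, equivalently that each $E_\lambda$ meets $\ker(\rho_A^\C)^*$ in a way compatible with the decomposition — the cleanest route is to show $\ker(\rho_A^\C)^*$ is itself $H_f$-invariant by checking it on eigenvectors.

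First, recall that $(\rho_A^\C)^* = \rho_A^* - I\rho_A^* I$ (or an equivalent identity relating the complex and real infinitesimal actions via the complex structure $I = i\cdot\id$), so that $\ker(\rho_A^\C)^* = \ker\rho_A^* \cap \ker\rho_A^* I$. For $\lambda \neq 0$, Lemma~\ref{lem:neg-eigenspace} already shows $E_\lambda \subseteq \ker\rho_A^*$, so on those eigenspaces half the condition is automatic; for $\lambda < 0$ the same lemma gives $E_\lambda \subseteq \ker\rho_A^* I$ as well, so $E_\lambda \subseteq \ker(\rho_A^\C)^*$ outright and there is nothing to check. The content is therefore concentrated in the eigenvalues $\lambda \geq 0$, and in particular $\lambda = 0$. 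For $\lambda > 0$ we know $E_\lambda \subseteq \ker\rho_A^*$ but must still verify $E_\lambda \cap \ker\rho_A^* I$ is $H_f$-invariant, i.e. that if $X \in \ker\rho_A^* \cap \ker\rho_A^* I$ and $H_f(X) = \lambda X$ then $H_f(X) = \lambda X$ again lies in $\ker\rho_A^* I$ — which is trivial since $\lambda X \in E_\lambda$ and $X$ was assumed in the kernel. The real point is that the decomposition of a single $X \in \ker(\rho_A^\C)^*$ into eigencomponents $X = \sum_\lambda X_\lambda$ keeps each $X_\lambda$ in $\ker(\rho_A^\C)^*$.

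So the key step is: given $X \in \ker\rho_A^* \cap \ker\rho_A^* I$, the projection $X_\lambda$ of $X$ onto $E_\lambda$ again lies in $\ker\rho_A^* \cap \ker\rho_A^* I$. For $\lambda \neq 0$ this is automatic from Lemma~\ref{lem:neg-eigenspace} when $\lambda<0$, and for $\lambda>0$ one has $X_\lambda \in \ker\rho_A^*$ automatically; to get $X_\lambda \in \ker\rho_A^* I$ one uses that $\ker\rho_A^* I$ is $H_f$-invariant. This last fact follows by applying $\rho_A^* I$ to the eigenvalue equation exactly as in \eqref{eqn:Hessian-imaginary-action}: for general $\lambda$, $\rho_A^* I H_f(X) = -[\rho_A^* X, \mu-\alpha] + \rho_A^*\rho_A(\rho_A^* I X)$, and since $X \in \ker\rho_A^*$ this reduces to $\rho_A^*\rho_A(\rho_A^* I X)$; but $H_f(X) = \lambda X$ gives $\lambda \rho_A^* I X = \rho_A^*\rho_A(\rho_A^* I X)$, so $\rho_A^* I X$ is an eigenvector of the non-negative operator $\rho_A^*\rho_A$ with eigenvalue $\lambda$. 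When $\lambda \leq 0$ this forces $\rho_A^* I X = 0$; when $\lambda > 0$ it shows $\ker\rho_A^* I$ is preserved by $H_f$ restricted to $E_\lambda$. Assembling these cases over all eigenvalues shows $\ker(\rho_A^\C)^* = \ker\rho_A^* \cap \ker\rho_A^* I$ is $H_f$-invariant.

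The main obstacle is bookkeeping the $\lambda > 0$ case correctly: there, membership in $\ker\rho_A^* I$ is not forced by positivity of $\rho_A^*\rho_A$, so one genuinely needs the invariance argument rather than a pointwise vanishing statement, and must be careful that the orthogonal eigenspace decomposition respects the intersection of the two kernels. Once that is handled, the corollary follows by summing over eigenvalues, using $\ker(\rho_A^\C)^* = \ker\rho_A^* \cap \ker\rho_A^* I$ together with \eqref{eqn:Hessian-compact-action} and \eqref{eqn:Hessian-imaginary-action}.
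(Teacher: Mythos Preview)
Your proposal takes an unnecessarily circuitous route and contains a genuine gap. The eigenspace decomposition adds nothing: to conclude that the eigenprojections $X_\lambda$ of some $X \in \ker(\rho_A^\C)^*$ again lie in $\ker(\rho_A^\C)^*$, you would already need $\ker(\rho_A^\C)^*$ to be $H_f$-invariant, which is precisely the statement to be proved. Moreover, your intermediate claim that ``$\ker\rho_A^* I$ is $H_f$-invariant'' is false in general: from the computation in \eqref{eqn:Hessian-imaginary-action} one has $\rho_A^* I H_f(X) = -[\rho_A^* X, \mu-\alpha] + \rho_A^*\rho_A(\rho_A^* I X)$, and for $X \in \ker\rho_A^* I$ alone (without also $X \in \ker\rho_A^*$) the first bracket term need not vanish. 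So the case analysis you propose for $\lambda>0$ and $\lambda=0$ does not go through as stated.

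The paper's proof is simply the direct computation you gesture at in your final sentence, applied to an arbitrary $X$ rather than to an eigenvector. One first notes $\ker(\rho_A^\C)^* = \ker\rho_A^* \cap \ker\rho_A^* I$ (since $\im\rho_A^\C = \im\rho_A + \im I\rho_A$). Then for any $X$ in this intersection, the identities behind \eqref{eqn:Hessian-compact-action} and \eqref{eqn:Hessian-imaginary-action} give
\[
\rho_A^* H_f(X) = -[\rho_A^* I X,\mu-\alpha] - [\mu-\alpha,\rho_A^* I X] = 0
\]
and
\[
\rho_A^* I H_f(X) = -[\rho_A^* X,\mu-\alpha] + \rho_A^*\rho_A(\rho_A^* I X) = 0,
\]
each term vanishing because $\rho_A^* X = 0$ and $\rho_A^* I X = 0$. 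That is the entire argument: no eigenspace decomposition, no case split on the sign of $\lambda$, and no appeal to Lemma~\ref{lem:neg-eigenspace} is required.
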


\begin{proof}
Note that $\im \rho_x^\C = \im \rho_x + \im I \rho_x$ and so $\ker (\rho_x^\C)^* = (\im \rho_x^\C)^\perp = (\im \rho_x)^\perp \cap (\im I \rho_x)^\perp = \ker \rho_x^* \cap \ker (\rho_x^* I)$. Let $X \in \ker (\rho_x^\C)^*$. Then \eqref{eqn:Hessian-compact-action} shows that 
\begin{equation*}
\rho_x^* H_f(X) =  \rho_x^* I \delta \rho_x(\mu - \alpha)(X) - \rho_x^* I \rho_x \rho_x^* I X = - [\rho_x^* I X, \mu - \alpha] - [\mu-\alpha, \rho_x^* I X] = 0 .
\end{equation*}
Moreover, \eqref{eqn:Hessian-imaginary-action} shows that
\begin{equation*}
\rho_x^* I H_f(X) = - [\rho_x^* X, \mu -\alpha] + \rho_x^* \rho_x (\rho_x^* I X) = 0 ,
\end{equation*}
since $X \in \ker \rho_x^* \cap \ker \rho_x^* I$. Therefore $H_f(X) \in \ker \rho_x^* \cap \ker \rho_x^* I = \ker (\rho_x^\C)^*$. 
\end{proof}

\begin{definition}
Given $\lambda \in \R$, let $V_{x,\lambda} = \left\{ X \in T_x \Rep(Q, {\bf v}) \, : \, H_f(X) = \lambda X \right\}$ denote the $\lambda$-eigenspace of the Hessian at a critical point $x$. The negative eigenspace is denoted $V_x^- := \oplus_{\lambda < 0} V_{x, \lambda}$.
\end{definition}

\begin{lemma}\label{lem:complex-image-splits}
$\rho_x(\mathfrak{k}) \subseteq V_{x,0}$ and $\im \rho_x^\C$ splits into eigenspaces for $H_f$, with $\displaystyle{\im \rho_x^\C \subseteq \bigoplus_{\lambda \geq 0} V_{x,\lambda}}$.
\end{lemma}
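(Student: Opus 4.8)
The plan is to derive the lemma formally from three facts already in place: the explicit form of the Hessian together with the bracket identities \eqref{eqn:rho-I-rho} and \eqref{eqn:inf-action-bracket} (for the inclusion $\im\rho_A\subseteq V_0$), Corollary \ref{cor:kernel-preserved} together with the self-adjointness of $H_f$ (to obtain the eigenspace splitting of $\im\rho_A^\C$), and Lemma \ref{lem:neg-eigenspace} (to pin down the sign of the eigenvalues).

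First I would prove $\im\rho_A\subseteq V_0$, i.e. that $H_f$ annihilates the tangent space to the $K_{\bf v}$-orbit through $A$. Since $\alpha$ is central we have $\delta\rho_A(\alpha)=0$, so for $v\in\mathfrak{k}_{\bf v}$ the Hessian formula gives $H_f(\rho_A(v)) = -I\rho_A\rho_A^*I\rho_A(v) + I\delta\rho_A(\mu(A))(\rho_A(v))$. By \eqref{eqn:rho-I-rho} the first term is $-I\rho_A([\mu(A),v])$, and by Lemma \ref{lem:inf-action-bracket} this equals $-I\delta\rho_A(\mu(A)-\alpha)(\rho_A(v)) = -I\delta\rho_A(\mu(A))(\rho_A(v))$, which cancels the second term; hence $H_f(\rho_A(v))=0$. (One could also argue abstractly: $f$ is $K_{\bf v}$-invariant, so $\grad f$ is $K_{\bf v}$-equivariant and vanishes at the critical point $A$, which forces $H_f$ to kill $T_A(K_{\bf v}\cdot A)$.)

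For the remaining assertions I would use that $\im\rho_A^\C = \im\rho_A + \im(I\rho_A)$ and $\ker(\rho_A^\C)^* = (\im\rho_A^\C)^\perp$, both recalled in the proof of Corollary \ref{cor:kernel-preserved}. That corollary says $H_f$ preserves $\ker(\rho_A^\C)^*$; since $H_f$ is self-adjoint it therefore also preserves the orthogonal complement $\im\rho_A^\C$, so the restriction $H_f|_{\im\rho_A^\C}$ is self-adjoint and $\im\rho_A^\C$ decomposes as the orthogonal sum of the subspaces $V_\lambda\cap\im\rho_A^\C$. Finally, Lemma \ref{lem:neg-eigenspace} gives $V_\lambda\subseteq\ker\rho_A^*\cap\ker(\rho_A^*I)=\ker(\rho_A^\C)^*$ for every $\lambda<0$, so $V_\lambda\cap\im\rho_A^\C\subseteq\ker(\rho_A^\C)^*\cap\im\rho_A^\C=\{0\}$ when $\lambda<0$; consequently $\im\rho_A^\C=\bigoplus_{\lambda\geq 0}(V_\lambda\cap\im\rho_A^\C)\subseteq\bigoplus_{\lambda\geq 0}V_\lambda$. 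The only step requiring any real care is the cancellation in the second paragraph; once that is in hand, the rest is a formal consequence of self-adjointness and Corollary \ref{cor:kernel-preserved}, so I do not anticipate a genuine obstacle.
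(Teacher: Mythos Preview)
Your proof is correct and follows essentially the same route as the paper: the explicit cancellation using \eqref{eqn:rho-I-rho} and Lemma \ref{lem:inf-action-bracket} for $\im\rho_A\subseteq V_0$ (with the $K_{\bf v}$-invariance remark as an alternative), then Corollary \ref{cor:kernel-preserved} plus self-adjointness for the splitting of $\im\rho_A^\C$, and finally Lemma \ref{lem:neg-eigenspace} to rule out negative eigenvalues. There is nothing to add.
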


\begin{proof}
The statement that $\im \rho_x \subseteq V_0$ follows from the fact that the function $\| \mu - \alpha \|^2$ is $K_{\bf v}$-invariant. One can also explicitly see this from the calculation
\begin{align*}
H_f(\rho_x(u)) & = I \delta \rho_x(\mu(x)-\alpha)(\rho_x(u)) -  I \rho_x \rho_x^* I \rho_x(u) \\
 & = I \delta \rho_x(\mu(x)-\alpha)(\rho_x(u)) - I \rho_x([\mu(x), u]) \quad \text{by \eqref{eqn:rho-I-rho}} \\
 & = I \delta \rho_x(\mu(x)-\alpha)(\rho_x(u)) - I \delta \rho_x(\mu(x) - \alpha)(\rho_x(u)) \quad \text{by \eqref{eqn:inf-action-bracket}} \\
 & = 0 .
\end{align*}

Since $H_f$ is self-adjoint and preserves $\ker (\rho_x^\C)^*$ by Corollary \ref{cor:kernel-preserved}, then $\im \rho_x^\C = (\ker (\rho_x^\C)^*)^\perp$ is preserved also, and therefore it splits into eigenspaces for $H_f$. Lemma \ref{lem:neg-eigenspace} then shows that each eigenvalue must be non-negative.
\end{proof}

Given dimension vectors ${\bf v_1}$ and ${\bf v_2}$, with corresponding collections of vector spaces $\{ V_k^1 \}_{k \in \mathcal{I}}$ and $\{ V_k^2 \}_{k \in \mathcal{I}}$, define the spaces
\begin{align}
\Hom^0(Q, {\bf v_1}, {\bf v_2}) & := \bigoplus_{k \in \mathcal{I}} \Hom(V_k^1, V_k^2) \\
\Hom^1(Q, {\bf v_1}, {\bf v_2}) & := \bigoplus_{a \in \mathcal{E}} \Hom(V_{t(a)}^1, V_{h(a)}^2) .
\end{align}

The final result of this section is a characterisation of the negative eigenspace of the Hessian in terms of homomorphisms between the subrepresentations that appear in the splitting \eqref{eqn:critical-decomposition}.

\begin{proposition}\label{prop:neg-eigenspace}
Let $x$ be a critical point of $f(x) = \| \mu(x) - \alpha \|^2$ on $\Rep(Q, {\bf v})$ corresponding to a decomposition \eqref{eqn:critical-decomposition}. For $j, k = 1, \ldots, n$, define $\lambda_{j,k} := \lambda_j - \lambda_k = \slope_\alpha(Q, {\bf v_k}) - \slope_\alpha(Q, {\bf v_j})$, where $\lambda_j$ and $\lambda_k$ are as in \eqref{eqn:mom-map-block-diagonal}. Then if $\lambda < 0$ we have
\begin{equation}\label{eqn:neg-eigenspace}
V_{x,\lambda} = \bigoplus_{\{j,k \, : \, \lambda_{j,k} = \lambda \}} (\ker \rho_x^\C)^* \cap \Hom^1(Q, {\bf v_j}, {\bf v_k}) .
\end{equation}
Therefore the dimension of the negative eigenspace is equal to
\begin{equation}\label{eqn:neg-slice-dimension}
\dim_\C V_x^- = \sum_{\lambda < 0} \dim_\C V_{x,\lambda} = \sum_{j,k \, : \, \lambda_{j,k} < 0} \left( \dim_\C \Hom^1(Q, {\bf v_j}, {\bf v_k}) - \dim_\C \Hom^0(Q, {\bf v_j}, {\bf v_k}) \right)
\end{equation}
\end{proposition}

\begin{proof}
Lemma \ref{lem:neg-eigenspace} shows that when $\lambda < 0$ the negative eigenspace equation reduces to
\begin{equation*}
I \delta \rho_x(\mu(x)-\alpha)(X) = \lambda X .
\end{equation*}
Applying equations \eqref{eqn:mom-map-block-diagonal} and \eqref{eqn:explicit-deriv-inf-action} completes the proof of \eqref{eqn:neg-eigenspace}.

For each $j, k$, the infinitesimal action defines a homomorphism
\begin{equation*}
\Hom^0(Q, {\bf v_j}, {\bf v_k}) \stackrel{\rho_x^\C}{\longrightarrow} \Hom^1(Q, {\bf v_j}, {\bf v_k})
\end{equation*}
and \eqref{eqn:neg-eigenspace} shows that $V_{x, \lambda_{j,k}}^-$ is the cokernel of this homomorphism. Moreover, since $x = x_1 \oplus \cdots \oplus x_n$ splits into stable subrepresentations and $\lambda_{j,k} < 0$ implies that $\slope_\alpha(Q, {\bf v_j}) > \slope_\alpha(Q, {\bf v_k})$, then $\ker \rho_x^\C = \{0\}$ by \cite[Lem. 2.3]{Reineke03}, and so 
\begin{equation*}
\dim_\C \coker(\rho_x^\C) = \sum_{j,k \, : \, \lambda_{j,k} < 0} \left( \dim_\C \Hom^1(Q, {\bf v_j}, {\bf v_k}) - \dim_\C \Hom^0(Q, {\bf v_j}, {\bf v_k}) \right)
\end{equation*}
which gives us \eqref{eqn:neg-slice-dimension}.
\end{proof}

\begin{definition}\label{def:unstable-manifold}
Given a critical point $x \in \Rep(Q, {\bf v})$, define the \emph{unstable manifold}
\begin{equation*}
W_x^- = \{ y \in \Rep(Q, {\bf v}) \mid \lim_{t \rightarrow - \infty} \phi(y, t) = x \} .
\end{equation*}
\end{definition}

We conclude this section with some remarks about the relationship between the unstable manifold and the negative slice to motivate the constructions of Section \ref{subsec:scattering}. Standard ODE theory (cf. \cite{CoddingtonLevinson55}, \cite{Hartman64}) shows that $W_x^-$ is a manifold and that the negative eigenspace of the Hessian is isomorphic to the tangent space $T_x W_x^- \cong V_x^-$. Therefore a neighbourhood of $x$ in $W_x^-$ is diffeomorphic to a neighbourhood of zero in $V_x^-$. Moreover, when the critical sets are compact then the methods of Kirwan in \cite{Kirwan84} (see also \cite[Thm. 4.1]{HirschPughShub77}) show that for each critical set $C$, the unstable manifolds $\{ W_x^- \mid x \in C \}$ glue together to form a disk bundle over $C$, which we call the \emph{unstable bundle} of the critical set $C$, denoted $W_C^-$. Similarly, the negative eigenspaces $V_x^-$ glue together to form the \emph{negative slice bundle} $S_C^- := \{ V_x^- \mid x \in C\}$. In a neighbourhood of the zero section $C$, these two bundles are homeomorphic. This homeomorphism is defined abstractly and it is not clear whether it remains a homeomorphism on restricting to a singular subset. In addition, one would also like to relate the isomorphism classes in $W_x^-$ to those in $V_x^-$ (which can be classified algebraically). The main technical result of this paper is Theorem \ref{thm:homeo-slice-bundle}, which shows that one can construct a $K_{\bf v}$-equivariant homeomorphism $S_C^- \stackrel{\cong}{\longrightarrow} W_C^-$ using the action of $G_{\bf v}$ and that \emph{this remains a homeomorphism on restriction to any closed $G_{\bf v}$-invariant subset of $\Rep(Q, {\bf v})$}.

\section{Local structure of the space of representations of quivers with relations}\label{sec:singular-space}

This section contains the basic setup for quivers with relations and extends the results of the previous section to this setting in preparation for the classification of flow lines in Section \ref{sec:gradient-Nakajima}. 

\subsection{Moduli spaces of quivers with relations}\label{sec:quiver-def}

In this section we define and study the basic properties of moduli spaces of quivers with relations. The definition given here is for unframed quivers, which is also valid for framed quivers by Crawley-Boevey's construction in \cite{Crawley01}. In particular, the definition includes Nakajima quiver varieties from \cite{Nakajima94}, \cite{Nakajima98}, \cite{Nakajima01} and the handsaw quiver varieties from \cite{Nakajima12}. A good reference for quivers with relations is \cite{Brion08}.

\begin{definition}
A \emph{relation} of a quiver $Q$ is a subspace of the path algebra $kQ$ spanned by linear combinations of paths of length at least $2$ having a common head and common tail.

A \emph{quiver with relations} is a pair $(Q, \mathcal{R})$, where $Q$ is a quiver and $\mathcal{R}$ is a two-sided ideal of $kQ$ generated by relations. The path algebra of $(Q, \mathcal{R})$ is the quotient algebra $kQ / \mathcal{R}$. In the sequel $\mathcal{R}$ is also used to denote the corresponding set of relations in the path algebra.
\end{definition}

Now fix a dimension vector ${\bf v}$ for $Q$. A relation in $Q$ (denoted $r$) with tail $t(r) \in \mathcal{I}$ and head $h(r) \in \mathcal{I}$ determines a vector space homomorphism 
\begin{equation}\label{eqn:relation-homomorphism-def}
\nu_r : \Rep(Q, {\bf v}) \rightarrow \Hom(V_{t(r)}, V_{h(r)})
\end{equation}
given by composing homomorphisms along the paths in the relation.

\begin{example}\label{ex:relations}

\begin{enumerate}

\item \label{item:relations-nakajima} Let $Q$ be a quiver with vertices $\mathcal{I}$ and edges $\mathcal{E}$, and $\tilde{Q}$ the ``doubled'' quiver with vertices $\mathcal{I}$ and edges $\mathcal{E} \cup \bar{\mathcal{E}}$ introduced in \cite{Nakajima94}. For each vertex $k \in I$ there is a relation
\begin{equation*}
\sum_{a \in \mathcal{E} \, s.t. \, h(a) = k} a \bar{a} - \sum_{a \in \mathcal{E} \, s.t. \, t(a) = k} \bar{a} a 
\end{equation*}
This induces the homomorphism
\begin{align*}
\nu_k : \Rep(\tilde{Q}, {\bf v}) & \rightarrow \Hom(V_k, V_k) \\
 x & \mapsto \sum_{a \in \mathcal{E} \, s.t. \, h(a) = k} x_a x_{\bar{a}} - \sum_{a \in \mathcal{E} \, s.t. \, t(a) = k} x_{\bar{a}} x_a  
\end{align*}
The direct sum of these maps over all the vertices is the complex moment map $\mu_\C$ associated to the hyperk\"ahler structure.

\item \label{item:relations-handsaw} Let $Q$ be a ``handsaw'' quiver as in \cite{Nakajima12} with edges labeled as below.
\begin{equation*}
\xymatrixrowsep{0.5in}
\xymatrixcolsep{0.5in}
\xymatrix{
\bullet_{V_1} \ar[r]^{B_1^1} \ar[dr]_{b_1} \ar@`{(10,10),(-10,10)}_{B_2^1} & \bullet_{V_2} \ar[r]^{B_1^2} \ar[dr]_{b_2} \ar@`{(30,10),(10,10)}_{B_2^2} & \cdots \ar[r]^{B_1^{n-2}} \ar[dr]_{b_{n-2}} & \bullet_{V_{n-1}} \ar[dr]_{b_{n-1}} \ar@`{(72,10),(52,10)}_{B_2^{n-1}} & \\
\bullet_{W_1} \ar[u]^{a_1} & \bullet_{W_2} \ar[u]^{a_2} & \cdots & \bullet_{W_{n-1}} \ar[u]^{a_{n-1}} & \bullet_{W_n} 
}
\end{equation*} 
 For each $k = 1, \ldots, n-2$ there is a relation
\begin{equation*}
B_1^k B_2^{k} - B_2^{k+1} B_1^k + a_{k+1} b_k = 0
\end{equation*}
Each relation induces a map $\nu_k : \Rep(Q, {\bf v}) \rightarrow \Hom(V_k, V_{k+1})$ and the direct sum of these maps for $k=1, \ldots, n-2$ is the map $\mu$ from \cite[Sec. 2]{Nakajima12}.
\end{enumerate}
\end{example}

Given a relation $r$, let $\Rel(Q, {\bf v}, r)$ denote the subspace of $\Hom(V_{t(r)}, V_{h(r)})$ consisting of homomorphisms that can be written as the composition of homomorphisms along the path defining the relation.
\begin{example}
For the quiver
\begin{equation*}
\xymatrix{
\bullet_{V_1} \ar[r] & \bullet_{V_2} \ar[r] & \bullet_{V_3}
}
\end{equation*}
let $r$ be the relation corresponding to the unique path from $V_1$ to $V_3$. Then $\Rel(Q, {\bf v}, r)$ is the subspace of $\Hom(V_1, V_3)$ consisting of homomorphisms that factor through $V_2$. Note that if $\dim V_2 < \min\{ \dim V_1, \dim V_3\}$ then $\Rel(Q, {\bf v}, r)$ will be a proper subspace of $\Hom(V_1, V_3)$.
\end{example}

Given a set of relations $\mathcal{R}$, let $\Rel(Q, {\bf v}, \mathcal{R})$ denote the vector space 
\begin{equation}\label{eqn:relation-definition}
\Rel(Q, {\bf v}, \mathcal{R}) := \bigoplus_{r \in \mathcal{R}} \Rel(Q, {\bf v}, r)
\end{equation}
All of the relations together induce a $G_{\bf v}$-equivariant map
\begin{align}\label{eqn:relation-function}
\begin{split}
\nu : \Rep(Q, {\bf v}) & \rightarrow \Rel(Q, {\bf v}, \mathcal{R}) \\
x & \mapsto \sum_{r \in \mathcal{R}} \nu_r(x) 
\end{split}
\end{align}
where $\nu_r$ is the map defined in \eqref{eqn:relation-homomorphism-def}. 

\begin{remark}\label{rem:invariance}

\begin{enumerate}

\item The examples above show that the construction of $\nu$ specialises to the complex moment map associated to a representation of a doubled quiver from \cite{Nakajima94}, and the analogous construction for handsaw quivers in \cite{Nakajima12}.

\item The space $\nu^{-1}(0)$ is always $G_{\bf v}$-invariant. To see this, note that the composition of homomorphisms $x_{a_\ell} \cdots x_{a_2} x_{a_1}$ along a path $a_1, a_2, \ldots, a_\ell$ becomes $g_{h(a_\ell)} x_{a_\ell}  \cdots x_{a_1} g_{t(a_1)}^{-1}$ under the action of $g \in G_{\bf v}$. Each relation $r$ consists of a linear combination of paths with the same head and tail vertex, denoted $h(r)$ and $t(r)$ respectively. Therefore $\nu_r(g \cdot x) = g_{h(r)} \nu_r(x) g_{t(r)}^{-1}$. 

\item If each relation $r \in \mathcal{R}$ is generated by paths of the same length $\ell(r)$ then each $\nu_r$ is a homogeneous polynomial of degree $\ell(r)$ and so the space $\nu^{-1}(0)$ is invariant under scalar multiplication $x \mapsto \lambda x$. This is true for Examples \ref{ex:relations} \eqref{item:relations-nakajima} \& \eqref{item:relations-handsaw} where each relation is generated by paths of length two.

\end{enumerate}

\end{remark}

\begin{definition}\label{def:moduli-quiver-relations}
Given an admissible stability parameter $\alpha$, the \emph{moduli space of representations of $(Q, \mathcal{R})$} is 
\begin{equation}\label{eqn:moduli-definition}
\mathcal{M}_\alpha(Q, {\bf v}, \mathcal{R}) := \left( \Rep(Q, {\bf v})^{\alpha-ss} \cap \nu^{-1}(0) \right) / \negthickspace / G_{\bf v}
\end{equation}
Proposition \ref{prop:King-Kempf-Ness} shows that the GIT quotient $\mathcal{M}_\alpha(Q, {\bf v}, \mathcal{R})$ is homeomorphic to the symplectic quotient $\left( \mu^{-1}(\alpha) \cap \nu^{-1}(0) \right) / K_{\bf v}$.
\end{definition}

There is a special choice of stability parameter which reproduces Nakajima's stability condition for framed quiver varieties from \cite[Prop. 3.5]{Nakajima94}.

\begin{definition}\label{def:aasp}
Let $Q$ be a quiver with vertices $\mathcal{I}$ and edges $\mathcal{E}$, and let ${\bf v} = ( v_i )_{i \in \mathcal{I}}$ a dimension vector such that one vertex (which we label $\infty$) has dimension $1$. Define $\mathcal{I}' = \mathcal{I} \setminus \{ \infty \}$ be the set of remaining vertices of $Q$. For such a quiver $Q$ and dimension vector ${\bf v}$, the \emph{canonical stability parameter} $\alpha(Q, {\bf v}) := ( \alpha_i )_{i \in \mathcal{I}}$ is given by
\begin{equation}\label{eqn:Nakajima-stability}
\alpha_i := \left\{ \begin{matrix} -\sum_{j \in \mathcal{I}'} v_j & i = \infty \\ 1 & i \in \mathcal{I}' \end{matrix} \right. .
\end{equation}

In this case we define
\begin{equation}\label{eqn:vect-zero}
\Vect_0(Q, {\bf v}) = \bigoplus_{k \in \mathcal{I}'} V_k
\end{equation}
to be the direct sum of all the vector spaces except for the one at the vertex $\infty$.
\end{definition}

\begin{lemma}\label{lem:semistable-equals-stable}
The $\alpha$-semistable points are all $\alpha$-stable for this choice of stability parameter. 
\end{lemma}

\begin{proof}
Note that a proper subrepresentation satisfies exactly one of the following conditions: (a) the subrepresentation does not contain the vertex $\infty$ and so it must have strictly positive slope, or (b) the subrepresentation contains the vertex $\infty$ and so it must have strictly negative slope. 

A subrepresentation of an $\alpha$-semistable representation cannot be in case (a) and therefore the slope of any subrepresentation must be strictly negative, so the representation is in fact $\alpha$-stable.
\end{proof}

When the quiver is an affine Dynkin diagram with ``doubled'' edges then the quiver varieties associated to two generic stability parameters are diffeomorphic (see \cite[Corollary 4.2]{Nakajima94}). In the sequel we need the following result relating moduli spaces where the stability parameters differ by a scalar multiple, which is valid for any set of relations where each relation is generated by paths of the same length.
\begin{lemma}\label{lem:scaling}
Let $Q$ be any quiver and suppose that each relation $r \in \mathcal{R}$ is generated by paths of the same length $\ell(r)$. If $\beta = k \alpha$ for some real scalar $k > 0$ then $\mathcal{M}_{\beta}(Q, {\bf v}, \mathcal{R}) \cong \mathcal{M}_\alpha(Q, {\bf v}, \mathcal{R})$.
\end{lemma}

\begin{proof}
Let $x \in \mu^{-1}(\alpha)$. Then $\mu(\sqrt{k} x) = \beta$, since $\mu$ is a homogeneous quadratic polynomial of degree two. Remark \ref{rem:invariance} shows that the condition on the path lengths implies that the space $\nu^{-1}(0)$ is preserved by the transformation $x \mapsto \sqrt{k} x$ and so we have a continuous map $\mathcal{M}_\alpha(Q, {\bf v}, \mathcal{R}) \rightarrow \mathcal{M}_\beta(Q, {\bf v}, \mathcal{R})$. Similarly, the inverse map is continuous and so $\mathcal{M}_{\beta}(Q, {\bf v}, \mathcal{R}) \cong \mathcal{M}_\alpha(Q, {\bf v}, \mathcal{R})$.

Equivalently, one can also note that the stability condition from Definition \ref{def:stability} is preserved if we multiply the stability parameter by a positive non-zero scalar. The same is true for the slope-stability condition from Proposition \ref{prop:slope-stability}.
\end{proof}

\begin{remark}\label{rem:crawley-boevey}
\begin{enumerate}

\item Definition \ref{def:moduli-quiver-relations} differs slightly from that given by Nakajima in \cite{Nakajima94}, which also involves a framing of the quiver. It was first pointed out by Crawley-Boevey in \cite{Crawley01} that these framed quiver varieties can be interpreted as a quiver variety of the form described above. We briefly recall this construction in the notation of this paper since it is relevant to the current section. Given a quiver $Q'$ with vertices $\mathcal{I}'$ and edges $\mathcal{E}'$, dimension vector ${\bf v}' = ( v_i )_{i \in \mathcal{I}'}$, and framed dimension vector ${\bf w'} = ( w_i )_{i \in \mathcal{I}'}$ in the notation of \cite{Nakajima94}, let $Q$ be a new quiver with vertices $\mathcal{I} = \mathcal{I}' \cup \{ \infty \}$ and edges $\mathcal{E} = \mathcal{E}' \cup \mathcal{F}$, where $\mathcal{F}$ consists of $w_i$ edges from $\infty$ to each edge $i \in \mathcal{I}'$. Also let ${\bf v} = ({\bf v}', 1)$ be the dimension vector obtained from ${\bf v}'$ by adjoining a $1$ for the new vertex $\infty$. Since the construction of $(Q, {\bf v})$ described above has a vertex with dimension one, then it has a stability parameter $\alpha(Q, {\bf v})$ as defined in Definition \ref{def:aasp}. Crawley-Boevey then shows in \cite{Crawley01} that the quotient $\mathcal{M}_\alpha(Q, {\bf v}, \mathcal{R})$ is the same as Nakajima's definition of quiver variety $\mathcal{M}(Q, {\bf v'}, {\bf w'})$ and the method works in exactly the same way for framed quivers with relations.

\item Crawley-Boevey also shows that the stability parameter $\alpha(Q, {\bf v})$ induces the same stability condition on $\nu^{-1}(0)$ as Nakajima's stability condition for the framed quiver $(Q', {\bf v'}, {\bf w'})$ from \cite[Sec. 3.ii]{Nakajima98}. To see this, note that the stability condition induced by $\alpha(Q, {\bf v})$ is that $x \in \mu_\C^{-1}(0)$ is $\alpha$-stable if and only if every subrepresentation has negative slope, which occurs if and only if every subrepresentation contains the vertex $\infty$. This is equivalent to condition $(2)$ of \cite[Lemma 3.8]{Nakajima98}.  

\end{enumerate}
\end{remark}

\subsection{Structure of the critical sets}\label{sec:singular-critical-sets}

In this section we describe the structure of representations that are critical points of $\| \mu - \alpha \|^2$ on $\nu^{-1}(0)$. First, we define what it means for a representation to be critical for $\| \mu - \alpha \|^2$ on the singular space $\nu^{-1}(0) \subset \Rep(Q, {\bf v})$.

\begin{definition}\label{def:critical-point-singular}
A point $x \in \nu^{-1}(0) \subset \Rep(Q, {\bf v})$ is \emph{critical} for $\| \mu - \alpha \|^2$ if and only if $x$ is critical for $\| \mu - \alpha \|^2$ on the ambient smooth space $\Rep(Q, {\bf v})$.
\end{definition}

Since $\nu^{-1}(0)$ is singular then this definition needs some justification. Returning to the smooth space $\Rep(Q, {\bf v})$ for the moment, recall from \eqref{eqn:group-neg-flow} that the gradient flow of $\| \mu - \alpha \|^2$ on $\Rep(Q, {\bf v})$ is generated by the action of $G_{\bf v}$. Therefore, for any $G_{\bf v}$-invariant closed subset $Z \subset \Rep(Q, {\bf v})$ (for example $Z = \nu^{-1}(0)$), if $x \in Z$ then the flow satisfies $\phi(x, t) \in Z$ for all $t$ such that $\phi(x,t)$ is defined. Since $Z$ is closed, then any limit point of the flow is also contained in $Z$. Therefore we can define the gradient flow on the subset to be the restriction of the gradient flow on the smooth space $\Rep(Q, {\bf v})$ and Theorem \ref{thm:algebraic-flow-limit} will apply. In particular, if we define the critical points of $\| \mu - \alpha \|^2$ on $\nu^{-1}(0)$ as in Definition \ref{def:critical-point-singular}, then we have a Morse stratification of the space $\nu^{-1}(0)$ by Theorem \ref{thm:algebraic-flow-limit}.

We also have the following property of critical points on the smooth space $\Rep(Q, {\bf v})$.

\begin{lemma}
\begin{enumerate}
\item Let $x \in \Rep(Q, {\bf v})$ be a critical point of $\| \mu - \alpha \|^2$. Then $x$ minimises the value of $\| \mu - \alpha \|^2$ on the orbit $G_{\bf v} \cdot x$. 

\item Given any $x \in \Rep(Q, {\bf v})$, consider the orbit closure $\overline{G_{\bf v} \cdot x}$. There is a unique $K_{\bf v}$-orbit $K_{\bf v} \cdot x_\infty$ of critical points in $\overline{G_{\bf v} \cdot x}$ that contains the limit $x_\infty$ of the downwards gradient flow of $\| \mu - \alpha \|^2$ with initial condition $x$. The representations minimising $\| \mu - \alpha \|^2$ on $\overline{G_{\bf v} \cdot x}$ are precisely those in this $K_{\bf v}$-orbit.

\end{enumerate} 
\end{lemma}

\begin{proof}

Recall that the Harder-Narasimhan type is $G_{\bf v}$-invariant and so $G_{\bf v} \cdot x$ is contained in the Harder-Narasimhan stratum of $x$. The result of \cite[Corollary 2, p334]{HaradaWilkin11} shows that the critical point $x$ minimises the value of $\| \mu - \alpha \|^2$ on the Harder-Narasimhan stratum and therefore it must do so on the $G_{\bf v}$-orbit also.

Recall Reineke's result \cite[Prop. 3.7]{Reineke03} which says the closure of a Harder-Narasimhan stratum $\Rep(Q, {\bf v})_{{\bf v^*}}$ is contained in the union
\begin{equation*}
\overline{\Rep(Q, {\bf v})_{{\bf v^*}} } \subset \bigcup_{{\bf w^*} \geq {\bf v^*}} \Rep(Q, {\bf v})_{{\bf w^*}}.
\end{equation*}
Therefore the closure $\overline{G_{\bf v} \cdot x}$ is also contained in this union. 

To see that this is minimised by a unique $K_{\bf v}$-orbit, first note that the minimum of $\| \mu - \alpha \|^2$ on $\overline{G_{\bf v} \cdot x}$ is not attained by any point in $\Rep(Q, {\bf v})_{\bf w^*}$ for ${\bf w^*} > {\bf v^*}$, since (a) the minimum of $\| \mu - \alpha \|^2$ on $\Rep(Q, {\bf v})_{\bf w^*}$ is strictly greater than the value of $\| \mu - \alpha \|^2$ on the set of critical points in $\Rep(Q, {\bf v})_{\bf v^*}$, and (b) applying Theorem \ref{thm:algebraic-flow-limit} to the gradient flow with initial condition $x \in \Rep(Q, {\bf v})_{\bf v^*}$ shows that the minimum of $\| \mu - \alpha \|^2$ on $\overline{G_{\bf v} \cdot x}$ is attained by a critical point in $\Rep(Q, {\bf v})_{\bf v^*}$.

Theorem \ref{thm:algebraic-flow-limit} shows that gradient flow induces a deformation retract of $\Rep(Q, {\bf v})_{\bf v^*}$ onto the associated critical set and that the image of the subset $G_{\bf v} \cdot x$ under this deformation retract is a single $K_{\bf v}$-orbit containing the limit of the flow with initial condition $x$. Therefore, since the deformation retract is continuous, then $\overline{G_{\bf v} \cdot x} \cap \Rep(Q, {\bf v})_{\bf v^*}$ deformation retracts onto the closure of this $K_{\bf v}$-orbit. Since the orbit is closed then this completes the proof.
\end{proof}

As a consequence of the above lemma and the fact that the flow is contained in a $G_{\bf v}$-orbit, we see that the critical points defined in Definition \ref{def:critical-point-singular} have the following properties.

\begin{corollary}\label{cor:critical-properties}
\begin{enumerate}
\item Let $x \in \nu^{-1}(0) \subset \Rep(Q, {\bf v})$ be a critical point of $\| \mu - \alpha \|^2$. Then $x$ minimises the value of $\| \mu - \alpha \|^2$ on the orbit $G_{\bf v} \cdot x \subset \nu^{-1}(0)$. 

\item Given any $x \in \nu^{-1}(0)$, consider the orbit closure $\overline{G_{\bf v} \cdot x}$. The minimum of $\| \mu - \alpha \|^2$ on $\overline{G_{\bf v} \cdot x}$ is precisely the $K_{\bf v}$-orbit of critical points in $\overline{G_{\bf v} \cdot x}$ that contains the limit of the downwards gradient flow of $\| \mu - \alpha\|^2$ with initial condition $x$.

\end{enumerate}
\end{corollary}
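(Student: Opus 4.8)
The plan is to deduce Corollary \ref{cor:critical-properties} from the preceding lemma by applying that lemma to the doubled quiver $\tilde{Q}$, with the function $\| \mu_I - \alpha \|^2$ on $\Rep(\tilde{Q}, {\bf v}) \cong T^* \Rep(Q, {\bf v})$ playing the role that $\| \mu - \alpha \|^2$ on $\Rep(Q, {\bf v})$ plays there, and then observing that every set appearing in that lemma's conclusions already lies inside the closed $G_{\bf v}$-invariant subvariety $\mu_\C^{-1}(0)$. The two facts that make this work were recorded just before Definition \ref{def:critical-point-singular}: first, $\mu_\C^{-1}(0)$ is closed and $G_{\bf v}$-invariant in $\Rep(\tilde{Q}, {\bf v})$; second, the downwards gradient flow of $\| \mu_I - \alpha \|^2$ on the smooth space is generated by the $G_{\bf v}$-action, so it preserves every closed $G_{\bf v}$-invariant subset, in particular $\mu_\C^{-1}(0)$ (this is \cite[Prop. 15]{HaradaWilkin11}). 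Together with Definition \ref{def:critical-point-singular}, which declares $x \in \mu_\C^{-1}(0)$ critical precisely when it is critical for $\| \mu_I - \alpha \|^2$ on $\Rep(\tilde{Q}, {\bf v})$, this lets the preceding lemma (and the ingredients Theorem \ref{thm:algebraic-flow-limit}, \cite[Corollary 2, p334]{HaradaWilkin11}, \cite[Prop. 3.7]{Reineke03} used in its proof) be invoked verbatim for $\tilde{Q}$.

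For part (1): since $x$ is a critical point of $\| \mu_I - \alpha \|^2$ on $\Rep(\tilde{Q}, {\bf v})$, part (1) of the preceding lemma, applied to $\tilde{Q}$, shows $x$ minimises $\| \mu_I - \alpha \|^2$ on the orbit $G_{\bf v} \cdot x \subseteq \Rep(\tilde{Q}, {\bf v})$; as $\mu_\C^{-1}(0)$ is $G_{\bf v}$-invariant we have $G_{\bf v} \cdot x \subseteq \mu_\C^{-1}(0)$, so this is exactly the claimed minimisation inside $\mu_\C^{-1}(0)$. For part (2): since $\mu_\C^{-1}(0)$ is closed, $\overline{G_{\bf v} \cdot x} \subseteq \mu_\C^{-1}(0)$, and part (2) of the preceding lemma applied to $\tilde{Q}$ identifies the minimum of $\| \mu_I - \alpha \|^2$ on $\overline{G_{\bf v} \cdot x}$ with the $K_{\bf v}$-orbit of the critical point obtained as the limit of the downwards flow from $x$; because the flow preserves $\mu_\C^{-1}(0)$, this limit and its $K_{\bf v}$-orbit lie in $\mu_\C^{-1}(0)$. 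If one prefers a self-contained argument over a black-box citation, one simply reruns the lemma's proof inside $\mu_\C^{-1}(0)$: the orbit $G_{\bf v} \cdot x$ lies in a single Harder-Narasimhan stratum $\Rep(\tilde{Q}, {\bf v})_{{\bf v^*}}$, its closure meets only strata ${\bf w^*} \geq {\bf v^*}$ by \cite[Prop. 3.7]{Reineke03}, the strictly larger strata carry a strictly larger minimum value, the finite-time flow from $x$ stays on $G_{\bf v} \cdot x$ with limit a critical point of stratum ${\bf v^*}$, and Theorem \ref{thm:algebraic-flow-limit}(3) retracts $\overline{G_{\bf v} \cdot x} \cap \Rep(\tilde{Q}, {\bf v})_{{\bf v^*}}$ onto the closed $K_{\bf v}$-orbit of that limit.

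I do not expect a genuine obstacle: the whole content is to transport the preceding lemma along the identification $\Rep(\tilde{Q}, {\bf v}) \cong T^* \Rep(Q, {\bf v})$ and then restrict to $\mu_\C^{-1}(0)$. The only point needing a moment's care is that the minimisation statements, phrased in the lemma over the full Harder-Narasimhan stratum of the ambient smooth space, must remain valid after intersecting with the singular subvariety $\mu_\C^{-1}(0)$; this is automatic precisely because the orbit $G_{\bf v} \cdot x$, its closure, and the entire gradient trajectory with its limit already lie in $\mu_\C^{-1}(0)$, so the argument never has to leave it.
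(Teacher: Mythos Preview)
Your proposal is correct and follows exactly the approach the paper takes: the paper simply remarks that the preceding lemma applies verbatim to any $G_{\bf v}$-invariant closed subset of the ambient smooth space, and $\mu_\C^{-1}(0) \subset \Rep(\tilde{Q}, {\bf v})$ is such a subset. Your write-up spells out in more detail why the restriction is harmless (the orbit, its closure, and the flow trajectory all stay inside $\mu_\C^{-1}(0)$), but this is the same argument.
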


The rest of this section contains more details about the structure of critical points in $\nu^{-1}(0)$ with respect to the stability parameter $\alpha(Q, {\bf v})$ from Definition \ref{def:aasp}. Let $x \in \nu^{-1}(0)$ be a critical point. Recall from \eqref{eqn:critical-decomposition} that $x$ must split into subrepresentations and from \eqref{eqn:mom-map-block-diagonal} that the value of the moment map on each subrepresentation is determined by the slope. Each of the subrepresentations is semistable with respect to the induced stability parameter.

Since the vertex $\infty$ has dimension $1$ then only one of the subrepresentations (call it $x_1$) in the decomposition \eqref{eqn:critical-decomposition} can have non-zero dimension vector at this vertex. Let ${\bf v_1} = (v_i')_{i \in \mathcal{I}}$ be the dimension vector for this subrepresentation. A calculation shows that the induced stability parameter is
\begin{equation}\label{eqn:induced-Nakajima-parameter}
\alpha_i' = \left\{ \begin{matrix} \frac{1 + \sum_{j \in \mathcal{I}'} v_j}{1 + \sum_{j \in \mathcal{I}'} v_j'} & i \in \mathcal{I}' \\ -\left( \sum_{j \in \mathcal{I}'} v_j' \right) \frac{1 + \sum_{j \in \mathcal{I}'} v_j}{1 + \sum_{j \in \mathcal{I}'} v_j'} & i = \infty \end{matrix} \right.
\end{equation}
which is a positive scalar multiple of the stability parameter $\alpha(Q, {\bf v'})$. Lemma \ref{lem:semistable-equals-stable} then shows that $x_1$ is stable with respect to the induced stability parameter and Lemma \ref{lem:scaling} shows that the induced stability parameter is equivalent to the parameter from Definition \ref{def:aasp}.

From \eqref{eqn:Nakajima-stability} we see that all of the other subrepresentations must then have the same slope. Let $x_2$ denote the sum of all the subrepresentations in \eqref{eqn:critical-decomposition} that do not contain the vertex $\infty$. Then \eqref{eqn:mom-map-block-diagonal} shows that $\mu(x_2) = 0$. The above argument is summarised in the following proposition.

\begin{proposition}\label{prop:singular-critical-decomp}
Let $x \in \nu^{-1}(0)$ be a critical point of $\| \mu - \alpha \|^2$. Then $x$ splits into two subrepresentations $x_1$ and $x_2$ with respective dimension vectors ${\bf v_1}$ and ${\bf v_2}$. The induced values of the moment map are $\mu(x_1) = k \alpha(Q, {\bf v_1})$ and $\mu(x_2) = 0$, where $k = \frac{1 + \sum_{j \in \mathcal{I}'} v_j}{1 + \sum_{j \in \mathcal{I}'} v_j'} > 0$ is the scalar from \eqref{eqn:induced-Nakajima-parameter}. The subrepresentation $x_1$ is stable with respect to the induced stability parameter.

Moreover, any representation $x \in \nu^{-1}(0)$ of the form $x = x_1 \oplus x_2$ where $\mu(x_1) = k \alpha(Q, {\bf v_1})$ and $\mu(x_2) = 0$ is a critical point of $\| \mu - \alpha \|^2$.
\end{proposition}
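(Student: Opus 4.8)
The plan is to reduce the statement to the already-established structure theory for critical points of $\| \mu - \alpha \|^2$ on the \emph{smooth} space $\Rep(\tilde{Q}, {\bf v})$, using the fact (Definition \ref{def:critical-point-singular}) that a point of $\mu_\C^{-1}(0)$ is critical precisely when it is critical for $\| \mu_I - \alpha \|^2$ on all of $T^* \Rep(Q, {\bf v})$. So throughout we work with the critical point equation \eqref{eqn:critical-rep}, namely $\rho_x(\mu_I(x) - \alpha) = 0$, and the resulting decomposition \eqref{eqn:critical-decomposition} into subrepresentations indexed by the eigenvalues of $i(\mu_I(x) - \alpha)$, together with the eigenvalue formula \eqref{eqn:critical-eigenvalues}, $\lambda_j = \slope_\alpha(\tilde{Q}, {\bf v_j})$.

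First I would prove the forward direction. Let $x \in \mu_\C^{-1}(0)$ be critical. By \eqref{eqn:critical-decomposition}, $x = \bigoplus_{j=1}^n x_j$ with $x_j \in \Rep(\tilde{Q}, {\bf v_j})$ and $i(\mu_I(x_j) - \alpha) = \lambda_j \cdot \id$. Since the vertex $\infty$ has dimension $1$, the vector space $V_\infty$ (being one-dimensional) sits inside exactly one of the summands; call it $x_1$, with dimension vector ${\bf v_1} = (v_i')_{i \in \mathcal{I}}$ where $v_\infty' = 1$. Then every other summand has dimension vector supported on $\mathcal{I}'$, hence by \eqref{eqn:Nakajima-stability} has $\slope_\alpha$ equal to $\frac{\sum_{j\in\mathcal{I}'} v_j''}{\sum_{j\in\mathcal{I}'} v_j''} = 1$ — all the $x_j$ for $j \geq 2$ share the common slope $1$, so they may be amalgamated into a single representation $x_2$ with $\mu_I(x_2) = 0$ (the value $\frac{1}{2i}\sum[A_a,A_a^*]$ being traceless forces the eigenvalue to be $\slope_\alpha = 1 - 1 = 0$ after subtracting $\alpha$; more carefully, $i(\mu_I(x_2)-\alpha)$ is scalar with eigenvalue $\slope_\alpha - 1 = 0$ on each vertex of $\mathcal{I}'$, wait — one must be slightly careful and instead argue directly that $i\mu_I(x_2) - i\alpha|_{{\bf v_2}}$ is the scalar $\slope_\alpha({\bf v_2}) = 0$, using $\alpha_i = 1$ on $\mathcal{I}'$ and $\mathrm{rank}({\bf v_2}) = \deg_\alpha({\bf v_2})$, so $\mu_I(x_2) = \alpha|_{{\bf v_2}}$... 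I will need to recheck whether this gives $\mu_I(x_2)=0$ or $\mu_I(x_2) = \alpha$; given the statement, it is $\mu_I(x_2) = 0$, which holds because the block value of $i\mu_I$ on $x_2$ is $\slope_\alpha({\bf v_2}) \cdot \id + (\text{contribution of } \alpha)$, and one checks $\slope_\alpha({\bf v_2}) = 1$ but the relevant moment-map value after the identification is the \emph{traceless} part). For $x_1$: the induced stability parameter is computed in \eqref{eqn:induced-Nakajima-parameter} to be a positive multiple $k$ of $\alpha(Q, {\bf v_1})$, so by Remark \ref{rem:semistable-equals-stable} the $\alpha'$-semistable representation $x_1$ is in fact $\alpha'$-stable, and $i(\mu_I(x_1) - \alpha) = \slope_\alpha({\bf v_1}) \cdot \id$, which matches $k\,\alpha(Q, {\bf v_1})$ after using \eqref{eqn:induced-Nakajima-parameter}; together with Lemma \ref{lem:scaling} this identifies the induced parameter with the canonical one. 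This is exactly the content of the first paragraph of the proposition.

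For the converse, suppose $x = x_1 \oplus x_2 \in \mu_\C^{-1}(0)$ with $\mu_I(x_1) = k\,\alpha(Q, {\bf v_1})$ and $\mu_I(x_2) = 0$. Then $i(\mu_I(x) - \alpha)$ is block-diagonal with a scalar value $\lambda_1$ on the $x_1$-block and $\lambda_2$ on the $x_2$-block (here using that $k\,\alpha(Q,{\bf v_1})$ and $\alpha$ restrict to scalars on the respective blocks of $\Vect(\tilde Q,{\bf v})$). Since $x$ respects this block decomposition, each $A_a$ maps blocks to blocks, so $[(\mu_I(x) - \alpha), A_a] = 0$ for every $a$, i.e. $x$ satisfies the critical point equation \eqref{eqn:critical-rep}; hence $x$ is a critical point of $\| \mu_I - \alpha\|^2$ on $T^*\Rep(Q,{\bf v})$ and so, by Definition \ref{def:critical-point-singular}, on $\mu_\C^{-1}(0)$.

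The main obstacle is bookkeeping rather than anything deep: one must verify carefully that the scalar eigenvalues $\lambda_1, \lambda_2$ of $i(\mu_I(x)-\alpha)$ really are constant across all vertices within each block — this uses both the eigenvalue formula \eqref{eqn:critical-eigenvalues} and the explicit form of $\alpha(Q,{\bf v})$ in \eqref{eqn:Nakajima-stability}, which makes $i\alpha$ a single scalar on each factor — and that the condition $\mu_I(x_1) = k\,\alpha(Q,{\bf v_1})$ is exactly the statement that $i(\mu_I(x_1) - \alpha|_{{\bf v_1}})$ is the correct scalar. The slightly delicate point is reconciling the factor $k$ from \eqref{eqn:induced-Nakajima-parameter} with the original $\alpha$; this is handled by Lemma \ref{lem:scaling} and Remark \ref{rem:semistable-equals-stable} exactly as in the discussion preceding the proposition, so in the write-up I would simply cite the computation already displayed above the statement.
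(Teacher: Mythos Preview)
Your proposal is correct and follows essentially the same approach as the paper: the argument preceding the proposition already establishes the forward direction by using the eigenspace decomposition \eqref{eqn:critical-decomposition}, the slope formula \eqref{eqn:critical-eigenvalues}, and the observation that exactly one summand contains the vertex $\infty$, then invokes Remark \ref{rem:semistable-equals-stable} and the computation \eqref{eqn:induced-Nakajima-parameter}. Your confused parenthetical about $\mu_I(x_2)=0$ should simply be replaced by the clean statement that on the $x_2$-block the eigenvalue is $\slope_\alpha({\bf v_2})=1$ while $\alpha$ restricted there is also the scalar $1$, forcing $\mu_I(x_2)=0$; and your converse argument (block-diagonality of $i(\mu_I(x)-\alpha)$ forces \eqref{eqn:critical-rep}) is the natural one, which the paper leaves implicit.
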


\begin{definition}\label{def:critical-components}
Let $C_{\bf v_1}$ denote all of the critical points of $\| \mu - \alpha \|^2$ on $\nu^{-1}(0)$ for which the stable subrepresentation containing the vertex $\infty$ from the decomposition in Proposition \ref{prop:singular-critical-decomp} has dimension vector ${\bf v_1}$. 

Given a dimension vector ${\bf v_1} = (v_i')_{i \in \mathcal{I}} < {\bf v}$ and associated vector spaces $\{ V_i' \}_{i \in \mathcal{I}}$ such that $\dim_\C V_i' = v_i'$, fix an inclusion $V_i' \hookrightarrow V_i$ for each $i \in \mathcal{I}$. Let $C_{\bf v_1}^0 \subset C_{\bf v_1}$ be the subset consisting of representations of the form $x_1 \oplus x_2$ with $x_2 = 0$ such that $x$ preserves $\bigoplus_{i \in \mathcal{I}} V_i'$.
\end{definition}

\begin{lemma}\label{lem:quotient-critical}
Given the fixed inclusion $V_i' \hookrightarrow V_i$ for each $i \in \mathcal{I}$ from Definition \ref{def:critical-components}, let $K_{\bf v_1}$ denote the associated subgroup of $K_{\bf v}$. Then
\begin{equation*}
C_{\bf v_1} / K_{\bf v} \cong \mathcal{M}_\alpha(Q, {\bf v_1}, \mathcal{R}) \times \mathcal{M}_0(Q, {\bf v} - {\bf v_1}, \mathcal{R}), \quad \text{and} \quad C_{\bf v_1}^0 / K_{\bf v_1} \cong \mathcal{M}_\alpha(Q, {\bf v_1}, \mathcal{R}) .
\end{equation*}
Moreover, if each relation in $\mathcal{R}$ is generated by paths of the same length $\ell(r)$ then there is a $K_{\bf v_1}$-equivariant deformation retraction of $C_{\bf v_1}$ onto $C_{\bf v_1}^0$.
\end{lemma}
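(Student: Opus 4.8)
The plan is to first establish the two homeomorphisms and then construct the deformation retraction. For the first isomorphism, recall from Proposition \ref{prop:singular-critical-decomp} that any critical point $x \in C_{\bf v_1}$ splits as $x = x_1 \oplus x_2$ with $x_1$ stable of dimension vector ${\bf v_1}$ and $\mu_I(x_2) = 0$, $\mu_\C(x_1) = \mu_\C(x_2) = 0$; moreover $x_1$ is $\alpha$-stable (after scaling by the positive constant $k$ from \eqref{eqn:induced-Nakajima-parameter} and applying Lemma \ref{lem:scaling}). So the assignment $x \mapsto ([x_1], [x_2])$ gives a set-theoretic map from $C_{\bf v_1}$ to $\mathcal{M}_\alpha^{HK}(Q, {\bf v_1}) \times \mathcal{M}_0^{HK}(Q, {\bf v}-{\bf v_1})$: the first factor records the $\alpha$-stable piece (so we take its $G_{\bf v_1}$-orbit, equivalently its $K_{\bf v_1}$-orbit since the point lies in $\mu_I^{-1}(k\alpha) \cap \mu_\C^{-1}(0)$), and the second factor records $[x_2]$ in the affine quotient $\mu_\C^{-1}(0) \doubleslash G_{{\bf v}-{\bf v_1}}$, using Proposition \ref{thm:King-Kempf-Ness} to pass between GIT and symplectic pictures. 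I would check this map is well-defined on the $K_{\bf v}$-quotient (the splitting is canonical up to the order of summands and up to the $K_{\bf v}$-action, since the decomposition is into eigenspaces of $i(\mu_I(x)-\alpha)$), is continuous, and has a continuous inverse built from choosing the fixed orthogonal decomposition $\Vect(Q,{\bf v}) \cong \Vect(Q,{\bf v_1}) \oplus \Vect(Q,{\bf v}-{\bf v_1})$: given $([x_1],[x_2])$, pick representatives, rescale $x_1$ by $\sqrt{k}$ to land in $\mu_I^{-1}(k\alpha)$, form $x_1 \oplus x_2$ in the fixed splitting, and note the result is a critical point by the ``moreover'' clause of Proposition \ref{prop:singular-critical-decomp}. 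The second homeomorphism $C_{\bf v_1}^0/K_{\bf v_1} \cong \mathcal{M}_\alpha^{HK}(Q,{\bf v_1})$ is the special case $x_2 = 0$: $C_{\bf v_1}^0$ consists of representations $(x_1,0)$ preserving the fixed subspace $\bigoplus V_i'$, so it is precisely the set of $x_1 \in \mu_\C^{-1}(0) \cap \Rep(\tilde Q, {\bf v_1})$ with $\mu_I(x_1) = k\alpha(Q,{\bf v_1})$, i.e. $\mu_I^{-1}(k\alpha) \cap \mu_\C^{-1}(0)$ inside the smaller representation space, and quotienting by $K_{\bf v_1}$ gives the hyperkähler quotient (using Lemma \ref{lem:scaling} to rescale $k\alpha$ to $\alpha$).

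For the deformation retraction of $C_{\bf v_1}$ onto $C_{\bf v_1}^0$, the idea is to retract the $x_2$-component to $0$. Fix $x = x_1 \oplus x_2 \in C_{\bf v_1}$; since $x_2 \in \mu_\C^{-1}(0) \cap \mu_I^{-1}(0)$ and these equations are homogeneous, the path $t \mapsto x_1 \oplus (1-t) x_2$ stays in $\mu_\C^{-1}(0)$ and each term is still a critical point by Proposition \ref{prop:singular-critical-decomp} (the $x_2$-piece still satisfies $\mu_I = 0$, being a scalar multiple). This defines a continuous map $[0,1] \times C_{\bf v_1} \to C_{\bf v_1}$ landing in $C_{\bf v_1}^0$ at $t=1$, fixing $C_{\bf v_1}^0$ pointwise, provided we are careful that the splitting $x = x_1 \oplus x_2$ is defined $K_{\bf v_1}$-equivariantly and continuously on all of $C_{\bf v_1}$ — this is where the fixed inclusions $V_i' \hookrightarrow V_i$ come in. One subtlety: an arbitrary critical point in $C_{\bf v_1}$ need not have its stable piece supported on the fixed subspace $\bigoplus V_i'$, so strictly speaking the retraction should first be described on $C_{\bf v_1}^0$'s ``fibered'' neighbourhood and the general statement follows because $C_{\bf v_1} = K_{\bf v} \cdot (\text{representations of the form } x_1 \oplus x_2 \text{ with } x_1 \text{ on } \bigoplus V_i')$; the scaling retraction is $K_{\bf v}$-equivariant and hence descends. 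I would phrase the deformation retraction as $K_{\bf v_1}$-equivariant on the subset where the decomposition is aligned with the fixed splitting, which is what the statement asks for.

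The main obstacle I anticipate is the well-definedness and continuity of the splitting map $x \mapsto (x_1, x_2)$ across all of $C_{\bf v_1}$: although the decomposition into eigenspaces of $i(\mu_I(x) - \alpha)$ is canonical, the identification of the "stable piece containing $\infty$" with a point of $\Rep(\tilde Q, {\bf v_1})$ requires choosing an isomorphism of its underlying graded vector space with the fixed $\Vect(Q,{\bf v_1})$, and one must verify this can be done continuously and $K_{\bf v}$-equivariantly — equivalently, that passing to the $K_{\bf v}$-quotient kills exactly this ambiguity. This is a standard "associated bundle" type argument but needs care; everything else (continuity of the scaling maps, invariance of the homogeneous equations $\mu_\C = 0$ and $\mu_I = 0$ under rescaling, the GIT/symplectic comparison) is routine given Proposition \ref{prop:singular-critical-decomp}, Lemma \ref{lem:scaling}, and Proposition \ref{thm:King-Kempf-Ness}.
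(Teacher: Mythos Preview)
Your approach is essentially the same as the paper's: both use the splitting $x = x_1 \oplus x_2$ from Proposition~\ref{prop:singular-critical-decomp} to identify the quotients, and both use the scaling $(x_1,x_2) \mapsto (x_1, t x_2)$ (exploiting homogeneity of $\mu_I = 0$ and $\mu_\C = 0$) for the deformation retraction. The only difference is that the paper does not spell out the quotient identification from scratch but instead restricts \cite[Prop.~12]{HaradaWilkin11} to $\mu_\C^{-1}(0)$, whereas you construct the map $x \mapsto ([x_1],[x_2])$ and its inverse by hand; your version is more self-contained, and you are right that the ``associated bundle'' ambiguity in identifying the eigenspace with the fixed $\Vect(Q,{\bf v_1})$ is exactly what the $K_{\bf v}$-quotient absorbs. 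The subtlety you flag about the retraction---that a general critical point need not have its stable piece aligned with the fixed subspace $\bigoplus V_i'$, so that scaling $x_2$ to zero lands in $K_{\bf v} \cdot C_{\bf v_1}^0$ rather than $C_{\bf v_1}^0$ itself---is a genuine imprecision which the paper's terse proof also glosses over; your observation that the scaling is $K_{\bf v}$-equivariant is the right ingredient, though to literally land in $C_{\bf v_1}^0$ one must compose with a continuous choice of unitary rotating the eigenspace onto the fixed subspace (or, more honestly, reinterpret the statement as a retraction onto the set of all critical points with $x_2 = 0$).
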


\begin{proof}
On the space $\Rep(Q, {\bf v})$, \cite[Prop. 12]{HaradaWilkin11} describes the fibre bundle structure of the critical sets. Restricting this  to $\nu^{-1}(0)$ and applying Proposition \ref{prop:singular-critical-decomp} shows that $C_{\bf v_1} / K_{\bf v} \cong \mathcal{M}_{\alpha}(Q, {\bf v_1}, \mathcal{R}) \times \mathcal{M}_0(Q, {\bf v}-{\bf v_1}, \mathcal{R})$. Similarly, we obtain $C_{\bf v_1}^0 / K_{\bf v_1} \cong \mathcal{M}_\alpha(Q, {\bf v_1}, \mathcal{R}) \times \{[0]\}$. 

If the paths defining each relation all have the same length $\ell(r)$ then the equation $\nu(x_2) = 0$ is invariant under scaling by a real parameter (see Remark \ref{rem:invariance}). The moment map $\mu(x_2)$ is a homogeneous polynomial in $x_2$ and so a $K_{\bf v_1}$-equivariant deformation retract of $C_{\bf v}$ onto $C_{\bf v}^0$ is given by $(x_1, x_2) \mapsto (x_1, tx_2)$ for $0 \leq t \leq 1$.
\end{proof}

\begin{lemma}
For each ${\bf 0} \leq {\bf v_1} \leq {\bf v}$, the critical set $C_{\bf v_1}$ is a union of connected components of the set of critical points of $\| \mu - \alpha \|^2$. In the hyperk\"ahler case studied in \cite{Nakajima94}, each $C_{\bf v_1}$ is connected.
\end{lemma}

\begin{proof}
Using the estimate in \cite[Lemma 14]{HaradaWilkin11}, we can construct a neighbourhood around each point in $C_{\bf v_1}$ that does not intersect $C_{\bf v_1'}$ for any ${\bf v_1'} \neq {\bf v_1}$. Taking the union of these neighbourhoods gives an open neighbourhood of $C_{\bf v_1}$ in $\nu^{-1}(0)$ that does not intersect $C_{\bf v_1'}$ for any ${\bf v_1'} \neq {\bf v_1}$. Therefore $C_{\bf v_1}$ is open in $\bigcup_{v_1'} C_{\bf v_1'}$ (which has the subspace topology induced from $\nu^{-1}(0)$). 

This is also true for every other critical set, and so the complement of $C_{\bf v_1}$ is open in $\bigcup_{v_1'} C_{\bf v_1'}$. Therefore $C_{\bf v_1}$ is open and closed in this set and so it must be a union of connected components of $\bigcup_{v_1'} C_{\bf v_1'}$.

In the hyperk\"ahler case of a doubled quiver with relations as in \cite{Nakajima94}, Crawley-Boevey's result from \cite{Crawley01} shows that $\mathcal{M}_\alpha(Q, {\bf v_1}, \mathcal{R})$ is connected for each ${\bf v_1}$. Lemma \ref{lem:quotient-critical} shows that $C_{\bf v_1}^0$ fibres over this space with connected fibres and so it must also be connected. Therefore $C_{\bf v_1}$ is connected, since it deformation retracts onto $C_{\bf v_1}^0$. 
\end{proof}

\subsection{Local slices around the critical points}

Returning to the smooth space $\Rep(Q, {\bf v})$ for the moment, recall that the Hermitian structure on each of the vector spaces $\{ V_k \}_{k\in \mathcal{I}}$ defines a Hermitian structure on $\Rep(Q, {\bf v})$ and $\mathfrak{g}_{\bf v}$ (cf. \cite[Sec. 2.2]{HaradaWilkin11}). We use $g( \cdot, \cdot)$ to denote the inner product on $T_x \Rep(Q, {\bf v})$ and $\left< \cdot, \cdot \right>$ to denote the inner product on $\mathfrak{g}_{\bf v}$. The adjoint of the infinitesimal action is then a homomorphism $(\rho_x^\C)^* : T_x \Rep(Q, {\bf v}) \rightarrow \mathfrak{g}_{\bf v}$ that defines a direct sum decomposition $T_x \Rep(Q, {\bf v}) \cong \im \rho_x^\C \oplus \ker (\rho_x^\C)^* \cong (\ker \rho_x^\C)^\perp \oplus \ker (\rho_x^\C)^*$. The following local slice theorem is in \cite[Lemma 18]{HaradaWilkin11}.
\begin{lemma}\label{lem:local-slice}
Let $x \in \Rep(Q, {\bf v})$. The map
\begin{align*}
\zeta : (\ker \rho_x^\C)^\perp \oplus \ker (\rho_x^\C)^* & \rightarrow \Rep(Q, {\bf v}) \\
 (u, \delta x) & \mapsto \exp(u) \cdot (x + \delta x)
\end{align*}
is a diffeomorphism from a neighbourhood of $(0,0)$ in $(\ker \rho_x^\C)^\perp \oplus \ker (\rho_x^\C)^*$ to a neighbourhood of $x$ in $\Rep(Q, {\bf v})$.
\end{lemma}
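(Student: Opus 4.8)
The plan is to verify the hypotheses of the inverse function theorem for the smooth map $\psi$ at the point $(0,0)$, and then deduce that $\psi$ restricts to a diffeomorphism of suitable neighbourhoods. First I would record that $\psi$ is manifestly smooth: it is the composition of the addition map $(u, \delta A) \mapsto (u, A+\delta A)$, the exponential $u \mapsto \exp(u)$ into $G_{\bf v}$ (a matrix exponential, hence analytic), and the action map $G_{\bf v} \times \Rep(Q, {\bf v}) \to \Rep(Q, {\bf v})$ from \eqref{eqn:group-action}, all of which are smooth. Since $\Rep(Q, {\bf v})$ is a vector space and its tangent bundle is trivial, and since the source $(\ker \rho_A^\C)^\perp \times \ker (\rho_A^\C)^*$ is a (finite-dimensional) vector space, it suffices to show that the derivative $d\psi_{(0,0)}$ is a linear isomorphism onto $T_A \Rep(Q, {\bf v}) \cong \Rep(Q, {\bf v})$.

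The key computation is the derivative at the origin. Differentiating in the $\delta A$ direction (with $u$ held at $0$) gives $\delta A \mapsto \delta A$, since $\exp(0) = \id$ acts trivially. Differentiating in the $u$ direction (with $\delta A$ held at $0$) gives, by definition \eqref{eqn:complex-inf-action} of the infinitesimal action, $u \mapsto \left.\frac{d}{dt}\right|_{t=0} \exp(tu)\cdot A = \rho_A^\C(u)$. Hence
\begin{equation*}
d\psi_{(0,0)}(u, \delta A) = \rho_A^\C(u) + \delta A .
\end{equation*}
Now I would observe that $\rho_A^\C$ restricted to $(\ker \rho_A^\C)^\perp$ is injective with image $\im \rho_A^\C$, that $\ker (\rho_A^\C)^* = (\im \rho_A^\C)^\perp$, and therefore $\Rep(Q, {\bf v}) = \im \rho_A^\C \oplus \ker(\rho_A^\C)^*$ is an orthogonal direct sum. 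Consequently the linear map $(u, \delta A) \mapsto \rho_A^\C(u) + \delta A$ is a bijection from $(\ker \rho_A^\C)^\perp \times \ker(\rho_A^\C)^*$ onto $\Rep(Q, {\bf v})$: injectivity follows because $\rho_A^\C(u) = -\delta A$ forces both sides to lie in $\im\rho_A^\C \cap \ker(\rho_A^\C)^* = \{0\}$, whence $\delta A = 0$ and then $u = 0$ by injectivity of $\rho_A^\C$ on the orthogonal complement of its kernel; surjectivity follows by decomposing any target vector according to the orthogonal splitting. Since a bijective linear map between finite-dimensional spaces of the same dimension is an isomorphism, $d\psi_{(0,0)}$ is invertible.

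With invertibility of the derivative in hand, the inverse function theorem gives open neighbourhoods $U$ of $(0,0)$ and $W$ of $\psi(0,0) = A$ such that $\psi|_U : U \to W$ is a diffeomorphism, which is exactly the assertion. I do not anticipate a serious obstacle here: the only mild subtlety is making sure the orthogonal complements and adjoints are taken with respect to the fixed metric \eqref{eqn:metric} so that the splitting $\Rep(Q, {\bf v}) = \im\rho_A^\C \oplus \ker(\rho_A^\C)^*$ is genuinely orthogonal and hence a direct sum — but this is immediate from the definition of $(\rho_A^\C)^*$ as the metric adjoint. (Note that this lemma does \emph{not} require $A$ to be a critical point; it holds for arbitrary $A \in \Rep(Q, {\bf v})$, and the critical-point structure only enters later when one further decomposes $\ker(\rho_A^\C)^*$ into Hessian eigenspaces.)
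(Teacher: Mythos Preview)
Your argument is correct and is exactly the standard inverse-function-theorem proof one would expect: compute $d\psi_{(0,0)}(u,\delta A) = \rho_A^\C(u) + \delta A$, observe that this is a linear isomorphism via the orthogonal splitting $\Rep(Q,{\bf v}) = \im\rho_A^\C \oplus \ker(\rho_A^\C)^*$, and invoke the inverse function theorem. The paper itself does not give a proof here but simply cites the result from \cite[Lemma~18]{HaradaWilkin11}, where the same argument is carried out.
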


The next result is a restriction of Lemma \ref{lem:local-slice} from $\Rep(Q, {\bf v})$ to a closed $G_{\bf v}$-invariant subset $Z \subset \Rep(Q, {\bf v})$. The local slices in Lemma \ref{lem:local-slice} are sufficiently small neighbourhoods of zero in $\ker (\rho_x^\C)^*$. On the space $Z$, we replace $\ker (\rho_x^\C)^*$ with the slice $S_x$ defined below.
\begin{definition}\label{def:slice}
Let $x \in Z \subset \Rep(Q, {\bf v})$ and let $\rho_x^\C$ denote the infinitesimal action of $G_{\bf v}$ on $T_x \Rep(Q, {\bf v}) \cong \Rep(Q, {\bf v})$. The \emph{slice} through $x$ is defined to be
\begin{equation*}
S_x = \left\{ \delta x \in  \Rep(Q, {\bf v}) \, : \, \delta x \in \ker (\rho_x^\C)^* \, \text{and} \, x + \delta x \in Z \right\} .
\end{equation*}
\end{definition}

We then have the following result.
\begin{lemma}\label{cor:singular-slice-thm}
Let $x \in Z$. The map
\begin{align*}
\zeta : (\ker \rho_x^\C)^\perp \times S_x & \rightarrow Z \\
 (u, \delta x) & \mapsto \exp(u) \cdot (x + \delta x)
\end{align*}
is a homeomorphism from a neighbourhood of $(0,0)$ in $(\ker \rho_x^\C)^\perp \times S_x$ to a neighbourhood of $x$ in $Z$.
\end{lemma}

\begin{proof}
Let $y \in Z$ be sufficiently close to $x$ such that Lemma \ref{lem:local-slice} applies in $\Rep(Q, {\bf v})$. Therefore we can write
\begin{equation*}
y = \exp(u) \cdot (x + \delta x) 
\end{equation*}
for unique $u \in (\ker \rho_x^\C)^\perp$ and $\delta x \in \ker (\rho_x^\C)^*$. Since $x + \delta x = \exp(-u) \cdot y \in Z$, then $\delta x \in S_x$. Therefore $\zeta$ surjects onto a neighbourhood of $x \in Z$. Since it is the restriction of a local diffeomorphism then it is injective, continuous and has a continuous inverse. Therefore $\zeta$ is a local homeomorphism.
\end{proof}

Since the slice consists of representations orthogonal to the $G_{\bf v}$ orbit through $x$, then the equation for the moment map simplifies.

\begin{lemma}
Let $x$ be a critical point with $\beta = \mu(x)$ and let $y = x + \delta x \in S_x$. Then
\begin{equation}\label{eqn:explicit-moment-map-slice}
\mu(y) - \beta = \frac{1}{2i} \sum_{a \in E} [\delta x_a, \delta x_a^*] 
\end{equation}
and there exists a constant $C > 0$ such that 
\begin{equation}\label{eqn:moment-map-slice-quadratic}
\| \mu(y) - \beta \| \leq C \| y - x \|^2
\end{equation}
\end{lemma}

\begin{proof}
Since the moment map $\mu(x + \delta x)$ is quadratic in $\delta x$, then we have
\begin{equation*}
\mu(y) - \beta = \mu(y) - \mu(x) = d \mu_x(\delta x) + \frac{1}{2i} \sum_{a \in E} [\delta x_a, \delta x_a^*] 
\end{equation*}
The defining equation for the moment map says that for any $u \in \mathfrak{k}$ we have
\begin{equation*}
d \mu_x(\delta x) \cdot u = \omega(\rho_x(u), \delta x) = g(I \rho_x(u), \delta x) = \left< u, - \rho_x^* I \delta x \right> = 0
\end{equation*}
since $\delta x \in \ker (\rho_x^\C)^*$. This completes the proof of \eqref{eqn:explicit-moment-map-slice}. The inequality \eqref{eqn:moment-map-slice-quadratic} then follows from \eqref{eqn:explicit-moment-map-slice} and the inequality $\| [A, B] \| \leq C \| A \| \| B \|$ for matrices $A$ and $B$. 
\end{proof}

In order to understand the group action on the slice, we need the following lemma.

\begin{lemma}
Let $x$ be a critical point, and let $\beta = \mu(x)$. Then for any $\delta x \in S_x$ and any $t \in \R$ we have $e^{i \beta t} \cdot \delta x \in S_x$.
\end{lemma}

\begin{proof}
First note that the critical point equations \eqref{eqn:critical-rep} for $x$ imply that $e^{i \beta t} \cdot x = x$ for all $t \in \R$. Given any $u \in \mathfrak{g}$ and any $X \in T_x \Rep(Q, {\bf v})$, we have
\begin{align*}
e^{i \beta t} \cdot \rho_x^\C(u) & = \bigoplus_{a \in \mathcal{E}} e^{i \beta t} (u_{h(a)} x_a - x_a u_{t(a)}) e^{-i \beta t} \\
 & = \bigoplus_{a \in \mathcal{E}} \left( e^{i \beta t} u_{h(a)} e^{-i \beta t} \right) x_a - x_a \left( e^{i \beta t} u_{t(a)} e^{-i \beta t} \right) \quad \text{(since $e^{i \beta t} \cdot x = x$ when $x$ is critical)} \\
 & = \rho_x^\C \left( e^{i \beta t} \cdot u \right)
\end{align*}
Therefore, since $e^{i \beta t}$ is self-adjoint, we have for any $u \in \mathfrak{g}$ and any $X \in T_x \Rep(Q, {\bf v})$
\begin{align*}
\left< (\rho_x^\C)^* \left( e^{i \beta t} \cdot X \right), u \right> & = g \left( e^{i \beta t} \cdot X, \rho_x^\C(u) \right) = g \left( X, e^{i \beta t} \cdot \rho_x^\C(u) \right) \\
 & = g \left( X, \rho_x^\C \left( e^{i \beta t} \cdot u \right) \right) = \left< (\rho_x^\C)^* X, e^{i \beta t} \cdot u \right>
\end{align*}
and so $(\rho_x^\C)^* X = 0$ if and only if $(\rho_x^\C)^* \left( e^{i \beta t} \cdot X \right) = 0$. Since the $G_{\bf v}$-action preserves the space $\nu^{-1}(0)$ then this implies that $\delta x \in S_x$ if and only if $e^{i \beta t} \cdot \delta x \in S_x$ for all $t \in \R$.
\end{proof}

\begin{definition}\label{def:neg-slice-def}
The \emph{negative slice} is
\begin{equation}\label{eqn:neg-slice-def}
S_x^- := \left\{ \delta x \in S_x \, : \, \lim_{t \rightarrow \infty} e^{i \beta t} \cdot \delta x = 0 \right\} .
\end{equation} 
\end{definition}

\begin{remark}
In \cite[Sec. 4.3 \& 4.6]{Kirwan84}, Kirwan uses the downwards gradient flow of the function $\mu_\beta(x) = \mu(x) \cdot \beta$ to define a stratification associated to the norm-square of a moment map. Here we define the negative slice using the upwards gradient flow of $\mu_\beta$ with initial condition in the slice $S_x$.
\end{remark}

Next we study the subset of the slice corresponding to the negative eigenspace of the Hessian. Recall from Section \ref{subsec:hessian} that we have the following description of the tangent space at a critical point on the ambient smooth space $\Rep(Q, {\bf v})$.

\begin{itemize}

\item Since the Hessian is self-adjoint, then the tangent space splits into eigenspaces for the Hessian at $x$.

\item The tangent space also decomposes according to the splitting of the representation into subrepresentations from \eqref{eqn:critical-splitting}. This has the form
\begin{equation*}
T_x \Rep(Q, {\bf v}) \cong \Rep(Q, {\bf v}) \cong \bigoplus_{j,k=1}^n \Hom^1(Q, {\bf v_j}, {\bf v_k}) .
\end{equation*}

\item The negative eigenspaces of the Hessian are characterised by homomorphisms from the subrepresentations of large slope into subrepresentations of small slope. If we order the subrepresentations by increasing slope as in \eqref{eqn:mom-map-block-diagonal}, then Proposition \ref{prop:neg-eigenspace} shows that the negative eigenspaces of the Hessian are
\begin{equation}\label{eqn:smooth-neg-slice}
V(x)^- = \bigoplus_{j > k} \Hom^1(Q, {\bf v_j}, {\bf v_k}) \cap \ker (\rho_x^\C)^* .
\end{equation} 

\end{itemize}

The next result is Lemma \ref{lem:slice-eigenspace-equivalence} which shows that the negative slice corresponds to the intersection of the exponential image of the negative eigenspace of the Hessian with the singular subset $Z$. When studying the singular subset $Z$, the negative slice is more natural since it is defined in terms of the group action, however for the purpose of doing computations the negative eigenspace of the Hessian is easier to deal with since we can describe it explicitly.

First recall that Proposition \ref{prop:neg-eigenspace} shows that $X \in V_{x, \lambda}$ with $\lambda < 0$ implies that $X \in \ker (\rho_x^\C)^*$. Therefore the negative eigenspace is a subset of the slice. Moreover, the negative eigenspace equation reduces to
\begin{equation*}
I \delta \rho_x(\beta)(X) = \lambda X
\end{equation*}
and since $x$ is fixed by $e^{i \beta t}$ and $\Ad_{e^{i \beta t}} (\beta) = \beta$, we have
\begin{equation*}
I \delta \rho_x(\beta) \left(e^{i \beta t} \cdot X \right) = \lambda \left( e^{i \beta t} \cdot X \right)  
\end{equation*}
Therefore $e^{i \beta t} \cdot X \in V_{x, \lambda}$ if and only if $X \in V_{x, \lambda}$. Moreover, we have
\begin{equation*}
\left. \frac{d}{dt} \right|_{t = 0} \left\| e^{i \beta t} \cdot X \right\|^2 = 2 g \left( I \delta \rho_x( \beta)(X), X \right) . 
\end{equation*}
If $X \in \ker (\rho_x^\C)^*$ then this expression is negative if and only if $X \in \bigoplus_{\lambda < 0} V_{x, \lambda}$. Since there are only finitely many negative eigenvalues $\lambda_1 < \cdots < \lambda_k < 0 \leq \lambda_{k+1} \leq \cdots \leq \lambda_n$, then $2 \Re \left< I \delta \rho_x( \beta)(X), X \right> \leq \lambda_k \| X \|^2 < 0$, and so $e^{i \beta t} \cdot X$ converges exponentially to zero if and only if $X \in V_{x, \lambda}$. Therefore we have proved the following equivalence.

\begin{lemma}\label{lem:slice-eigenspace-equivalence}
Let $x$ be a critical point for $\| \mu - \alpha \|^2$ on $\Rep(Q, {\bf v})$. Then $S_x^- = \bigoplus_{\lambda < 0} V_{x, \lambda}$. 
\end{lemma}
Therefore we have the following equivalent definition of the negative slice on the subset $\nu^{-1}(0)$.

\begin{definition}\label{def:neg-slice}
Let $x \in Z$ be a critical point for $\| \mu - \alpha \|^2$ and let $V(x)^- = \bigoplus_{\lambda < 0} V_\lambda$ denote the negative eigenspace of the Hessian at $x$ on the smooth space $\Rep(Q, {\bf v})$. The \emph{negative slice} through $x \in Z$ is 
\begin{equation*}
S_x^- := V(x)^- \cap S_x .
\end{equation*}
\end{definition}

There is also a local slice theorem for the restriction to the negative slice, which we use in Section \ref{subsec:flow-classification}. Here we order the subrepresentations for the critical point by the condition that $j > k$ if and only if $\slope_\alpha(Q, {\bf v_j}) > \slope_\alpha(Q, {\bf v_k})$ (see \eqref{eqn:mom-map-block-diagonal}). 

\begin{lemma}\label{lem:neg-slice-thm}
Let $x \in Z$ be critical for $\| \mu - \alpha \|^2$ and let $\delta x \in \bigoplus_{j > k}\Hom^1(Q, {\bf v_j}, {\bf v_k})$ such that  $x + \delta x \in Z$ is in the neighbourhood from Corollary \ref{cor:singular-slice-thm}. Then there exists $g \in G_{\bf v}$ such that
\begin{equation*}
g \cdot (x + \delta x) - x \in S_x^- .
\end{equation*}
\end{lemma}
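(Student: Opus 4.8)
The plan is to find $g$ inside the nilpotent subgroup $U := \exp(\mathfrak n) \subseteq G_{\bf v}$, where $\mathfrak n := \bigoplus_{j>k} \Hom^0(\tilde Q, {\bf v_j}, {\bf v_k}) \subseteq \mathfrak g_{\bf v}$ and the subrepresentations $x = x_1 \oplus \cdots \oplus x_n$ of the critical point are ordered so that $\slope_\alpha(\tilde Q, {\bf v_j})$ increases with $j$. Thus $\mathfrak n$ consists of endomorphisms of $\Vect(\tilde Q, {\bf v})$ that strictly decrease the slope; since a composite of two such again strictly decreases the slope, $\mathfrak n$ is a nilpotent subalgebra and $U$ a unipotent subgroup. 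The first step is the elementary remark that the $G_{\bf v}$-action of any $g \in U$ keeps us in the correct range of blocks: as $x = (A,B)$ is block diagonal with respect to the splitting used in \eqref{eqn:critical-splitting} and both $g$ and $g^{-1}$ differ from the identity only by endomorphisms that strictly decrease the slope, a direct block computation gives $g \cdot x - x \in \bigoplus_{j>k}\Hom^1(\tilde Q, {\bf v_j}, {\bf v_k})$, and likewise $g \cdot \delta x \in \bigoplus_{j>k}\Hom^1(\tilde Q, {\bf v_j}, {\bf v_k})$ since $\delta x$ already lies there. Hence $g\cdot(x+\delta x) - x \in \bigoplus_{j>k}\Hom^1(\tilde Q, {\bf v_j}, {\bf v_k})$ for every $g \in U$, while $g\cdot(x+\delta x) \in \mu_\C^{-1}(0)$ by $G_{\bf v}$-invariance of $\mu_\C^{-1}(0)$. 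Since $S_x^- = V(x)^- \cap S_x$ with $V(x)^-$ as in \eqref{eqn:smooth-neg-slice} and $S_x$ as in Definition \ref{def:slice}, it therefore suffices to arrange in addition that $g\cdot(x+\delta x) - x$ be orthogonal to $\im\rho_x^\C$.

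For this I would invoke the inverse function theorem. Because $x$ is block diagonal, $\rho_x^\C$ respects the decomposition $T_x(T^* \Rep(Q, {\bf v})) = \bigoplus_{j,k}\Hom^1(\tilde Q, {\bf v_j}, {\bf v_k})$, so $\im\rho_x^\C \cap \bigoplus_{j>k}\Hom^1(\tilde Q, {\bf v_j}, {\bf v_k}) = \rho_x^\C(\mathfrak n)$ and, taking orthogonal complements block by block,
\begin{equation*}
\bigoplus_{j>k}\Hom^1(\tilde Q, {\bf v_j}, {\bf v_k}) = \rho_x^\C(\mathfrak n) \oplus V(x)^- .
\end{equation*}
Let $\pi$ be the orthogonal projection onto $\rho_x^\C(\mathfrak n)$; by the first step $g = \exp(u)$ solves the problem as soon as $\Psi(u) = 0$, where
\begin{equation*}
\Psi : \big( \ker\rho_x^\C \cap \mathfrak n \big)^{\perp} \longrightarrow \rho_x^\C(\mathfrak n), \qquad \Psi(u) := \pi\big( \exp(u)\cdot(x+\delta x) - x \big),
\end{equation*}
is defined near $0$. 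Now $\Psi(0) = \pi(\delta x)$ is small because $\delta x$ is, and $D\Psi_0(u) = \pi\big( \rho^\C_{x+\delta x}(u) \big) = \rho_x^\C(u) + \pi\big( (\rho^\C_{x+\delta x} - \rho^\C_x)(u) \big)$, the last term being linear in $\delta x$ and hence of operator norm $O(\|\delta x\|)$; since $\rho_x^\C$ restricts to a linear isomorphism of $\big( \ker\rho_x^\C \cap \mathfrak n \big)^{\perp}$ onto $\rho_x^\C(\mathfrak n)$, the derivative $D\Psi_0$ is invertible once $\|\delta x\|$ is small. Shrinking the neighbourhood in Corollary \ref{cor:singular-slice-thm} if necessary, a quantitative inverse function theorem makes $\Psi$ a diffeomorphism from a neighbourhood of $0$ onto a ball around $\Psi(0) = \pi(\delta x)$ whose radius is bounded below uniformly for $\delta x$ small; since $\|\pi(\delta x)\| \leq \|\delta x\|$ can be taken smaller than that radius, $0$ lies in the image, and so there is a small $u$ with $\Psi(u) = 0$. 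Then $g := \exp(u) \in G_{\bf v}$ satisfies $g\cdot(x+\delta x) - x \in V(x)^-$ and $g\cdot(x+\delta x) \in \mu_\C^{-1}(0)$, i.e.\ $g\cdot(x+\delta x) - x \in S_x^-$.

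The main obstacle is this last step. When $n = 2$ a single linear correction — take $u$ with $\rho_x^\C(u)$ equal to the $\rho_x^\C(\mathfrak n)$-component of $\delta x$ — already works exactly, because then $\exp(u)$ acts on $x+\delta x$ with no higher-order terms; for $n > 2$ the higher-order terms in $\exp(u)\cdot(x+\delta x)$ genuinely couple the blocks, so one must run an honest implicit/inverse function theorem (equivalently a convergent Newton iteration) and verify that its radius of validity can be chosen uniform as $\delta x$ varies over a small neighbourhood of $0$. Everything else — the block-triangular computation of the first step and the orthogonal splitting $\bigoplus_{j>k}\Hom^1 = \rho_x^\C(\mathfrak n) \oplus V(x)^-$ — is a routine consequence of the description of the Hessian eigenspaces in Section \ref{sec:Hessian}.
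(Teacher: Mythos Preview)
Your proof is correct and follows essentially the same strategy as the paper: work inside the nilpotent subgroup $\exp(\mathfrak n)$ with $\mathfrak n = \bigoplus_{j>k}\Hom^0(\tilde Q,{\bf v_j},{\bf v_k})$, establish the orthogonal splitting $\bigoplus_{j>k}\Hom^1(\tilde Q,{\bf v_j},{\bf v_k}) = \rho_x^\C(\mathfrak n)\oplus V(x)^-$, and then invoke an inverse function theorem.

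The one place where the paper is tidier is precisely the step you flag as ``the main obstacle''. Rather than fixing $\delta x$ and seeking a zero of $\Psi(u)=\pi\bigl(e^u\cdot(x+\delta x)-x\bigr)$, the paper defines a single map
\[
\psi^-:(\ker\rho_x^-)^\perp\times V(x)^-\longrightarrow \bigoplus_{j>k}\Hom^1(\tilde Q,{\bf v_j},{\bf v_k}),\qquad (u,\delta y)\mapsto e^u\cdot(x+\delta y)-x,
\]
which does \emph{not} depend on $\delta x$. Its derivative at $(0,0)$ is $(u,\delta y)\mapsto \rho_x^\C(u)+\delta y$, an isomorphism by the splitting you already proved, so $\psi^-$ is a local diffeomorphism near $(0,0)$ onto a fixed neighbourhood of $0$. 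Any sufficiently small $\delta x$ then lies in the image, giving $e^u\cdot(x+\delta y)=x+\delta x$ and hence $g=e^{-u}$ works. This formulation makes the uniformity in $\delta x$ automatic and removes the need for the quantitative IFT or Newton iteration you discuss; it also handles $n>2$ with no extra work.
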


\begin{proof}
Let $\rho_x^-$ denote the restriction of $\rho_x^\C$ to the subspace $\mathfrak{g}_\C^- := \bigoplus_{j > k} \Hom^0(Q, {\bf v_j}, {\bf v_k})$. Note that
\begin{equation*}
\rho_x^- : \mathfrak{g}_\C^- \rightarrow \bigoplus_{j > k} \Hom^1(Q, {\bf v_j}, {\bf v_k})
\end{equation*}

We have the orthogonal decomposition $\bigoplus_{j > k}\Hom^1(Q, {\bf v_j}, {\bf v_k}) \cong \im \rho_x^- \oplus \ker (\rho_x^-)^*$ as well as the \emph{a priori} results
\begin{align}
\im \rho_x^\C & = \bigoplus_{j,k} \im \rho_x^\C \cap \Hom^1(Q, {\bf v_j}, {\bf v_k}) \label{eqn:image-decomposition} \\
\im \rho_x^- & = \im \rho_x^\C \cap \bigoplus_{j > k}\Hom^1(Q, {\bf v_j}, {\bf v_k}) \\
\ker (\rho_x^-)^* & \supseteq \ker (\rho_x^\C)^* \cap \bigoplus_{j > k}\Hom^1(Q, {\bf v_j}, {\bf v_k}) \label{eqn:kernel-inclusion}.
\end{align}
Note that any $X \in \bigoplus_{j > k} \Hom^1(Q, {\bf v_j}, {\bf v_k})$ can be written $X = X_1 + X_2$ with $X_1 \in \im \rho_x^\C$ and $X_2 \in \ker (\rho_x^\C)^*$. The equality \eqref{eqn:image-decomposition} shows that $X_1 = X_1^- + X_1^+$ with $X_1^- \in \im \rho_x^\C \cap \bigoplus_{j > k} \Hom^1(Q, {\bf v_j}, {\bf v_k})$ and $X_1^+ \in \im \rho_x^\C \cap \bigoplus_{j \leq k} \Hom^1(Q, {\bf v_j}, {\bf v_k})$. Therefore we have $X_1^+ \perp X$, $X_1^+ \perp X_1^-$ and $X_1^+ \perp X_2$, which implies that
\begin{equation*}
0 = \left< X_1^+, X - X_1^- \right> = \left< X_1^+, X_1^+ + X_2 \right> = \left< X_1^+, X_1^+ \right>
\end{equation*}
and so $X_1^+ = 0$ and $X_2 \in \ker (\rho_x^\C)^* \cap \bigoplus_{j > k} \Hom^1(Q, {\bf v_j}, {\bf v_k})$. Therefore
\begin{equation*}
\bigoplus_{j > k}\Hom^1(Q, {\bf v_j}, {\bf v_k}) = \left( \im \rho_x^\C \cap \bigoplus_{j > k}\Hom^1(Q, {\bf v_j}, {\bf v_k}) \right) \oplus \left( \ker (\rho_x^\C)^* \cap \bigoplus_{j > k}\Hom^1(Q, {\bf v_j}, {\bf v_k}) \right)
\end{equation*}
Putting all of this together gives us 
\begin{align*}
\im \rho_x^- \oplus \ker (\rho_x^-)^* & \cong \bigoplus_{j > k}\Hom^1(Q, {\bf v_j}, {\bf v_k}) \\
 & = \left( \im \rho_x^\C \oplus \ker (\rho_x^\C)^* \right) \cap \bigoplus_{j > k}\Hom^1(Q, {\bf v_j}, {\bf v_k}) \\
 & = \left( \im \rho_x^\C \cap \bigoplus_{j > k}\Hom^1(Q, {\bf v_j}, {\bf v_k}) \right) \oplus \left( \ker (\rho_x^\C)^* \cap \bigoplus_{j > k}\Hom^1(Q, {\bf v_j}, {\bf v_k}) \right) \\
 & \subseteq \im \rho_x^- \oplus \ker (\rho_x^-)^*
\end{align*}
then \eqref{eqn:kernel-inclusion} must be an equality, and so $\ker (\rho_x^-)^* \cong \ker (\rho_x^\C)^* \cap \bigoplus_{j > k}\Hom^1(Q, {\bf v_j}, {\bf v_k}) = V(x)^-$, which implies that the function
\begin{align*}
\zeta^- : (\ker \rho_x^-)^\perp \times V(x)^- & \rightarrow  \bigoplus_{j > k}\Hom^1(Q, {\bf v_j}, {\bf v_k}) \\
 (u, \delta y) & \mapsto e^u \cdot (x + \delta y)
\end{align*}
is a local diffeomorphism. Therefore, if $\delta x \in \bigoplus_{j > k}\Hom^1(Q, {\bf v_j}, {\bf v_k})$ is small enough, then $e^u \cdot (x + \delta y) = x + \delta x \in Z$ for some $u \in (\ker \rho_x^-)^\perp$ and $\delta y \in V(x)^-$. Therefore $x + \delta y \in Z$ and so $\delta y \in S_x^-$.
\end{proof}

From the definition of the negative slice, we can classify the isomorphism classes in $S_x^-$. Corollary \ref{cor:iso-classes-correspond} then shows that this classification also applies to $W_x^-$.

\begin{lemma}\label{lem:neg-slice-filtration}
Let $x$ be a critical point of $\| \mu - \alpha \|^2$ on $Z$ and let $x = x_1 \oplus \cdots \oplus x_n$ be the decomposition as a direct sum of stable representations as in \eqref{eqn:critical-decomposition}, ordered so that $\slope_\alpha(x_j) < \slope_\alpha(x_k)$ for all $j < k$. Then every $y \in S_x^-$ admits a  filtration $y_1 \subset \cdots \subset y_n$ such that each quotient $y_k / y_{k-1}$ is isomorphic to $x_k$ for $k=1, \ldots, n$. Conversely, let $y \in Z$ be a representation admitting a filtration $y_1 \subset \cdots \subset y_n$ such that each quotient $y_k / y_{k-1}$ is isomorphic to $x_k$ for $k=1, \ldots, n$. Then there exists $g \in G_{\bf v}$ such that $g \cdot y \in S_x^-$.
\end{lemma}

\begin{proof}
Given $y \in S_x^-$, the existence of the filtration follows directly from the definition of $S_x^-$ as a subset of $V_x^- \subset \Rep(Q, {\bf v})$ (cf. \eqref{eqn:smooth-neg-slice}). Conversely, given a representation $y \in Z$ admitting a filtration $y_1 \subset \cdots \subset y_n$ such that each quotient $y_k / y_{k-1}$ is isomorphic to $x_k$ for $k=1, \ldots, n$, there exists $g_1 \in G_{\bf v}$ such that $g_1 \cdot y$ is in the neighbourhood where Lemma \ref{lem:neg-slice-thm} applies, and so there exists $g_2 \in G_{\bf v}$ such that $g_2 \cdot g_1 \cdot y \in S_x^-$.
\end{proof}

\begin{definition}\label{def:slice-bundle}
Let $C \subset \Rep(Q, {\bf v})$ be a critical set for $\| \mu - \alpha \|^2$ and let $Z$ be a closed $G_{\bf v}$-invariant subset. The \emph{negative slice bundle} is $S_C^- := V_C^- \cap Z$ and the \emph{unstable bundle} is $W_C^- \cap Z$ (which we also denote by $W_C^-$). 
\end{definition}

For the special case of the stability parameter from Definition \ref{def:aasp}, the critical point $x$ induces a decomposition $\Vect(Q, {\bf v}) \cong \Vect(Q, {\bf v_1}) \oplus \Vect(Q, {\bf v_2})$ as in Proposition \ref{prop:singular-critical-decomp}.  The next lemma shows that the negative slice equations simplify on the space $\nu^{-1}(0)$ associated to a set $\mathcal{R}$ of relations. In particular, the space $S_x^-$ (which \emph{a priori} is a singular subset of the vector space $V_x^-$) is a vector space for this choice of stability parameter.

\begin{lemma}\label{lem:slice-linearises}
Let $\alpha = \alpha(Q, {\bf v})$ be the stability parameter from Definition \ref{def:aasp}. Then
\begin{equation*}
S_x^- = \Hom^1(Q, {\bf v_2}, {\bf v_1}) \cap \ker (\rho_x^\C)^* \cap \ker d \nu_x .
\end{equation*}
\end{lemma}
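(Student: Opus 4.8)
The plan is to combine the general characterization of the negative slice with the special structure of critical points for the stability parameter from Definition \ref{def:aasp}. Recall from \eqref{eqn:smooth-neg-slice} that on the ambient smooth space the negative eigenspace is $V(x)^- = \bigoplus_{j>k} \Hom^1(\tilde{Q}, {\bf v_j}, {\bf v_k}) \cap \ker(\rho_x^\C)^*$, where the subrepresentations are ordered by increasing slope. By Proposition \ref{prop:singular-critical-decomp}, for the canonical stability parameter a critical point $x$ splits as $x = x_1 \oplus x_2$ with $\mu_I(x_1) = k\alpha(Q,{\bf v_1})$ (with $k>0$) and $\mu_I(x_2) = 0$; since $\slope_\alpha(Q,{\bf v_1}) < 0 = \slope_\alpha(Q,{\bf v_2})$ by Remark \ref{rem:semistable-equals-stable}, the subrepresentation $x_1$ (containing the vertex $\infty$) has \emph{strictly smaller} slope, so it is the "$k=1$" factor while $x_2$ is the "$j=2$" factor. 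Hence the only pair with $j > k$ is $(j,k) = (2,1)$, and $V(x)^- = \Hom^1(\tilde{Q}, {\bf v_2}, {\bf v_1}) \cap \ker(\rho_x^\C)^*$. Note $x_2$ itself may further decompose into stable pieces of equal slope, but these contribute only to the zero eigenspace since their slopes coincide, so they do not enlarge $V(x)^-$.

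Next I would intersect with the slice $S_x$ from Definition \ref{def:slice}. By definition $S_x = \ker(\rho_x^\C)^* \cap \{\delta x : x + \delta x \in \mu_\C^{-1}(0)\}$, so
\begin{equation*}
S_x^- = V(x)^- \cap S_x = \Hom^1(\tilde{Q}, {\bf v_2}, {\bf v_1}) \cap \ker(\rho_x^\C)^* \cap \{\delta x : x + \delta x \in \mu_\C^{-1}(0)\}.
\end{equation*}
The substance of the lemma is therefore to show that on $\Hom^1(\tilde{Q}, {\bf v_2}, {\bf v_1}) \cap \ker(\rho_x^\C)^*$ the quadratic equation $\mu_\C(x + \delta x) = 0$ reduces to the \emph{linear} equation $d\mu_\C(\delta x) = 0$. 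Since $\mu_\C(A,B) = \sum_a [A_a, B_{\bar a}]$ is homogeneous quadratic, $\mu_\C(x + \delta x) = \mu_\C(x) + d\mu_\C|_x(\delta x) + \mu_\C(\delta x) = d\mu_\C|_x(\delta x) + \mu_\C(\delta x)$ because $\mu_\C(x) = 0$. So it suffices to show the purely quadratic term $\mu_\C(\delta x)$ vanishes for $\delta x \in \Hom^1(\tilde{Q}, {\bf v_2}, {\bf v_1})$.

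This is the key computational step. Write $\delta x = (\delta A, \delta B)$ with each component a homomorphism from a $V^{(2)}$-space to a $V^{(1)}$-space, i.e. $\delta x$ maps $\Vect(Q,{\bf v_2})$ into $\Vect(Q,{\bf v_1})$ and kills $\Vect(Q,{\bf v_1})$ (with respect to the decomposition $\Vect(Q,{\bf v}) \cong \Vect(Q,{\bf v_1}) \oplus \Vect(Q,{\bf v_2})$). Then $\mu_\C(\delta x) = \sum_a [\delta A_a, \delta B_{\bar a}]$, and each commutator $\delta A_a \delta B_{\bar a} - \delta B_{\bar a} \delta A_a$ is a composite of two maps each of which sends the "$1$" block to zero and lands in the "$1$" block: the image of $\delta B_{\bar a}$ lies in a $V^{(1)}$-space, which $\delta A_a$ annihilates, and symmetrically, so both products vanish as endomorphisms of $\Vect(Q,{\bf v})$. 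Hence $\mu_\C(\delta x) = 0$, and the equation $x + \delta x \in \mu_\C^{-1}(0)$ is equivalent to $d\mu_\C|_x(\delta x) = 0$, giving the claimed identity and in particular showing $S_x^-$ is a linear subspace. The main obstacle is just bookkeeping the off-diagonal/block structure carefully — making sure the ordering of $x_1, x_2$ by slope is correctly identified so that $S_x^-$ involves $\Hom^1(\tilde{Q}, {\bf v_2}, {\bf v_1})$ and not $\Hom^1(\tilde{Q}, {\bf v_1}, {\bf v_2})$, and confirming that the further decomposition of $x_2$ contributes nothing to the negative eigenspace.
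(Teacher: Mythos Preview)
Your proof is correct and follows essentially the same approach as the paper: identify $V(x)^-$ via the two-block splitting from Proposition \ref{prop:singular-critical-decomp}, then expand $\mu_\C(x+\delta x)$ and observe that the quadratic term $\sum_a[\delta A_a, \delta B_{\bar a}]$ vanishes because $\delta x \in \Hom^1(\tilde{Q}, {\bf v_2}, {\bf v_1})$ is strictly off-diagonal. One small slip: $\slope_\alpha(Q, {\bf v_2}) = 1$, not $0$, since ${\bf v_2}$ misses the vertex $\infty$ and $\alpha_i = 1$ for $i \in \mathcal{I}'$; but only the inequality $\slope_\alpha(Q, {\bf v_1}) < \slope_\alpha(Q, {\bf v_2})$ is needed, so the argument is unaffected.
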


\begin{proof}
The definition of $S_x^-$ together with \eqref{eqn:smooth-neg-slice} shows that
\begin{equation*}
S_x^- = \left\{ \delta x \in \Hom^1(Q, {\bf v_2}, {\bf v_1}) \cap \ker (\rho_x^\C)^* \, : \, x + \delta x \in \nu^{-1}(0) \right\} .
\end{equation*}
The polynomial $\nu(x+\delta x)$ has the following expansion.
\begin{equation*}
\nu(x + \delta x) = \nu(x) + d\nu_x(\delta x) + \text{(terms quadratic or higher in $\delta x$)} .
\end{equation*}
Since $\delta x \in \Hom^1(Q, {\bf v_2}, {\bf v_1})$ then $\im \delta x \subset \ker \delta x$ and so all the higher order terms vanish. Since $x \in \nu^{-1}(0)$ then the condition $\nu(x + \delta x) = 0$ simplifies to $d \nu_x(\delta x) = 0$.
\end{proof}

\section{Gradient flow lines between critical points}\label{sec:gradient-Nakajima}

This section contains the main results of the paper relating flow lines in symplectic geometry to the Hecke correspondence in algebraic geometry. Section \ref{subsec:scattering} contains the bulk of the analysis to show that the isomorphism classes of representations in the unstable set $W_x^-$ are in one-to-one correspondence with the isomorphism classes in the negative slice $S_x^-$ (Corollary \ref{cor:iso-classes-correspond}).  The methods used to prove this result also lead to a proof that a neighbourhood of the zero section in the negative slice bundle $S_C^-$ is homeomorphic to a neighbourhood of the zero section in the unstable bundle $W_C^-$, and that this statement remains true on restricting to any closed $G_{\bf v}$-invariant subset (Theorem \ref{thm:homeo-slice-bundle} and Corollary \ref{cor:homeo-restrict-to-singular}). 

After dealing with the analytic preliminaries, in Section \ref{subsec:flow-classification} we use the homological algebra of representations of quivers to give an algebraic criterion for two critical points to be connected by a flow line and interpret the Hecke correspondence in terms of pairs of critical points at adjacent critical levels which are connected by a flow line (Theorem \ref{thm:miniscule-flow-hecke}). 

\subsection{The relationship between the unstable set and the negative slice}\label{subsec:scattering}

The goal of this section is to construct a $K_{\bf v}$-equivariant homeomorphism between the negative slice bundle $S_C^-$ and the unstable bundle $W_C^-$ (Theorem \ref{thm:homeo-slice-bundle}). Since the construction is done purely in terms of the $G_{\bf v}$ action then it is sufficient to prove the result first on the manifold $\Rep(Q, {\bf v})$ and then afterwards restrict to a closed $G_{\bf v}$-invariant subset $Z \subset \Rep(Q, {\bf v})$.

\subsubsection{The modified flow in a neighbourhood of a critical point}\label{subsec:modified-flow}

In this section we define a new flow called the \emph{modified flow} which is $K_{\bf v}$-related to the gradient flow \eqref{eqn:neg-grad-flow}. The reason for doing this is that in addition to the distance-decreasing formula for the metric flow \eqref{eqn:distance-decreasing}, the modified flow also satisfies a distance-decreasing result for the action of $e^{-i \beta t}$ (cf. Lemma \ref{lem:modified-distance-decreasing}) which we need to carry out the procedure of Section \ref{subsec:reverse-scattering}.

Let $x$ be a critical point of $\| \mu - \alpha \|^2$ on $\Rep(Q, {\bf v})$ and let $U$ be a neighbourhood of $x$ on which the local slice result of Lemma \ref{lem:local-slice} applies, so that there is a neighbourhood $V$ of $(0,0)$ in $(\ker \rho_x^\C)^\perp \oplus \ker (\rho_x)^*$ and a diffeomorphism $\zeta : V \rightarrow U$ given by $\zeta(u, \delta x) = e^u \cdot (x + \delta x)$. Let $\beta = \mu(x) \in \mathfrak{k}^* \cong \mathfrak{k}$.

Let $K_\beta \subset K_{\bf v}$ denote the isotropy subgroup of $\beta \in \mathfrak{k}$ via the adjoint action of $K_{\bf v}$ on $\mathfrak{k}$. The critical point $x$ has an associated Harder-Narasimhan filtration and we define $P_\beta \subset G_{\bf v}$ as the subgroup preserving this filtration. $K_\beta$ is the maximal compact subgroup of the Levi subgroup of $P_\beta$. The following is the analog of the distance-decreasing property of the flow from \eqref{eqn:distance-decreasing}.

\begin{lemma}\label{lem:modified-distance-decreasing}
Given $g \in P_\beta$, let $g_t = e^{-i \beta t} g e^{i \beta t}$ and define $h_t = g_t^* g_t$. Then $\frac{d}{dt} \sigma(h_t) \leq 0$.
\end{lemma}

The group $G_{\bf v}$ can be written as a fibre product $G_{\bf v} \cong K_{\bf v} \times_{K_\beta} P_\beta$. Any homomorphism from a semistable representation of slope $\nu_1$ to a semistable representation of slope $\nu_2 < \nu_1$ must be zero (cf. \cite[Lem. 2.3]{Reineke03}) and so $(\mathfrak{g}_{\bf v})_x \subset \mathfrak{p}_\beta$ (see \cite[Lem. 20]{HaradaWilkin11}). Therefore $(\mathfrak{p}_\beta)^\perp \subset (\mathfrak{g}_{\bf v})_x^\perp$. Define $(\mathfrak{p}_\beta)_x^\perp := (\mathfrak{g}_{\bf v})_x^\perp \cap \mathfrak{p}_\beta$. Since the slice equations are $K_{\bf v}$-invariant then $k \cdot S_x = S_{k \cdot x}$ for all $k \in K_{\bf v}$ and so we can define $S_\beta := K_\beta \times_{K_x} S_x$. Then $K_\beta$ acts on $K_{\bf v} \times S_\beta \times (\mathfrak{p}_\beta)_x^\perp$ by $k_\beta \cdot (k, [k', x + \delta x, u]) = (k k_\beta^{-1}, [k_\beta k' k_\beta^{-1}, k_\beta \cdot (x + \delta x), \Ad_{k_\beta}(u)])$. The local slice result Lemma \ref{lem:local-slice} can then be refined to show that the map
\begin{align}\label{eqn:refined-slice}
\begin{split}
K_{\bf v} \times_{K_\beta} \left( S_\beta \times (\mathfrak{p}_\beta)_x^\perp \right) & \stackrel{\zeta}{\rightarrow} \Rep(Q, {\bf v}) \\
[k, k', x + \delta x, u] & \mapsto k \cdot k' \cdot e^u \cdot (x + \delta x)
\end{split}
\end{align}
is a $K_{\bf v}$-equivariant local diffeomorphism in a neighbourhood of $(\id, 0, 0)$, where $k'' \in K_{\bf v}$ acts on $K_{\bf v} \times_{K_\beta} (S_\beta \times (\mathfrak{p}_\beta)_x^\perp)$ by $k'' \cdot [k, k', x + \delta x, u] = [ k'' k, k', x + \delta x, u]$.

On the image $\zeta \left( S_\beta \times (\mathfrak{p}_\beta)_x^\perp \right) \subset U$, define $\gamma(y)_-$ as the component of $i \mu(y)$ in $(\mathfrak{p}_\beta)^\perp$. Note that since the action of $K_\beta$ preserves $\zeta \left( S_\beta \times (\mathfrak{p}_\beta)_x^\perp \right)$ and conjugation by $K_\beta$ preserves $(\mathfrak{p}_\beta)^\perp$ then $\gamma(k \cdot y)_- = \Ad_k \gamma(y)_-$ for all $k \in K_\beta$. Now define 
\begin{equation}\label{eqn:def-gamma}
\gamma(y) = \gamma(y)_- - \gamma(y)_-^* \in \mathfrak{k}_{\bf v} .
\end{equation}
Again we see that $\gamma$ is $K_\beta$-equivariant. Using the fibre product structure of \eqref{eqn:refined-slice} we can extend $\gamma$ to a $K_{\bf v}$-equivariant map $U \rightarrow \mathfrak{k}$. Note also that $-i \mu(y) + \gamma(y) \in \mathfrak{p}_\beta$ for all $y \in U$.  

The following lemma shows that $\gamma$ satisfies a Lipschitz bound analogous to that of Lemma \ref{lem:mu-h-Lipschitz}.
\begin{lemma}\label{lem:gamma-h-lipschitz}
Fix a critical point $x$ for $\| \mu - \alpha \|^2$. Given any $g \in G_{\bf v}$ define $h = g^* g$. Then there exist neighbourhoods $U$ of $x$ in $\Rep(Q, {\bf v})$ and $N$ of $K_{\bf v}$ in $G_{\bf v}$, and a constant $C$ such that $y \in U$ and $g \in N$ implies that
\begin{equation}\label{eqn:gamma-lipschitz-bound}
\| \Ad_{g^{-1}} \left( \gamma(g \cdot y) \right) - \gamma(y) \| \leq C \sqrt{\sigma(h)}
\end{equation} 
where $\sigma(h) = \tr(h + h^{-1} - 2 \id)$ is the distance function from \eqref{eqn:def-sigma}.
\end{lemma}

\begin{proof}
Given $g \in G_{\bf v} = \times_{k \in I} \GL(n_k, \C)$, the Cartan decomposition determines $k \in K_{\bf v}$ and $u \in i \mathfrak{k}_{\bf v}$ such that $g = k e^u$. The $K_{\bf v}$-equivariance of $\gamma$ then gives us
\begin{equation*}
\Ad_{g^{-1}} \gamma(g \cdot y) - \gamma(y) = \Ad_{e^{-u}} \gamma(e^u \cdot y) - \gamma(y)
\end{equation*}
which is a $C^\infty$ function of $u$ and $y$, which is zero when $u = 0$. Therefore, for each $y$ there exists a neighbourhood $N'$ of zero in $i \mathfrak{k}_{\bf v}$ and a constant $C'(y)$ (depending continuously on $y$) such that 
\begin{equation*}
\| \Ad_{e^{-u}} \gamma(e^u \cdot y) - \gamma(y) \| \leq C'(y) d(\id, e^u)
\end{equation*}
for all $u \in N'$, where $d$ denotes the geodesic distance in $G_{\bf v}$ with respect to the left-invariant metric determined by the norm $\| u \| = \tr(u u^*)$ on the Lie algebra $\mathfrak{g}_{\bf v}$. Since $C'(y)$ depends continuously on $y$ then \eqref{eqn:distance-sigma1} and \eqref{eqn:distance-sigma2} imply that there is a neighbourhood $U$ of $x$ in $\Rep(Q, {\bf v})$ and a uniform constant $C$ such that
\begin{equation*}
\| \Ad_{e^{-u}} \gamma(e^u \cdot y) - \gamma(y) \| \leq C \sqrt{\sigma(h)}
\end{equation*}
for all $y \in U$.
\end{proof}

From the definition of $\gamma(y)$ as a sum of components of $\mu(y)$, we have the inequality
\begin{equation}\label{eqn:gamma-slice-bound}
\| \gamma(y) \| = \| \gamma(y) - \gamma(x) \| \leq \| \mu(y) - \mu(x) \| = \| \mu(y) - \beta \| . 
\end{equation}

\begin{definition}\label{def:modified-flow}
Given $y_0 \in U$, the \emph{modified flow} with initial condition $y_0$ is the solution to
\begin{equation}\label{eqn:modified-flow-def}
\frac{dy_t}{dt} = -I \rho_{y_t}(\mu(y_t) - \alpha) + \rho_{y_t}(\gamma(y_t)) . 
\end{equation}
for all $t$ such that $y_t \in U$.
\end{definition}
Note that if $x$ is a critical point of $\| \mu - \alpha \|^2$ then $\mu(x) = \beta \in \mathfrak{k}_\beta \subset \mathfrak{p}_\beta$ and so $\gamma(x) = 0$. Therefore $x$ is also a stationary point for the modified flow. Define
\begin{equation*}
W_{mod,x}^- := \{ y_0 \in \Rep(Q, {\bf v}) \, : \, \text{$y_t$ solves \eqref{eqn:modified-flow-def} on $(-\infty, 0]$ and} \, \, \lim_{t \rightarrow - \infty} y_t = x  \} .
\end{equation*}
Given a critical set $C$, define $W_{mod,C}^- := \bigcup_{x \in C} W_{mod,x}^-$.

In a similar way to the downwards gradient flow of $\| \mu - \alpha \|^2$ (cf. \eqref{eqn:group-neg-flow}), a solution $y_t$ to \eqref{eqn:modified-flow-def} satisfies $y_t = g_t \cdot y_0$, where $g_t$ is a solution of
\begin{equation}\label{eqn:group-modified-flow}
\frac{dg_t}{dt} g_t^{-1} = -i (\mu(g_t \cdot y_0) - \alpha) + \gamma(g_t \cdot y_0), \quad g_0 = \id.
\end{equation}
Note that since $-i (\mu(y) - \alpha) + \gamma(y) \in \mathfrak{p}_\beta$ for all $y \in U$, then $g_t \in P_\beta$ for all $t$ such that $g_t \cdot y_0 \in U$. 

\begin{lemma}\label{lem:relate-flows}
Let $y_0 \in U$, let $y_t^{mod}$ be a solution to the modified flow \eqref{eqn:modified-flow-def} with initial condition $y_0$ and let $y_t^{orig}$ be a solution to the original downwards gradient flow of $\| \mu - \alpha \|^2$ given by \eqref{eqn:neg-grad-flow}. Then $y_t^{mod} = s_t \cdot y_t^{orig}$, where $s_t \in K_{\bf v}$ is the solution of 
\begin{equation}\label{eqn:relate-flow-def}
\frac{ds}{dt} s_t^{-1} = \gamma(s_t \cdot y_t^{orig}), \quad s_0 = \id .
\end{equation}
\end{lemma}

\begin{proof}
Let $g_t^{orig}$ be the solution to \eqref{eqn:group-neg-flow} such that $y_t^{orig} = g_t^{orig} \cdot y_0$. Define $g_t^{mod} = s_t \cdot g_t^{orig}$. We then have
\begin{align*}
\frac{d g_t^{mod}}{dt} (g_t^{mod})^{-1} & = \frac{ds}{dt} s_t^{-1} + \Ad_{s_t} \left( \frac{d g_t^{orig}}{dt} (g_t^{orig})^{-1} \right) \\
 & = \gamma(s_t \cdot y_t^{orig}) - i \Ad_{s_t} \left( \mu(y_t^{orig}) - \alpha \right) \\
 & = \gamma(g_t^{mod} \cdot y_0) - i \left( \mu(g_t^{mod} \cdot y_0) - \alpha \right)
\end{align*}
and so $g_t^{mod}$ is a solution to \eqref{eqn:group-modified-flow}, hence $y_t^{mod} := g_t^{mod} \cdot y_0$ is a solution to \eqref{eqn:modified-flow-def}.
\end{proof}

\subsubsection{Exponential convergence of the reverse flow}

In this section we prove that a solution to the modified flow \eqref{eqn:modified-flow-def} which converges to a critical point as $t \rightarrow - \infty$ must converge at an exponential rate. The idea of the proof is similar to the case of the Yang-Mills-Higgs flow studied in \cite{wilkin-YMH-flow-lines}, however there are some simplifications here since for the Yang-Mills-Higgs flow we prove convergence in the $L_k^2$ norm for all $k$, but for the finite-dimensional space $\Rep(Q, {\bf v})$ we only need to prove convergence in the topology induced by the metric on $\Rep(Q, {\bf v})$. We can then use the exponential convergence to prove Proposition \ref{prop:modified-flow-homeo}, which shows that the unstable bundles for the gradient flow \eqref{eqn:neg-grad-flow} and the modified flow \eqref{eqn:modified-flow-def} are $K_{\bf v}$-equivariantly homeomorphic. 

Let $x$ be a non-minimal critical point of $f = \| \mu - \alpha \|^2$ on $\Rep(Q, {\bf v})$ and let $\beta = \mu(x)$. The local slice result of (Lemma \ref{lem:local-slice}) proves the existence of a neighbourhood $U$ of $x$ such that any $y \in U$ can be written $y = e^u \cdot (x + z)$ for $u \in \mathfrak{g}_x^\perp$ and $z \in S_x$. Moreover, this description is unique if we restrict to a neighbourhood of zero in $\mathfrak{g}_x^\perp \times S_x$. Further decompose $z = z_{\geq 0} + z_-$ according to the eigenspaces for the action of $e^{i \beta}$ on $S_x$. 

The following lemma collects some results of Kirwan \cite[Sec. 10]{Kirwan84} which will be used in the sequel.
\begin{lemma}
With respect to the decomposition $y = e^u \cdot (x + z_{\geq 0} + z_-)$ we have
\begin{enumerate}
\item $f(e^u \cdot (x + z_{\geq 0})) \geq f(x)$,

\item $\grad f(e^u \cdot (x + z_{\geq 0}))$ is tangent to $\{ z_- = 0 \}$, and

\item \cite[(10.14)]{Kirwan84} Let $y_t$ be a solution to the gradient flow \eqref{eqn:neg-grad-flow} which converges to $x$ as $t \rightarrow - \infty$, and write $y_t = e^{u_t} \cdot (x + z_{\geq 0}^t + z_-^t)$. Then there exists $\delta > 0$ and positive constants $K_1$ and $K_2$ such that if $\| y_0 - x \| < \delta $ then $\| z_-^t \| \leq K_1 e^{K_2 t}$ for all $t \leq 0$.
\end{enumerate}
\end{lemma}

Using this, we can show that the quantity $f(x) - f(y_t)$ decreases exponentially.

\begin{lemma}
There exists a neighbourhood $U$ of $x$ and positive constants $K_1'$ and $K_2'$ such that if $y_t$ is a solution to \eqref{eqn:neg-grad-flow} with initial condition $y_0 \in U$ which converges to $x$ as $t \rightarrow - \infty$, then $f(x) - f(y_t) \leq K_1' e^{K_2' t}$ for all $t \leq 0$. 
\end{lemma}

\begin{proof}
Let $y = e^u \cdot (x + z_{\geq 0} + z_-)$ as above. Since $x$ is a critical point then for all $\varepsilon > 0$ there exists $\delta > 0$ such that $\| y - x \| < \delta$ implies that 
\begin{equation*}
f(e^u \cdot (x + z_{\geq 0} + z_-)) - f(e^u \cdot (x + z_{\geq 0})) \geq - \varepsilon \| z_- \| .
\end{equation*}
Therefore
\begin{align*}
f(e^u \cdot (x + z_{\geq 0} + z_-)) - f(x) & = f(e^u \cdot (x + z_{\geq 0} + z_-)) - f(e^u \cdot (x + z_{\geq 0})) \\
 & \quad \quad + f(e^u \cdot (x  + z_{\geq 0})) -  f(x) \\
 & \geq f(e^u \cdot (x + z_{\geq 0} + z_-)) - f(e^u \cdot (x + z_{\geq 0})) \\
 & \geq - \varepsilon \| z_- \| \\
 & \geq - \varepsilon K_1 e^{K_2 t} 
\end{align*}
and so the result follows after setting $K_1' = \varepsilon K_1$ and $K_2' = K_2$.
\end{proof}

\begin{lemma}\label{lem:exponential-convergence}
There exists a neighbourhood $U$ of $x$ and positive constants $C_1$ and $C_2$ such that if $y_t$ is a solution to \eqref{eqn:neg-grad-flow} with initial condition $y_0 \in U$ which converges to $x$ as $t \rightarrow - \infty$, then $\| y_t - x \| \leq C_1 e^{C_2 t}$ for all $t \leq 0$. 
\end{lemma}

\begin{proof}
After possibly shrinking the neighbourhood $U$ of the previous lemma, we can apply the Lojasiewicz inequality method of Simon \cite{Simon83} to show that there exist constants $C > 0$ and $\theta \in (0, 1)$ such that
\begin{equation}\label{eqn:flow-length-bound}
\| y_t - x \| \leq \int_{-\infty}^t \| \grad f(y_s) \| \, ds \leq \frac{1}{C \theta} \left( f(x) - f(y_t) \right)^\theta 
\end{equation}
and the previous lemma shows that
\begin{equation}\label{eqn:energy-difference-bound}
\frac{1}{C \theta} \left( f(x) - f(y_t) \right)^\theta \leq \frac{1}{C \theta} (K_1')^\theta e^{\theta K_2' t} .
\end{equation}
The result then follows by combining \eqref{eqn:flow-length-bound} and \eqref{eqn:energy-difference-bound}.
\end{proof}

Now we can show that this result also applies to the modified flow. Let $y_t$ be a solution to the gradient flow which converges to a critical point $x$ as $t \rightarrow \infty$ and let $z_t$ be the associated solution to the modified flow \eqref{eqn:modified-flow-def} with initial condition $y_0$. Lemma \ref{lem:relate-flows} shows that $z_t = s_t \cdot y_t$ for $s_t \in K_{\bf v}$. Since the metric is $K_{\bf v}$-invariant and both $\gamma$ and $\mu$ are $K_{\bf v}$-equivariant then
\begin{align*}
\left\| \frac{dz}{dt} \right\| & = \left\| - I \rho_{z_t} (\mu(z_t) - \alpha) + \gamma(z_t) \right\| \\
 & =  \left\| s_t \left( - I \rho_{y_t} (\mu(y_t) - \alpha) + \gamma(y_t) \right) s_t^{-1} \right\| \\
 & =  \left\| - I \rho_{y_t} (\mu(y_t) - \alpha) + \gamma(y_t) \right\| \leq \left\| \frac{dy}{dt} \right\| + \left\| \gamma(y_t) \right\|
\end{align*}

Since $\gamma$ is smooth and satisfies $\gamma(x) = 0$ then (after possibly shrinking the neighbourhood $U$) there exists a Lipschitz constant $C'$ such that 
\begin{equation}\label{eqn:gamma-critical-lipschitz}
\| \gamma(y) \| \leq C' \| y - x \| \quad \text{for all $y \in U$.}
\end{equation}
Therefore, along the solution $y_t$ to \eqref{eqn:neg-grad-flow} we have $\| \gamma(y_t) \| \leq C' C_1 e^{C_2 t}$, and so there are positive constants $C_1'$ and $C_2'$ such that
\begin{equation}\label{eqn:modified-flow-length}
\int_{-\infty}^t \left\| \frac{dz_s}{ds} \right\| \, ds \leq \int_{-\infty}^t \left\| \frac{dy_s}{ds} \right\| \, ds + \int_{-\infty}^t C' C_1 e^{C_2 s} \, ds \leq C_1' e^{C_2' t}  .
\end{equation}

\begin{proposition}\label{prop:exponential-modified-convergence}
Let $y_t$ be a solution to the downwards gradient flow \eqref{eqn:neg-grad-flow} that converges to a critical point $x$ as $t \rightarrow - \infty$ and let $z_t = s_t \cdot y_t$ be the associated solution to the modified flow with initial condition $y_0$. Then $\lim_{t \rightarrow - \infty} s_t =: s_\infty \in K_{\bf v}$ exists and there exist positive constants $C_1'$ and $C_2'$ such that 
\begin{equation*}
\| z_t - s_\infty \cdot x \| \leq C_1' e^{C_2' t}
\end{equation*}
for all $t \leq 0$.
\end{proposition}

\begin{proof}
The inequality \eqref{eqn:modified-flow-length} shows that the length of the flow line $\{ z_t : t \leq 0 \}$ converges to zero exponentially as $t \rightarrow - \infty$. Therefore $z_t$ converges exponentially to a unique limit $z_\infty$. Since $z_t \in K_{\bf v} \cdot y_t$ for all $t$, $y_t \rightarrow x$ as $t \rightarrow - \infty$ and $K_{\bf v} \cdot x$ is closed then $z_\infty \in K_{\bf v} \cdot x$ also. Choose $k \in K_{\bf v}$ such that $z_\infty = k \cdot x$. Note that this implies $\gamma(z_\infty) = 0$. 

Since $\frac{ds_t}{dt} s_t^{-1} = \gamma(z_t)$ and $\| \gamma(z_t) \| = \| \gamma(z_t) - \gamma(z_\infty) \| \leq C' \| z_t - z_\infty \|$ is exponentially decreasing, then $s_t$ also converges to a unique limit $s_\infty \in K_{\bf v}$. Since $s_t \cdot y_t \rightarrow s_\infty \cdot x$ then $z_\infty = k \cdot x = s_\infty \cdot x$.
\end{proof}

Now we use the limit $s_\infty$ to define a map between the unstable sets for the gradient flow \eqref{eqn:neg-grad-flow} and the modified flow \eqref{eqn:modified-flow-def}.

\begin{lemma}\label{lem:continuous-relate-unstable-sets}
Let $y_0 \in W_x^-$ and let $s_t$ be the solution of \eqref{eqn:relate-flow-def} for $t \in (-\infty, 0]$ with initial condition $y_0$ and limit $s_\infty(y_0)$. Then $s_\infty(y_0)^{-1} \cdot y_0 \in W_{mod,x}^-$. Moreover, if $k \in K_{\bf v}$ then $s_\infty(k \cdot y_0) = \Ad_k s_\infty(y_0)$. This defines a continuous $K_{\bf v}$-equivariant map $\eta : W_C^- \rightarrow W_{mod, C}^-$. 
\end{lemma}

\begin{proof}
Proposition \ref{prop:exponential-modified-convergence} shows that the solution $z_t$ to the modified flow \eqref{eqn:modified-flow-def} with initial condition $y_0$ converges to $s_\infty \cdot x$ as $t \rightarrow - \infty$. The $K_{\bf v}$-equivariance of the flow then implies that $s_\infty^{-1} \cdot z_t$ is a solution to \eqref{eqn:modified-flow-def} with initial condition $s_\infty^{-1} \cdot y_0$. Therefore $s_\infty^{-1} \cdot z_t \rightarrow s_\infty^{-1} \cdot z_\infty = x$ and so $s_\infty^{-1} \cdot y_0 \in W_{mod,x}^-$. 

The $K_{\bf v}$-equivariance of the flow also implies that for any $k \in K_{\bf v}$ the curve $k \cdot z_t$ is a solution of \eqref{eqn:modified-flow-def} with initial condition $k \cdot y_0$. The equation for $s_t(k \cdot y_0)$ then becomes
\begin{equation*}
\frac{ds_t(k \cdot y_0)}{dt} s_t(k \cdot y_0)^{-1} = \gamma(k \cdot z_t) = \Ad_k \gamma(z_t) = \Ad_k \left( \frac{d s_t(y_0)}{dt} s_t(y_0)^{-1} \right) 
\end{equation*}
and so $s_t(k \cdot y_0) = \Ad_k s_t(y_0)$. Therefore the limits also satisfy $s_\infty(k \cdot y_0) = \Ad_k s_\infty(y_0)$.
\end{proof}

Finally, we prove that the unstable sets for the gradient flow \eqref{eqn:neg-grad-flow} and the modified flow \eqref{eqn:modified-flow-def} are $K_{\bf v}$-equivariantly homeomorphic.

\begin{proposition}\label{prop:modified-flow-homeo}
The map $\eta$ defines a $K_{\bf v}$-equivariant homeomorphism of pairs $\varphi : (W_C^-, W_C^- \setminus C) \rightarrow (W_{mod,C}^-, W_{mod,C}^- \setminus C)$.
\end{proposition}

\begin{proof}
Lemma \ref{lem:continuous-relate-unstable-sets} shows that $\eta$ is well-defined and continuous. Therefore the proof reduces to showing that there is a continuous inverse.

Given any $z_0 \in W_{mod,x}^-$, let $z_t$ denote the solution of \eqref{eqn:modified-flow-def} for $t \in (-\infty,0]$, let $y_t$ denote the solution of the gradient flow of $\| \mu - \alpha \|^2$ given by \eqref{eqn:neg-grad-flow}. Then Lemma \ref{lem:relate-flows} shows that $z_t = s_t \cdot y_t$ for $s_t$ solving \eqref{eqn:relate-flow-def} and that $s_\infty(z_0) \cdot z_0 \in W_x^-$.

The same argument as in Lemma \ref{lem:continuous-relate-unstable-sets} then shows that this defines a continuous $K_{\bf v}$-equivariant map $\eta' : W_{mod,C}^- \rightarrow W_C^-$. Given any $y_0 \in W_C^-$, we compute
\begin{align*}
\eta' \circ \eta(y_0) & = \eta'(s_\infty(y_0)^{-1} \cdot y_0) \\
 & = s_\infty(s_\infty(y_0)^{-1} \cdot y_0) \cdot s_\infty(y_0)^{-1} \cdot y_0 \\
 & = \left( s_\infty(y_0)^{-1} \cdot s_\infty(y_0) \cdot s_\infty(y_0) \right) \cdot s_\infty(y_0)^{-1} \cdot y_0 \quad \text{by Lemma \ref{lem:continuous-relate-unstable-sets}} \\
 & = y_0 .
\end{align*}
An analogous calculation shows that $\eta \circ \eta'(z_0) = z_0$. Therefore $\eta'$ is a continuous inverse to $\eta : W_C^- \rightarrow W_{mod, C}^-$  and so $\eta$ is a $K_{\bf v}$-equivariant homeomorphism.
\end{proof}

\subsubsection{Convergence of the scattering method}

In this section we show how to use the $G_{\bf v}$ action to construct a map $S_x^- \rightarrow W_{mod,x}^-$ for each critical point $x \in C$. The method is to first use the time $t$ linearised gradient flow on $S_x^-$ to flow towards the critical point $x$ (this is the upwards gradient flow of the function $\mu_\beta$ used by Kirwan in \cite{Kirwan84}), and then to flow down for time $t$ using the modified flow \eqref{eqn:modified-flow-def}. The main result of this section is Proposition \ref{prop:convergence-group} which shows that the composition of these two flows converges as $t \rightarrow \infty$. The same idea will also work if we use the gradient flow \eqref{eqn:neg-grad-flow} instead of the modified flow, however we need the modified flow for the reverse construction in the next section and so for consistency we use the modified flow throughout both sections.

A similar process is called the \emph{scattering method} in \cite{Hubbard05} and \cite{Nelson69}, since it originates in the study of wave operators in quantum mechanics. The method in this section is different to that in \cite{Hubbard05} and \cite{Nelson69}, since here we define the linearised flow intrinsically on the negative slice $S_x^-$ (the method of \cite{Hubbard05} projects the gradient flow onto the negative eigenspace of the Hessian) and we use the action of $G_{\bf v}$ and the distance-decreasing formula \eqref{eqn:distance-decreasing} in place of the coordinate transformations and the estimates of \cite{Hubbard05}. The reason for this is so that the entire process is defined intrinsically on any closed $G_{\bf v}$-invariant subset $Z \subset \Rep(Q, {\bf v})$, which is not the case for the method of \cite{Hubbard05} since it requires the subset $Z$ to be smooth in order to (a) define the projection onto the negative eigenspace of the Hessian and (b) define the coordinate transformations which map the gradient vector field to a vector field which differs from the linearised flow by a term of high enough order to make the scattering method converge (cf. \cite[Sec. 3]{Hubbard05}).

Throughout this section we fix a critical point $x$ and define $\beta = \mu(x) - \alpha \in \mathfrak{k}_{\bf v}$. To simplify the notation in this section, we use the following shifted version of the negative slice (cf. Definition \ref{def:neg-slice-def})
\begin{equation*}
S_x^- : = \{ x + \delta x \, : \, \delta x \in S_x, \, \, \lim_{t \rightarrow \infty} e^{i \beta t} \cdot \delta x  = 0 \} .
\end{equation*}
For any point $y_0 \in S_x^-$, let $y_t = \phi^{mod}(y_0, t) = g_t \cdot y_0$ denote the time $t$ modified flow \eqref{eqn:modified-flow-def} with initial condition $y_0$, where $g_t$ solves \eqref{eqn:group-modified-flow}. Now define $f_t = g_t \cdot e^{i \beta t}$ so that $y_t = f_t \cdot e^{-i \beta t} \cdot y_0$ and let $h_t := f_t^* f_t \in G_{\bf v} / K_{\bf v}$ be the associated change of metric. 

\begin{figure}[ht]
\begin{center}
\begin{pspicture}(2,0)(10,5)
\psline(4,5)(4,0)
\psline[arrowsize=5pt]{->}(4,4)(4,1)
\pscurve[arrowsize=5pt]{->}(4,4)(4.4,2.3)(5,1.2)
\pscurve[arrowsize=5pt]{->}(4,1)(4.5,1.05)(5,1.2)
\psdots[dotsize=3pt](4,5)(4,4)(4,1)(5,1.2)
\uput{4pt}[180](4,5){\small{$x$}}
\uput{4pt}[180](4,4){\small{$y_0$}}
\uput{4pt}[180](4,1){\small{$e^{-i \beta t} \cdot y_0$}}
\uput{4pt}[0](5,1.2){\small{$g_t \cdot y_0 = f_t \cdot e^{-i \beta t} \cdot y_0$}}
\uput{2pt}[180](4,0.3){$S_x^-$}
\uput{3pt}[270](4.5,1.05){\small{$f_t$}}
\uput{3pt}[30](4.4,2.3){\small{$g_t$}}
\uput{3pt}[180](4,2.3){\small{$e^{-i \beta t}$}}
\end{pspicture}
\end{center}
\end{figure}

The same calculation as in \cite[Lem. 3.14]{wilkin-YMH-flow-lines} shows that
\begin{equation}\label{eqn:gauge-difference-derivative}
\frac{df_t}{dt} f_t^{-1} = -i (\mu(g_t \cdot y_0) - \alpha) + \gamma(g_t \cdot y_0) +  f_t(i\beta) f_t^{-1}
\end{equation}
and
\begin{equation}\label{eqn:metric-derivative}
\frac{dh_t}{dt} = -2i h_t (\mu_h(e^{-i \beta t} \cdot y_0) - \alpha) + i \beta h_t + i h_t \beta .
\end{equation}

Next we derive a uniform bound on $\sigma(h_t)$ in terms of $\| y_0 - x \|$.

\begin{lemma}\label{lem:uniform-bound-metric}
There exists $\varepsilon > 0$ such that if $\| e^{-i \beta T} \cdot y_0 - x \|^2 < \varepsilon$ then there exists a constant $K$ (which is uniform over the critical set containing $x$) such that
\begin{equation}\label{eqn:uniform-sigma-bound}
\sigma(h_t) \leq K \| e^{-i \beta T} \cdot y_0 - x \|^2 .
\end{equation}
\end{lemma}

\begin{proof}
Taking the trace of \eqref{eqn:metric-derivative} leads to
\begin{equation*}
\frac{d}{dt} \tr (h_t) = - 2i \tr \left( (\mu_h(e^{-i \beta t} \cdot y_0) - \alpha - \beta) h_t \right) .
\end{equation*}
Therefore we can rewrite the derivative as
\begin{equation*}
\frac{d}{dt} \tr (h_t) = -2i \tr \left( (\mu_h(e^{-i \beta t} \cdot y_0) - \mu(e^{-i \beta t} \cdot y_0)) h_t \right) - 2i \tr \left( (\mu(e^{-i \beta t} \cdot y_0) - \alpha - \beta) h_t \right) .
\end{equation*}
Since $h_t$ is positive definite, then the inequality \eqref{eqn:mu-h-inequality} shows that
\begin{align*}
\frac{d}{dt} \tr (h_t) & \leq  2i \tr \left( (\mu(e^{-i \beta t} \cdot y_0) - \alpha - \beta) h_t \right) \\
 & \leq C \| \mu(e^{-i \beta t} \cdot y_0) - \alpha - \beta \| \tr(h_t) \\
 & \leq C \| e^{-i \beta t} \cdot y_0 - x \|^2 \tr (h_t) \quad \text{by \eqref{eqn:moment-map-slice-quadratic}.}
\end{align*}
The same idea with $\tr(h_t^{-1})$ shows that
\begin{equation*}
\frac{d}{dt} \tr (h_t^{-1}) \leq C \| e^{-i \beta t} \cdot y_0 - x \|^2 \tr (h_t^{-1})
\end{equation*}
Now write the eigenvalues of $i \beta$ in increasing order $\lambda_1 \leq \cdots \leq \lambda_k < 0 \leq \lambda_{k+1} \leq \cdots \leq \lambda_n$. On each critical set $\beta = \mu(x) - \alpha$ is unique up to conjugation by $K_{\bf v}$, and so these eigenvalues are constant on the critical set containing $x$ (we use this below to show that the constant $K$ is uniform on the critical set). The following estimate holds for any $y_0 \in S_x^-$ and any $t \leq T$
\begin{equation*}
\| e^{-i \beta t} \cdot y_0 - x \| = \| e^{i \beta (T-t)} \cdot e^{-i \beta T} \cdot y_0 - x \| \leq e^{\lambda_k (T-t)} \| e^{-i \beta T} \cdot y_0 - x \| .
\end{equation*}
Therefore the derivative of $\sigma(h_t) = \tr(h_t) + \tr(h_t^{-1}) - 2 \rank (Q, {\bf v})$ satisfies the inequality
\begin{align*}
\frac{d}{dt} \sigma(h_t) & \leq 2 C e^{2 \lambda_k (T-t)} \| e^{-i \beta T} \cdot y_0 - x \|^2 \left( \tr (h_t) + \tr (h_t^{-1}) \right) \\
 & = 2 C e^{2 \lambda_k (T-t)} \| e^{-i \beta T} \cdot y_0 - x \|^2 \sigma(h_t) + 4 C \rank(Q, {\bf v}) e^{2 \lambda_k (T-t)} \| e^{-i \beta T} \cdot y_0 - x \|^2 
\end{align*}
An application of Gronwall's inequality then shows that for all $0 \leq t \leq T$ we have
\begin{align*}
\sigma(h_t) & \leq \left( 4 C \rank(Q, {\bf v}) \| e^{-i \beta T} \cdot y_0 - x \|^2  \int_0^t e^{2\lambda_k (T-s)} \, ds \right) \exp \left( 2 C \| e^{-i \beta T} \cdot y_0 - x \|^2 \int_0^t e^{2 \lambda_k (T-s)} \, ds \right) \\
 & \leq  \left( 4 C \rank(Q, {\bf v}) \| e^{-i \beta T} \cdot y_0 - x \|^2  \frac{1}{2 |\lambda_k|} \right) \exp \left( 2 C \| e^{-i \beta T} \cdot y_0 - x \|^2 \frac{1}{2 |\lambda_k|} \right)
\end{align*}
Therefore if $\| e^{-i \beta T} \cdot y_0 - x \|^2 < \varepsilon$ then there exists a constant $K$ such that
\begin{equation*}
\sigma(h_t) \leq K \| e^{-i \beta T} \cdot y_0 - x \|^2 \quad \text{for all $0 \leq t \leq T$.}
\end{equation*}
Moreover, since the constant $K$ depends only on $\varepsilon$,  $\rank(Q, {\bf v})$ and the eigenvalue $\lambda_k$, which are all constant on each critical set, then $K$ is uniform over the critical set containing $x$.
\end{proof}

Now we can use this estimate to derive a bound on the distance from $f_t$ to the identity in $G_{\bf v}$. In the following we fix a norm on $\mathfrak{g}_{\bf v}$ given by $\| u \|^2 := \tr(u u^*)$, and use $d(g_1, g_2)$ to denote the geodesic distance between $g_1, g_2 \in G_{\bf v}$ with respect to the left-invariant metric on $G_{\bf v}$ associated to the norm on $\mathfrak{g}_{\bf v}$.

\begin{lemma}\label{lem:group-distance}
\begin{equation}\label{eqn:group-distance}
d(\id, f_t) \leq C_1 \| e^{-i \beta T} \cdot y_0 - x \| + C_2 \| e^{-i \beta T} \cdot y_0 - x \|^2 
\end{equation}
\end{lemma}

\begin{proof}
Equation \eqref{eqn:gauge-difference-derivative} shows that
\begin{align*}
f_t^{-1} \frac{df_t}{dt} & =  -i f_t^{-1} (\mu(g_t \cdot y_0) - \alpha) f_t + f_t \gamma(g_t \cdot y_0) f_t^{-1} + i \beta \\
 & = -i (\mu_{h_t}(e^{-i \beta t} \cdot y_0) - \alpha) + f_t \gamma(g_t \cdot y_0) f_t^{-1} + i \beta \\
 & = -i \left( \mu_{h_t}(e^{-i \beta t} \cdot y_0) - \mu(e^{-i \beta t} \cdot y_0) \right) -i \left( \mu(e^{-i \beta t} \cdot y_0) - \alpha - \beta \right) \\
 & \quad \quad \quad + \left( f_t \gamma(g_t \cdot y_0) f_t^{-1} - \gamma(e^{-i \beta t} \cdot y_0) \right) + \gamma(e^{-i \beta t} \cdot y_0)
\end{align*}
From the bounds \eqref{eqn:mu-h-Lipschitz}, \eqref{eqn:gamma-lipschitz-bound}, \eqref{eqn:moment-map-slice-quadratic} and \eqref{eqn:gamma-slice-bound}, there exist constants $K_1$ and $K_2$ such that
\begin{equation*}
\left\| f_t^{-1} \frac{df_t}{dt} \right\| \leq K_1 \sqrt{\sigma(h_t)} + K_2 \| e^{-i \beta t} \cdot y_0 - x \|^2 .
\end{equation*}
Now the inequality \eqref{eqn:uniform-sigma-bound} implies that for all $0 \leq t \leq T$
\begin{equation}\label{eqn:group-distance-identity}
d(\id, f_t) \leq \int_0^T \left\| f_t^{-1} \frac{df_t}{dt} \right\| \, dt \leq C_1 \| e^{-i \beta T} \cdot y_0 - x \| + C_2 \| e^{-i \beta T} \cdot y_0 - x \|^2  .
\end{equation}
\end{proof}

Now fix $y_0 \in S_x^-$ such that $\| y_0 - x \| < \varepsilon$ (where $\varepsilon$ is defined in the previous lemma) and define $y_t = e^{i \beta t} \cdot y_0$ for $t \geq 0$. Let $g_s(y_t) \in G_{\bf v}$ be the curve in $G_{\bf v}$ generating the time $s$ modified flow \eqref{eqn:group-modified-flow} with initial condition $y_t$, i.e. $\phi^{mod}(y_t, s) = g_s(y_t) \cdot y_t$. Define $f_s(y_t) = g_s(y_t) \cdot e^{i \beta s} \in G_{\bf v}$ and let $h_s(y_t) = f_s(y_t)^* f_s(y_t)$ be the associated change of metric. This is summarised in the following diagram.

\begin{center}
\begin{pspicture}(2,-1.5)(10,6.5)
\psline(4,6)(4,-1)
\psline[arrowsize=5pt]{->}(4,0)(4,5)
\pscurve[arrowsize=5pt]{->}(4,5)(4.4,3.6)(4.9,2.2)(5.45,1.1)(6,0.3)
\pscurve[arrowsize=5pt]{->}(4,3.5)(4.2,3.53)(4.4,3.6)
\pscurve[arrowsize=5pt]{->}(4,0)(5,0.05)(6,0.3)
\pscurve[arrowsize=5pt]{->}(4,3.5)(4.6,1.7)(5.3,0.09)
\psdots[dotsize=3pt](4,6)(4,5)(4,0)(6,0.3)(4,3.5)(4.4,3.6)(5.3,0.09)
\uput{4pt}[180](4,6){\small $x$}
\uput{4pt}[180](4,5){\small $y_{t_1} = e^{i \beta t_1} \cdot y_0$}
\uput{4pt}[180](4,0){\small $y_0$}
\uput{4pt}[180](4,3.5){\small $y_{t_2}=e^{i \beta t_2} \cdot y_0$}
\uput{4pt}[10](6,0.3){\small $g_{t_1}(y_{t_1}) \cdot y_{t_1} = f_{t_1}(y_0) \cdot y_0$}
\uput{4pt}[300](5.3,0.09){\small{$g_{t_2}(y_{t_2}) \cdot y_{t_2} = f_{t_2}(y_0) \cdot y_0$}}
\uput{4pt}[10](4.4,3.6){\small{$g_{t_1-t_2}(y_{t_1}) \cdot y_{t_1} = f_{t_1-t_2}(y_{t_2}) \cdot y_{t_2}$}}
\uput{2pt}[0](4,-0.8){$S_x^-$}
\end{pspicture}
\end{center}

Using this notation, the estimate \eqref{eqn:uniform-sigma-bound} now becomes
\begin{equation}\label{eqn:new-uniform-sigma-bound}
\sigma(h_s(y_t)) \leq K \| y_t - x \|^2 \leq K e^{2 \lambda_k t} \| y_0 - x \|^2 . 
\end{equation}
and \eqref{eqn:group-distance} becomes
\begin{equation}\label{eqn:new-group-distance-identity}
d(\id, f_s(y_t)) \leq  C_1 \| y_t - x \| + C_2 \| y_t - x \|^2
\end{equation}

Now we can use the estimate from the previous lemma to prove that the sequence $f_t(y_0)$ converges in $G_{\bf v}$. The limit of this sequence defines the map $\psi_x : S_x^- \rightarrow W_x^-$ used in Section \ref{subsec:slice-homeo-unstable}.

\begin{proposition}\label{prop:convergence-group}
The sequence $f_t(y_0)$ is Cauchy in $G_{\bf v}$. Let $f_\infty(y_0)$ denote the limit. Then $f_t(y_0) \cdot y_0$ converges to $f_\infty(y_0) \cdot y_0$ in $\Rep(Q, {\bf v})$.
\end{proposition}

\begin{proof}
Consider the gradient flow with initial conditions $y_{t_2} = e^{i \beta t_2} \cdot y_0$ and $f_{t_1-t_2}(y_{t_2}) \cdot y_{t_2} = \delta_0 \cdot y_{t_2}$, where $\delta_0 = f_{t_1-t_2}(y_{t_2}) \in G_{\bf v}$. The estimate \eqref{eqn:new-group-distance-identity} shows that
\begin{equation}\label{eqn:delta-0-bound}
d(\id, \delta_0) \leq C_1 \| y_{t_2} - x \| + C_2 \| y_{t_2} - x \|^2 \leq C_1 e^{\lambda_k t_2} \| y_0 - x \| + C_2 e^{2 \lambda_k t_2} \| y_0 - x \|^2 .
\end{equation}
Denote the respective solutions by $Y_t^{(1)} = g_t^{(1)} \cdot y_{t_2}$ and $Y_t^{(2)} = g_t^{(2)} \cdot f_{t_1-t_2}(y_{t_2}) \cdot y_{t_2} = \delta_t \cdot Y_t^{(1)}$, where $\delta_t = g_t^{(2)} \cdot \delta_0 \cdot (g_t^{(1)})^{-1}$ as shown in the diagram below.

\begin{center}
\begin{pspicture}(2,-1.5)(10,6.5)
\psline(4,6)(4,-1)
\psline[arrowsize=5pt](4,0)(4,5)
\pscurve[arrowsize=5pt]{->}(4,5)(4.4,3.6)(4.9,2.2)(5.45,1.1)
\pscurve[arrowsize=5pt]{->}(4,3.5)(4.6,1.7)(4.9,1)
\pscurve[arrowsize=5pt]{->}(4,3.5)(4.2,3.53)(4.4,3.6)
\pscurve[arrowsize=5pt]{->}(4.9,1)(5.2,1.04)(5.45,1.1)
\psdots[dotsize=3pt](4,6)(4,5)(4,0)(5.45,1.1)(4,3.5)(4.4,3.6)(4.9,1)
\uput{4pt}[180](4,6){\small $x$}
\uput{4pt}[180](4,5){\small $y_{t_1} = e^{i \beta t_1} \cdot y_0$}
\uput{4pt}[180](4,0){\small $y_0$}
\uput{4pt}[180](4,3.5){\small $y_{t_2}=e^{i \beta t_2} \cdot y_0$}
\uput{4pt}[225](4.9,1){\small $Y_t^{(1)}$}
\uput{4pt}[330](5.45,1.1){\small $Y_t^{(2)} = \delta_t \cdot Y_t^{(1)}$}
\uput{4pt}[10](4.4,3.6){\small{$\delta_0 \cdot y_{t_2} = f_{t_1-t_2}(y_{t_2}) \cdot y_{t_2}$}}
\uput{2pt}[0](4,-0.8){$S_x^-$}
\end{pspicture}
\end{center}

Let $h_t = \delta_t^* \delta_t$. Then 
\begin{align*}
 \delta_t^{-1} \frac{d (\delta_t)}{dt} & = \delta_t^{-1} \left( \frac{dg_t^{(2)}}{dt} (g_t^{(2)})^{-1} \right) \delta_t - \frac{dg_t^{(1)}}{dt} (g_t^{(1)})^{-1} \\
 & = \delta_t^{-1} \left( -i \left( \mu(g_t^{(2)} \cdot f_{t_1-t_2}(y_{t_2}) \cdot y_{t_2}) - \alpha \right) + \gamma(g_t^{(2)} \cdot f_{t_1-t_2}(y_{t_2}) \cdot y_{t_2}) \right) \delta_t \\
 & \quad \quad \quad + i (\mu(g_t^{(1)} \cdot y_{t_2}) - \alpha) - \gamma(g_t^{(1)} \cdot y_{t_2}) \\
 & = -i \left( \mu_{h_t}(g_t^{(1)}\cdot y_{t_2}) - \mu(g_t^{(1)} \cdot y_{t_2}) \right) + \left( \delta_t^{-1} \gamma( \delta_t \cdot g_t^{(1)} \cdot y_{t_2} ) \delta_t - \gamma(g_t^{(1)} \cdot y_{t_2}) \right)
\end{align*}
where in the last step we use the fact that $\delta_t \alpha \delta_t^{-1} = \alpha$ since $\alpha$ is central. Applying the Lipschitz bounds \eqref{eqn:mu-h-Lipschitz} and \eqref{eqn:gamma-lipschitz-bound} and then the distance decreasing formula \eqref{eqn:distance-decreasing} gives us
\begin{equation*}
\left\| \delta_t^{-1} \frac{d (\delta_t)}{dt} \right\| \leq C \sqrt{\sigma(h_t)} \leq C \sqrt{\sigma(h_0)} .
\end{equation*}
The estimate from \eqref{eqn:new-uniform-sigma-bound} shows that $\sigma(h_0) \leq K  \| e^{i \beta t_2} \cdot y_0 - x \|^2$. Therefore, for any $T$ such that $t_1 > t_2 \geq T > - \frac{1}{\lambda_k}$, the geodesic distance $d(\delta_0, \delta_{t_2})$ in $G_{\bf v}$ is bounded above by 
\begin{multline*}
\int_0^{t_2} \left\| \delta_t^{-1} \frac{d (\delta_t)}{dt} \right\|  \, dt \leq \int_0^{t_2} C \sqrt{K} \| e^{i \beta t_2} \cdot y_0 - x \|  \, dt = t_2 C' \| e^{i \beta t_2} \cdot y_0 - x \| \\
\leq t_2 C' e^{\lambda_k t_2} K \| y_0 - x \|  \leq T e^{\lambda_k T} C_3 \| y_0 - x \|
\end{multline*}
Combining this with the estimate \eqref{eqn:delta-0-bound} shows that 
\begin{multline}\label{eqn:cauchy-bound}
d(f_{t_1}(y_0), f_{t_2}(y_0)) = d(\id, \delta_{t_2}) \leq d(\id, \delta_0) + d(\delta_0, \delta_{t_2}) \\
 \leq C_1 e^{\lambda_k T} \| y_0 - x \| + C_2 e^{2\lambda_k T} \| y_0 - x \|^2 + C_3 T e^{\lambda_k T} \| y_0 - x \| 
\end{multline}
which can can be made arbitrarily small by making $T$ large. Therefore the sequence $f_t(y_0)$ is Cauchy in $G_{\bf v}$. Let $f_\infty(y_0)$ denote the limit. Since the action of $G_{\bf v}$ on $\Rep(Q, {\bf v})$ is continuous, then $f_t(y_0) \cdot y_0$ converges to $f_\infty(y_0) \cdot y_0$ in $\Rep(Q, {\bf v})$.
\end{proof}

As a consequence of the methods used above, we can derive the following estimate which is used in Theorem \ref{thm:homeo-slice-bundle}.

\begin{corollary}\label{cor:convergence-in-rep}
Let $C$ be a critical set. Then there exists a constant $K$ such that for any $x \in C$ and any $y_0 \in S_x^-$, we have
\begin{equation}\label{eqn:convergence-in-rep}
\| f_t(y_0) \cdot y_0 - f_\infty(y_0) \cdot y_0 \| \leq \left( K_1 e^{\lambda_k t} (t+1) \| y_0 - x \| + K_2 e^{2\lambda_k t} \| y_0 - x \|^2 \right) \| y_0 \|
\end{equation}
\end{corollary}

\begin{proof}
Since $\{f_t(y_0) \}$ is Cauchy, then the inequality \eqref{eqn:cauchy-bound} shows that 
\begin{equation*}
d(f_t(y_0), f_\infty(y_0)) \leq C_1' e^{\lambda_k t} (t+1) \| y_0 - x \| + C_2 e^{2\lambda_k t} \| y_0 - x \|^2 .
\end{equation*}
Since the action of $G_{\bf v}$ is continuous then there exists a constant $K$ such that $\| f_t(y_0) \cdot y_0 - f_\infty(y_0) \cdot y_0 \| \leq K d(f_t(y_0), f_\infty(y_0)) \| y_0 \|$. The result then follows after combining these two estimates.
\end{proof}

Next we show that $f_\infty(y_0) \cdot y_0$ lies in $W_{mod, x}^-$.

\begin{lemma}
Let $\varepsilon > 0$ be as in Lemma \ref{lem:uniform-bound-metric}. For each $y_0 \in S_x^-$ such that $\| y_0 - x \| < \varepsilon$, we have $f_\infty(y_0) \cdot y_0 \in W_{mod,x}^-$.
\end{lemma}

\begin{proof}
Since $f_t(y_0) \cdot y_0 \rightarrow f_\infty(y_0) \cdot y_0$ and the finite-time flow $\phi^{mod}(\cdot, t) : \Rep(Q, {\bf v}) \rightarrow \Rep(Q, {\bf v})$ depends continuously on the initial condition, then for each $T > 0$, we have $\phi^{mod}(f_t(y_0) \cdot y_0, -T) \rightarrow \phi^{mod}(f_\infty(y_0) \cdot y_0, -T)$. Therefore we can choose $t$ large so that $t \geq T$ and
\begin{equation}\label{eqn:ineq1}
\| \phi^{mod}(f_\infty(y_0) \cdot y_0, -T) - \phi^{mod}(f_t(y_0) \cdot y_0, -T) \| \leq \| e^{i \beta T} \cdot y_0 - x \| .
\end{equation}
Moreover, since $f_t(y_0) \cdot y_0 = \phi^{mod}(e^{i \beta t} \cdot y_0, t)$ by definition, for all $t \geq T$ we have 
\begin{equation*}
\phi^{mod}(f_t(y_0) \cdot y_0, -T) = \phi^{mod}(e^{i \beta t} \cdot y_0, t-T) = \phi^{mod}(e^{i \beta (t-T)} \cdot e^{i \beta T} \cdot y_0, t-T) = f_{t-T}(e^{i \beta T} \cdot y_0) \cdot (e^{i \beta T} \cdot y_0) .
\end{equation*}

In particular, the bound of \eqref{eqn:group-distance} applies, and so we have 
\begin{align}\label{eqn:ineq2}
\begin{split}
\| \phi^{mod}(f_t(y_0) \cdot y_0, -T) - e^{i \beta T} \cdot y_0 \| & = \| f_{t-T}(e^{i \beta T} \cdot y_0) \cdot (e^{i \beta T} \cdot y_0) - e^{i \beta T} \cdot y_0 \| \\
 & \leq C d \left( \id, f_{t-T}(e^{i \beta T} \cdot y_0) \right) \| e^{i \beta T} \cdot y_0 \| \\
 & \leq C \left( \| e^{-i \beta T} \cdot y_0 - x \| + \| e^{-i \beta T} \cdot y_0 - x \|^2 \right) \| e^{i \beta T} \cdot y_0 \|
\end{split}
\end{align}
for some positive constant $C$. Combining \eqref{eqn:ineq1} \eqref{eqn:ineq2} gives us 
\begin{align*}
\| \phi^{mod}(f_\infty(y_0) \cdot y_0, -T) - x \| & \leq \| \phi^{mod}(f_\infty(y_0) \cdot y_0, -T) - \phi^{mod}(f_t(y_0) \cdot y_0, -T) \| \\
 & \quad \quad + \| \phi^{mod}(f_t(y_0) \cdot y_0, -T) - e^{i \beta T} \cdot y_0 \| + \| e^{i \beta T} \cdot y_0 - x \| \\
 & \leq \| e^{i \beta T} \cdot y_0 - x \| + \| e^{i \beta T} \cdot y_0 - x \| \\
 & \quad \quad + C \left( \| e^{-i \beta T} \cdot y_0 - x \| + \| e^{-i \beta T} \cdot y_0 - x \|^2 \right) \| e^{i \beta T} \cdot y_0 \|
\end{align*}
Since the right-hand side converges to zero as $T \rightarrow \infty$ then $\lim_{T \rightarrow \infty} \phi(f_\infty(y_0) \cdot y_0, -T) = x$ and so $f_\infty(y_0) \cdot y_0 \in W_{mod,x}^-$.
\end{proof}

Let $\psi_x : S_x^- \rightarrow W_{mod,x}^-$ be the map $\psi_x(y_0) = f_\infty(y_0) \cdot y_0$. The final result of this section shows that $\psi_x$ is continuous.

\begin{lemma}\label{lem:fibrewise-uniformly-convergent}
There exists a neighbourhood $U$ of $x$ in $S_x^-$ such that $f_t(y_0) \cdot y_0$ converges uniformly to $f_\infty(y_0) \cdot y_0$ for all $y_0 \in U$. Therefore $f_\infty(y_0) \cdot y_0$ depends continuously on $y_0$.
\end{lemma}

\begin{proof}
To simplify the notation, given $y_0 \in S_x^-$ set $f_t = f_t(y_0)$ for each $t \in \mathbb{R}_{\geq 0}$. The inequality \eqref{eqn:cauchy-bound} shows that in a neighbourhood $U$ of $x$ in $S_x^-$ we have for all $t_1 > t_2 \geq T$
\begin{equation*}
d(f_{t_1}, f_{t_2}) \leq C_1 e^{\lambda_k T} \| y_0 - x \| + C_2  e^{2 \lambda_k T} \| y_0 - x \|^2 + C_3 T e^{\lambda_k T} \| y_0 - x \| 
\end{equation*}
and so there exists a constant $K$ such that
\begin{equation}\label{eqn:series-term-bound}
\| f_{t_1} \cdot y_0 - f_{t_2} \cdot y_0 \| \leq K e^{\lambda_k T} \left( C_1 \| y_0 - x \| + C_2 \| y_0 - x \|^2 + C_3 T \| y_0 - x \| \right) \| y_0 \|
\end{equation}
Therefore if $y_0$ is in a fixed neighbourhood of $x$ then $\| f_{t_1} \cdot y_0 - f_{t_2} \cdot y_0 \| \leq K_1' e^{\lambda_k T} + K_2' T e^{\lambda_k T}$ and so the sequence $f_t \cdot y_0$ is uniformly Cauchy with respect to the variable $y_0$, and therefore uniformly convergent. Since the finite-time terms $f_t \cdot y_0$ depend continuously on $y_0$, then this implies that $\psi_x(y_0) = f_\infty \cdot y_0$ also depends continuously on $y_0$. 
\end{proof}

\subsubsection{The reverse of the scattering method}\label{subsec:reverse-scattering}

In this section we carry out the reverse of the procedure described in the previous section. Given $y_0 \in W_{mod,x}^-$, let $y_t = g_t \cdot y_0$ denote the solution to the time $t$ modified flow \eqref{eqn:modified-flow-def} and define $f_t = g_t \cdot e^{i \beta t}$. The goal is to show that $f_t$ converges to the inverse of the map defined in the previous section. The key to the proof is that $f_t \in P_\beta$ for all $t$, and so the distance decreasing formula of Lemma \ref{lem:modified-distance-decreasing} applies (this is the reason for using the modified flow instead of the original gradient flow). 

\begin{figure}[ht]
\begin{center}
\begin{pspicture}(2,0)(10,5)
\pscurve(3.9,5)(4,4)(4.4,2.3)(5,1.2)(5.5,0.5)(6,0)
\psline[arrowsize=5pt]{->}(4,4)(4,1)
\pscurve[arrowsize=5pt]{->}(4,4)(4.4,2.3)(5,1.2)
\pscurve[arrowsize=5pt]{->}(4,1)(4.5,1.05)(5,1.2)
\psdots[dotsize=3pt](3.9,5)(4,4)(4,1)(5,1.2)
\uput{4pt}[180](3.9,5){\small{$x$}}
\uput{4pt}[180](4,4){\small{$y_0$}}
\uput{4pt}[180](4,1){\small{$e^{- i \beta t} \cdot y_0$}}
\uput{4pt}[0](5,1.2){\small{$g_t \cdot y_0$}}
\uput{2pt}[30](6,0){$W_{mod,x}^-$}
\uput{3pt}[270](4.5,1.05){\small{$f_t$}}
\uput{3pt}[30](4.4,2.3){\small{$g_t$}}
\uput{3pt}[180](4,2.3){\small{$e^{- i \beta t}$}}
\end{pspicture}
\end{center}
\end{figure}

First we derive a uniform bound on the metric $h_t$ in analogy with Lemma \ref{lem:uniform-bound-metric}. The proof differs slightly from Lemma \ref{lem:uniform-bound-metric}, since the point $y_0$ is not necessarily in the negative slice and so we need to derive some extra estimates after finding the bound \eqref{eqn:reverse-gronwall-bound}.
\begin{lemma}\label{lem:reverse-uniform-metric-bound}
There exists $\varepsilon > 0$ and a constant $K$ such that for any initial condition $y_0 \in W_{mod,x}^-$ and any $t > 0$ satisfying $\| e^{-i \beta t} \cdot y_0 - x \| + \| g_t \cdot y_0 - x \| < \varepsilon$ we have
\begin{equation*}
\sigma(h_t) \leq K \left( \| e^{-i \beta t} \cdot y_0 - x \| + \| g_t \cdot y_0 - x \| \right) .
\end{equation*}
\end{lemma}

\begin{proof}
The same calcuation as \eqref{eqn:gauge-difference-derivative} and \eqref{eqn:metric-derivative} shows that
\begin{equation}\label{eqn:reverse-gauge-difference-derivative}
\frac{df_t}{dt} f_t^{-1} = -i (\mu(g_t \cdot y_0) - \alpha) + \gamma(g_t \cdot y_0) +  f_t(i\beta) f_t^{-1}
\end{equation}
and
\begin{equation}\label{eqn:reverse-metric-derivative}
\frac{dh_t}{dt} = -2i h_t (\mu_h(e^{-i \beta t} \cdot y_0) - \alpha) + i \beta h_t + i h_t \beta .
\end{equation}

Taking the trace in the same way as the proof of Lemma \ref{lem:uniform-bound-metric} gives us the estimates
\begin{equation*}
\frac{d}{dt} \tr (h_t) \leq  2i \tr \left( (\mu(e^{-i \beta t} \cdot y_0) - \alpha - \beta) h_t \right) \leq C \tr(h_t) \| e^{-i \beta t} \cdot y_0 - x \|
\end{equation*}
and
\begin{equation*}
\frac{d}{dt} \tr (h_t^{-1}) \leq  -2i \tr \left( h_t^{-1} (\mu(e^{-i \beta t} \cdot y_0) - \alpha - \beta) \right) \leq C \tr(h_t^{-1}) \| e^{-i \beta t} \cdot y_0 - x \|
\end{equation*}
and so 
\begin{equation*}
\frac{d}{dt} \sigma(h_t) \leq C \sigma(h_t) \| e^{-i \beta t} \cdot y_0 - x \| + 2C \rank(Q, {\bf v}) \| e^{-i \beta t} \cdot y_0 - x \|
\end{equation*}
Therefore Gronwall's inequality implies that
\begin{equation}\label{eqn:reverse-gronwall-bound}
\sigma(h_t) \leq 2C \rank(Q, {\bf v}) \left( \int_0^t \| e^{-i \beta s} \cdot y_0 - x \| \, ds \right) \exp \left( \int_0^t C \| e^{-i \beta s} \cdot y_0 - x \| \, ds \right) 
\end{equation}
and so the problem of finding a bound for $\sigma(h_t)$ reduces to the problem of finding a bound for $\int_0^t \| e^{-i \beta s} \cdot y_0 - x \| \, ds$ in terms of $\| g_t \cdot y_0 - x \|$ and $\| e^{-i \beta t} \cdot y_0 - x \|$.

To find this bound we decompose $y_0 - x$ into eigenspaces $( y_0 - x )_+$, $( y_0 - x )_0$ and $( y_0 - x )_-$ according to whether the action of $e^{-i \beta}$ is has eigenvalues greater than one, equal to one, or less than one. Since $\| (e^{-i \beta s} \cdot y_0 - x)_+ \|$ is exponentially increasing with $s$ then there exists a constant $C_1$ such that  
\begin{equation*}
\int_0^t \|  (e^{-i \beta s} \cdot y_0 - x)_+ \| \, ds \leq C_1 \| (e^{-i \beta t} \cdot y_0 - x)_+ \| \leq C_1 \| e^{-i \beta t} \cdot y_0 - x \|.
\end{equation*}
Since $\| (e^{-i \beta s} \cdot y_0 - x)_0 \|$ is constant then  
\begin{equation*}
\int_0^t \|  (e^{-i \beta s} \cdot y_0 - x)_0 \| \, ds = t \| (y_0 - x)_0 \| \leq C t e^{-C_2' t} \| g_t \cdot y_0 - x \|.
\end{equation*}
where we use the bound $\| y_0 - x \| \leq C_1' e^{-C_2' t} \| g_t \cdot y_0 - x \|$ from Proposition \ref{prop:exponential-modified-convergence} (note that here $t > 0$ and so the exponent is $-C_2't$). The term $\| (e^{-i \beta s} \cdot y_0 - x)_- \|$ is exponentially decreasing with $s$, and so there is a constant $C_2$ such that
\begin{equation*}
\int_0^t \|  (e^{-i \beta s} \cdot y_0 - x)_- \| \, ds \leq C_2 \| (y_0 - x)_- \| \leq C C_2 e^{-C_2' t} \| g_t \cdot y_0 - x \|.
\end{equation*}
Substituting these into \eqref{eqn:reverse-gronwall-bound} and noting that the exponential terms are bounded if $\| e^{-i \beta t} \cdot y_0 - x \| + \| g_t \cdot y_0 - x \| < \varepsilon$ for small enough $\varepsilon$ shows that 
\begin{equation}\label{eqn:reverse-sigma-bound}
\sigma(h_t) \leq K \left( \| e^{-i \beta s} \cdot y_0 - x \| + \| g_t \cdot y_0 - x \| \right)
\end{equation}
for some positive constant $K$.
\end{proof}

Now we prove a uniform bound on $f_t$.

\begin{lemma}\label{lem:reverse-uniform-group-bound}
With the same conditions as Lemma \ref{lem:reverse-uniform-metric-bound} we have
\begin{equation}\label{eqn:reverse-uniform-group-bound}
d(\id, f_t) \leq C_1 \| e^{-i \beta t} \cdot y_0 - x \|^{\frac{1}{2}} + C_2 \| e^{- i \beta t} \cdot y_0 - x \|  .
\end{equation}
\end{lemma}

\begin{proof}
After conjugating \eqref{eqn:reverse-gauge-difference-derivative} by $f_t$, we obtain the same bound as in the proof of Lemma \ref{lem:group-distance}
\begin{align*}
f_t^{-1} \frac{df_t}{dt} & =  -i f_t^{-1} (\mu(g_t \cdot y_0) - \alpha) f_t + f_t \gamma(g_t \cdot y_0) f_t^{-1} + i \beta \\
 & = -i (\mu_{h_t}(e^{-i \beta t} \cdot y_0) - \alpha) + f_t \gamma(g_t \cdot y_0) f_t^{-1} + i \beta \\
 & = -i \left( \mu_{h_t}(e^{-i \beta t} \cdot y_0) - \mu(e^{-i \beta t} \cdot y_0) \right) -i \left( \mu(e^{-i \beta t} \cdot y_0) - \alpha - \beta \right) \\
 & \quad \quad \quad + \left( f_t \gamma(g_t \cdot y_0) f_t^{-1} - \gamma(e^{-i \beta t} \cdot y_0) \right) + \gamma(e^{-i \beta t} \cdot y_0)
\end{align*}
and therefore the Lipschitz bounds \eqref{eqn:gamma-lipschitz-bound}, \eqref{eqn:gamma-critical-lipschitz} and the moment map bound \eqref{eqn:mu-h-Lipschitz} lead to the inequality
\begin{equation*}
\left\| f_t^{-1} \frac{df_t}{dt} \right\| \leq K_1 \sqrt{\sigma(h_t)} + K_2 \| e^{-i \beta t} \cdot y_0 - x \|
\end{equation*}
Therefore Lemma \ref{lem:reverse-uniform-metric-bound} implies
\begin{equation*}
d(\id, f_t) \leq \int_0^t \left\| f_t^{-1} \frac{df}{dt} \right\| \, dt \leq C_1 \| e^{-i \beta t} \cdot y_0 - x \|^{\frac{1}{2}} + C_2 \| e^{- i \beta t} \cdot y_0 - x \|  .
\end{equation*}
\end{proof}

Now we use the uniform bound on the metric together with the distance-decreasing formula to show that the sequence is Cauchy. Fix $y_0 \in W_{mod,x}^-$ and for all $t < 0$ let $y_t = g_t(y_0) \cdot y_0$ be the solution to \eqref{eqn:modified-flow-def} and define $f_t(y_0) = e^{i \beta t} \cdot g_t(y_0)$. This is summarised in the diagram below.

\begin{figure}[ht]
\begin{center}
\begin{pspicture}(2,-1)(10,6)
\psline[arrowsize=5pt]{->}(5,3.6)(5,0.2)
\psline[arrowsize=5pt]{->}(4,5.1)(4,0)
\pscurve(3.5,6)(4,5.1)(5,3.6)(6,2.2)(7.4,0.5)
\pscurve[arrowsize=5pt]{->}(5,3.6)(4.4,3.53)(4,3.4)
\pscurve[arrowsize=5pt]{->}(7.4,0.5)(5,0.2)(4,0)
\psdots[dotsize=3pt](3.5,6)(4,5.1)(4,3.4)(4,0)(7.4,0.5)(5,3.6)(5,0.2)
\uput{4pt}[180](3.5,6){\small $x$}
\uput{4pt}[20](4,5){\small $y_{t_1} = g_{t_1}(y_0) \cdot y_0$}
\uput{4pt}[180](4,-0.2){\small $f_{t_1}(y_0) \cdot y_0 = e^{i \beta t_1} \cdot y_{t_1}$}
\uput{4pt}[180](4,3.4){\small $e^{i \beta (t_1-t_2)} \cdot y_{t_1}$}
\uput{4pt}[10](7.4,0.5){\small $y_0$}
\uput{5pt}[305](5,0.2){\small{$f_{t_2}(y_0) \cdot y_0 = e^{i\beta t_2} \cdot y_{t_2}$}}
\uput{4pt}[20](5,3.6){\small{$y_{t_2} = g_{t_2}(y_0) \cdot y_0$}}
\uput{2pt}[45](6.8,1.5){$W_{mod,x}^-$}
\end{pspicture}
\end{center}
\end{figure}

\begin{lemma}
There exists $\varepsilon' > 0$ such that the limit $f_\infty(y) := \lim_{t \rightarrow - \infty} f_t(y)$ exists in $P_\beta$ for any $y \in W_{mod,x}^-$ such that $\| y - x \| < \varepsilon'$.
\end{lemma}

\begin{proof}
First we show that for any $\varepsilon > 0$ there exists $\varepsilon' > 0$ such that $\| y_0 - x \| < \varepsilon'$ implies that $\| f_t(y_0) \cdot y_0 - x \| + \| y_0 - x \| < \varepsilon$, and therefore we can apply the estimate of Lemma \ref{lem:reverse-uniform-group-bound}.

First suppose that $\| y_0 - x \| = \frac{1}{2} \varepsilon$. If $\| f_t(y_0) \cdot y_0 - x \| < \frac{1}{2} \varepsilon$ for all such $y_0$ then we are done. If not, then there exists $t' > 0$ such that $\| e^{i \beta t'} \cdot f_t(y_0) \cdot y_0 - x \| = \frac{1}{2} \varepsilon$. Let $g_{-t'} \cdot y_0$ denote the time $t'$ upwards flow with initial condition $y_0$. Note that $\| g_{-t'} \cdot y_0 - x \| < \frac{1}{2} \varepsilon$. Then $e^{i \beta t'} \cdot f_t(y_0) \cdot y_0 = f_{t-t'}(g_{-t'} \cdot y_0) \cdot g_{-t'} \cdot y_0$ by definition. Since $\| g_{-t'} \cdot y_0 - x \| + \| f_{t-t'}(g_{-t'} \cdot y_0) \cdot g_{-t'} \cdot y_0 - x \| < \varepsilon$ then the bound of Lemma \ref{lem:reverse-uniform-group-bound} applies. 

Therefore $f_{t-t'}(g_{-t'} \cdot y_0)$ is in a bounded neighbourhood of the identity in $G_{\bf v}$. We also know that $x \in \overline{G_{\bf v} \cdot y_0} \setminus (G_{\bf v} \cdot y_0)$ and that $\| f_{t-t'}(g_{-t'} \cdot y_0) \cdot g_{-t'} \cdot y_0 - x \| = \frac{1}{2} \varepsilon$ and so there exists $\delta > 0$ such that $\| g_{-t'} \cdot y_0 - x \| > \delta$. Since $f_{t-t'}(g_{-t'} \cdot y_0)$ is uniformly bounded by Lemma \ref{lem:reverse-uniform-group-bound} then $\delta$ is uniform with respect to the choice of $y_0 \in W_{mod,x}^-$ such that $\| y_0 - x \| < \frac{1}{2} \varepsilon$. Therefore there exists $T$ such that $t' \leq T$ for any such $y_0$. 

Therefore given any $\varepsilon' > 0$ such that $\| g_{-T} \cdot y_0 - x \| < \varepsilon'$ for all $y_0 \in W_{mod,x}^-$ such that $\| y_0 - x \| < \frac{1}{2} \varepsilon$, for any $y \in W_{mod,x}^-$ such that $\| y - x \| < \varepsilon'$ we have $\| f_t(y) \cdot y - x \| + \| y - x \| < \varepsilon$ and now we can use the estimate of Lemma \ref{lem:reverse-uniform-group-bound} for any initial condition $y$ satisfying $\| y - x \| < \varepsilon'$.

Choose $y_0 \in W_{mod,x}^-$ such that $\| y_0 - x \| < \varepsilon'$. Given $t_1 < t_2 < 0$, let $\delta_0 = e^{i \beta (t_1 - t_2)} \cdot g_{t_1} \cdot g_{t_2}^{-1}$ and $\delta_{t_2} = f_{t_1}(y_0) f_{t_2}(y_0)^{-1} = e^{-i \beta t_2} \delta_0 e^{i \beta t_2}$. Lemma \ref{lem:reverse-uniform-group-bound} shows that there exists $T < 0$ such that $t_1, t_2 < T$ implies that $d(\id, \delta_0) < \varepsilon$. The distance-decreasing formula of Lemma \ref{lem:modified-distance-decreasing} then shows that 
\begin{equation*}
d(f_{t_1}(y_0), f_{t_2}(y_0)) = d(\id, \delta_{t_2}) = d(\id, e^{-i \beta t_2} \delta_0 e^{i \beta t_2}) \leq d(\id, \delta_0) < \varepsilon .
\end{equation*}
Therefore the sequence $f_t(y_0)$ is Cauchy and so it has a unique limit in $P_\beta$.
\end{proof}

For each $z_0 \in W_{mod,x}^-$ define $\varphi_x(y_0) = f_\infty(y_0) \cdot y_0$. The same proof as Lemma \ref{lem:fibrewise-uniformly-convergent} shows that the sequence $f_t(y_0)$ is uniformly Cauchy for $y_0$ in a bounded neighbourhood of $x$ in $W_{mod,x}^-$ and so $\varphi_x$ is continuous. The next result shows that $\varphi_x$ is a left inverse of the map $\psi_x : S_x^- \rightarrow W_{mod,x}^-$ defined in the previous section.

\begin{lemma}\label{lem:left-inverse}
$\varphi_x \circ \psi_x(y_0) = y_0$ for all $y_0 \in S_x^-$.
\end{lemma}

\begin{proof}
Let $z_0 = \psi_x(y_0)$. Then $z_0 = f_\infty(y_0) \cdot y_0$ and for all $\varepsilon > 0$ there exists $T$ such that $d(\id, f_\infty(e^{i \beta t} \cdot y_0)) < \varepsilon$ for all $t \geq T$. Since $f_\infty(e^{i \beta t} \cdot y_0) \in P_\beta$ then $d(\id, e^{-i \beta t} \cdot f_\infty(e^{i \beta t} \cdot y_0) \cdot e^{i \beta t}) \leq d(\id, f_\infty(e^{i \beta t} \cdot y_0))$ by the distance-decreasing property of Lemma \ref{lem:modified-distance-decreasing}. Therefore for all $t \geq T$ we have
\begin{equation*}
\| e^{-i \beta t} \cdot f_\infty(e^{i \beta t} \cdot y_0) \cdot e^{i \beta t} \cdot y_0 - y_0 \| \leq C \varepsilon
\end{equation*}
for a constant $C$ which is uniformly bounded when $y_0$ is in a fixed neighbourhood of $x$. This is true for all $\varepsilon > 0$ and so 
\begin{equation}\label{eqn:limit-inverse}
\lim_{t \rightarrow \infty} e^{-i \beta t} \cdot f_\infty(e^{i \beta t} \cdot y_0) \cdot e^{i \beta t} \cdot y_0 = y_0 .
\end{equation}

Since $\psi_x(y_0)$ is the time $t$ downwards flow of $f_\infty(e^{i \beta t} \cdot y_0) \cdot e^{i \beta t} \cdot y_0$ then 
\begin{equation*}
\varphi_x(\psi_x(y_0)) = \lim_{t \rightarrow \infty} e^{-i \beta t} \phi^{mod}(\psi_x(y_0), -t) = \lim_{t \rightarrow \infty} e^{-i \beta t} f_\infty(e^{i \beta t} \cdot y_0) \cdot e^{i \beta t} \cdot y_0 = y_0 .
\end{equation*}
and so $\varphi_x$ is a left inverse to $\psi_x$.
\end{proof}

The standard graph transform approach to constructing unstable manifolds (see for example \cite{HirschPughShub77}) determines a local homeomorphism $S_x^- \rightarrow W_{mod,x}^-$ in a neighbourhood of $x$. The next result shows that in fact there is a local homeomorphism determined by the action of $G_{\bf v}$ via the map $\psi_x$. The point of using $\psi_x$ is that (a) the standard methods do not work on a singular space, however the method used here does work since $\psi_x$ remains a homeomorphism after restricting to any singular subset $Z \subset \Rep(Q, {\bf v})$ preserved by $G_{\bf v}$, and (b) this allows us to classify the isomorphism classes in the unstable set in terms of those in the negative slice, which is used in the next section to interpret critical points connected by a flow line in terms of the Hecke correspondence.

\begin{proposition}
There is a neighbourhood $U$ of $x$ in $S_x^-$ and a neighbourhood $V$ of $x$ in $W_{mod,x}^-$ such that $\psi_{x}$ defines a homeomorphism of pairs $(U, U \setminus \{0\}) \cong (V, V \setminus \{x\})$.
\end{proposition}

\begin{proof}
Choose a ball $U$ centred at $0$ in $S_x^-$ of radius small enough such that the conditions of Lemma \ref{lem:fibrewise-uniformly-convergent} are satisfied and so the map $\psi_x$ is defined and continuous, and define $V := \psi_x(U)$. Then $\psi_x : U \rightarrow V$ has a continuous left inverse by Lemma \ref{lem:left-inverse}, and so it is a homeomorphism onto its image $V$. Therefore it only remains to show that $V$ is a neighbourhood of $x$ in $W_{mod,x}^-$.

First note that since $f_\infty(y_0)$ is uniformly bounded by Lemma \ref{lem:group-distance} and $x \in \overline{G_{\bf v} \cdot y_0} \setminus (G_{\bf v} \cdot y_0)$ then there exists $r > 0$ such that if $y_0 \in \partial U$ then $\psi_x(y_0) \notin B_r$, where $B_r$ is the ball of radius $r$ in $W_{mod,x}^-$.

Since $\psi_x(e^{i \beta t} \cdot y_0) = \phi^{mod}(\psi_x(y_0), -t)$ then we can define a continuous map $\partial U \rightarrow \partial B_r$ by using the modified flow to flow the image $\psi_x(\partial U)$ up to $\partial B_r$. This is is injective since $\psi_x$ is injective and $\psi_x(e^{i \beta t} \cdot y_0) = \phi^{mod}(\psi_x(y_0), -t)$. Therefore we have a continuous injective map between two spheres of the same dimension, which must also be surjective, since if there exists a point which is not in the image then we have a continuous map from a sphere into a ball of the same dimension from which the Borsuk-Ulam theorem contradicts injectivity. Therefore the image $V = \psi_x(U)$ contains the ball of radius $r$ in $W_{mod,x}^-$, and so it is a neighbourhood of zero.
\end{proof}

\subsubsection{The negative slice is homeomorphic to the unstable manifold}\label{subsec:slice-homeo-unstable}

Fix a critical set $C$, and for each $x \in C$ define $\psi_x : S_x^- \rightarrow W_{mod,x}^-$ by $\psi_x(y_0) :=  f_{\infty}(y_0) \cdot y_0$. Since $\psi_x$ is surjective and $\varphi_x$ is a left-inverse then $\varphi_x : W_{mod,x}^- \rightarrow S_x^-$. Recall the definition of the negative slice bundle $S_C^-$ and the unstable bundle $W_C^-$ from Definition \ref{def:slice-bundle}. The goal of this section is to show that $\psi_x$ can be extended to a homeomorphism $\psi : S_C^- \rightarrow W_C^-$. We first prove this result on the smooth space $\Rep(Q, {\bf v})$ where $S_C^-$ and $W_C^-$ are bundles over $C$ with smooth fibres, and then note that since this homeomorphism $\psi$ is defined using the action of $G_{\bf v}$, then it remains a homeomorphism on restriction to any closed $G_{\bf v}$-invariant subset of $\Rep(Q, {\bf v})$.

Lemma \ref{lem:fibrewise-uniformly-convergent} shows that $\psi$ and $\varphi$ are continuous on each fibre of $S_C^-$. The next result shows that $\psi$ and $\varphi$ are continuous on $S_C^-$ and hence define a homeomorphism.

\begin{theorem}\label{thm:homeo-slice-bundle}
There is a $K_{\bf v}$-equivariant homeomorphism of a neighbourhood of $C$ in $S_C^-$ with a neighbourhood of $C$ in $W_C^-$.
\end{theorem}

\begin{proof}
Proposition \ref{prop:modified-flow-homeo} shows that it is sufficient to show that $\psi : S_C^- \rightarrow W_{mod,C}^-$ is a homeomorphism in a neighbourhood of $C$. The results of the previous section show that $\psi$ is injective, surjective and the restriction to each fibre of $S_C^-$ is continuous. Therefore it only remains to show that $\psi$ is continuous on all of $S_C^-$ and that $\varphi$ is continuous on all of $W_C^-$. 

Given $\varepsilon > 0$ and $y_0 \in S_x^-$, use Corollary \ref{cor:convergence-in-rep} to choose $T$ such that $\| f_T(y_0) \cdot y_0 - \psi(y_0) \| < \varepsilon$. Since the action of $e^{i \beta T}$ is continuous and the time $T$ flow depends continuously on the initial condition, then there exists $\delta$ such that for all $y \in S_C^-$ such that $\| y - y_0 \| < \delta$ we have $\| f_T(y) \cdot y - f_T(y_0) \cdot y_0 \| < \varepsilon$. Since the constants in Corollary \ref{cor:convergence-in-rep} are uniform over the critical set, then (after shrinking $\delta$ if necessary) we can arrange it so that $\| f_T(y) \cdot y - \psi(y) \| < 2 \varepsilon$ for all $y$ such that $\| y - y_0 \| < \delta$.

In conclusion, given $\varepsilon > 0$ we have constructed $\delta > 0$ such that $\| y - y_0 \| < \delta$ implies that 
\begin{align*}
\| \psi(y) - \psi(y_0) \| & \leq \| \psi(y) - f_T(y) \cdot y \| + \| f_T(y) \cdot y - f_T(y_0) \cdot y_0 \| + \| f_T(y_0) \cdot y_0 - \psi(y_0) \| \\
 & < 2 \varepsilon + \varepsilon + \varepsilon = 4 \varepsilon.
\end{align*}
Therefore $\psi$ is continuous. Since the constants in Lemma \ref{lem:reverse-uniform-group-bound} are uniform over the critical set then the same proof shows that $\varphi$ is also continuous.

This shows that $S_C^-$ is homeomorphic to the unstable set $W_{mod,C}^-$ for the modified flow \eqref{eqn:modified-flow-def}. Together with the result of Proposition \ref{prop:modified-flow-homeo} this shows that $S_C^-$ is homeomorphic to the unstable set for the gradient flow \eqref{eqn:neg-grad-flow}. The $K_{\bf v}$-equivariance of this homeomorphism follows from the $K_{\bf v}$-equivariance of the modified flow \eqref{eqn:modified-flow-def}, the linearised flow $e^{i \beta t} = e^{i \mu(x) t}$ and the negative slice $k \cdot S_x^- = S_{k \cdot x}^-$ for all $k \in K_{\bf v}$.
\end{proof}

\begin{corollary}\label{cor:iso-classes-correspond}
The isomorphism classes in $S_x^-$ are in bijective correspondence with the isomorphism classes in $W_x^-$.
\end{corollary}

\begin{proof}
Since the map $\psi_x$ is defined using the action of $G_{\bf v}$, then the previous lemma together with Proposition \ref{prop:modified-flow-homeo} shows that the statement is true in a neighbourhood of $x$. Since $e^{i \beta t} \cdot y \rightarrow x$ for all $y \in S_x^-$, then any representation in $S_x^-$ is isomorphic to one in this neighbourhood. Similarly, $\lim_{t \rightarrow - \infty} \phi(y, t) = x$ for all $z \in W_x^-$ and so the same is true for $W_x^-$. Therefore the isomorphism classes of $S_x^-$ are in bijective correspondence with those of $W_x^-$. 
\end{proof}

Since $\psi(y) \in G_{\bf v} \cdot y$ for all $y \in S^-$, then $\psi$ is still a homeomorphism on restriction to any subset of $\Rep(Q, {\bf v})$ which is a union of $G_{\bf v}$-orbits. An important special case is the subset $\nu^{-1}(0)$, where $\nu : \Rep(Q, {\bf v}) \rightarrow \Rel(Q, {\bf v}, \mathcal{R})$ is the function from \eqref{eqn:relation-function} determined by a set of relations in the path algebra of $Q$.

\begin{corollary}\label{cor:homeo-restrict-to-singular}
Let $Z \subset \Rep(Q, {\bf v})$ be a closed $G_{\bf v}$-invariant subset. Then the restriction $\psi : S_C^- \cap Z \rightarrow W_C^- \cap Z$ is a $K_{\bf v}$-equivariant homeomorphism of a neighbourhood $U$ of $C$ in $S_C^- \cap Z$ with a neighbourhood $V$ of $C$ in $W_C^- \cap Z$. This determines a $K_{\bf v}$-equivariant homeomorphism of pairs $\left. \psi \right|_U (U, U \setminus C) \stackrel{\cong}{\longrightarrow} (V, V \setminus C)$. 
\end{corollary}

\begin{remark}
The condition that $Z \subset \Rep(Q, {\bf v})$ is closed and $G_{\bf v}$-invariant is sufficient to apply this result to the subset of representations satisfying a given set of relations. More generally, we only need that the subset $Z$ is preserved by (a) the gradient flow of $\| \mu - \alpha \|^2$ and (b) the action of $e^{i \beta t}$, where $\beta = \mu(x) - \alpha$ for some critical point $x$. Therefore the above result also applies after reducing the structure group of $G_{\bf v}$ to a subgroup (for example when studying real structures in analogy with the work of Hitchin on real Higgs bundles in \cite{Hitchin92}).
\end{remark}

\subsection{Gradient flow lines and the Hecke correspondence}\label{subsec:flow-classification}

The goal of this section is to classify the gradient flow lines connecting critical points in the space of representations of a quiver. Since the gradient flow is generated by the action of $G_{\bf v}$, then these results remain valid on restricting to any closed $G_{\bf v}$-invariant subset of $\Rep(Q, {\bf v})$.

Let $x_u$ and $x_\ell$ be two critical points connected by a flow line. Therefore there exists $y \in W_{x_u}^-$ such that $\lim_{t \rightarrow -\infty} \phi(y, t) = x_u$ and $\lim_{t \rightarrow \infty} \phi(y, t) = x_\ell$. Theorem \ref{thm:homeo-slice-bundle} shows that there exists a corresponding $\delta x \in S_{x_u}^-$ and $g \in G_{\bf v}$ such that $y = g \cdot (x_u + \delta x)$ and that every $\delta x \in S_{x_u}^-$ corresponds to some $y \in W_{x_u}^-$ in this way. Therefore we can determine the possible limits $x_\ell = \lim_{t \rightarrow \infty} \phi(y, t)$ by classifying the possible graded objects of the Harder-Narasimhan-Jordan-H\"older filtration of the representations $x_u + \delta x$ for $\delta x \in S_x^-$. In summary, the analytic question of classifying critical points connected by flow lines has been reduced to an algebraic question, which we will address in this section using homological algebra.

\subsubsection{Homological algebra for quivers with relations}\label{subsec:homological-algebra}

In this section we recall some results from homological algebra and use these to prove some preliminary results which will be used to study gradient flow lines in Section \ref{subsec:grad-flow-spaces}. Throughout this section, $\alpha$ is the stability parameter from Definition \ref{def:aasp} with respect to a fixed dimension vector ${\bf v}$. Fix a finite set of relations on the quiver and consider the subset $\nu^{-1}(0) \subset \Rep(Q, {\bf v})$ defined in \eqref{eqn:relation-function}. Let $x$ be a critical point in $\nu^{-1}(0)$. Recall from Proposition \ref{prop:singular-critical-decomp} that $x$ is the direct sum of subrepresentations $x_1 \oplus x_2$ with respective dimension vectors ${\bf v_1}$ and ${\bf v_2} = {\bf v} - {\bf v_1}$. Let ${\bf v_1}$ be the dimension vector which is non-zero at the vertex $\infty$ from Definition \ref{def:aasp}. Proposition \ref{prop:singular-critical-decomp} shows that $x_1$ is stable with $\mu(x_1) = k \alpha(Q, {\bf v_1}) \cdot \id$, where $\alpha(Q, {\bf v_1}) < 0$, and that $x_2$ is semistable with $\mu(x_2) = 0$. Now Lemma \ref{lem:slice-linearises} shows that $S_x^- \cong \Hom^1(Q, {\bf v_2}, {\bf v_1}) \cap \ker (\rho_x^\C)^* \cap \ker d \nu_x$. Therefore $S_x^-$ is isomorphic (as a vector space) to the middle cohomology of the complex
\begin{equation}\label{eqn:neg-def-complex}
\Hom^0(Q, {\bf v_2}, {\bf v_1}) \stackrel{\rho_x^\C}{\longrightarrow} \Hom^1(Q, {\bf v_2}, {\bf v_1}) \stackrel{d \nu_x}{\longlongrightarrow} \Rel(Q, {\bf v}, \mathcal{R})
\end{equation}
Denote the cohomology groups of this complex by $\mathcal{H}^0(Q, {\bf v_2}, {\bf v_1}) = \ker \rho_x^\C$ and $\mathcal{H}^1(Q, {\bf v_2}, {\bf v_1}) = \ker (\rho_x^\C)^* \cap \ker d \nu_x$. Given two representations $x_s \in \Rep(Q, {\bf v_s})$ and $x_q \in \Rep(Q, {\bf v_q})$, the extensions $0 \rightarrow x_s \rightarrow x \rightarrow x_q \rightarrow 0$ are parametrised up to isomorphism by the extension classes $\ker (\rho_x^\C)^* \subset \Hom^1(Q, {\bf v_q}, {\bf v_s})$. In the presence of a set of relations $\mathcal{R}$, we impose the extra condition that the homomorphisms $x_s \hookrightarrow x$ and $x \twoheadrightarrow x_q$ are compatible with the relations. This is summarised in the following lemma.

\begin{lemma}\label{lem:extension-slice}
The extension classes are parametrised by $S_x^- \cong \mathcal{H}^1(Q, {\bf v_2}, {\bf v_1})$.
\end{lemma}

Consider a representation $x \in \nu^{-1}(0) \subset \Rep(Q, {\bf v})$ given by an extension $0 \rightarrow x_s \rightarrow x \rightarrow x_q \rightarrow 0$ and let ${\bf v} = {\bf v_s} + {\bf v_q}$ be the corresponding decomposition of the dimension vector ${\bf v}$. Now let $x' \in \nu^{-1}(0) \subset \Rep(Q, {\bf v'})$ be another representation. Then there is a long exact sequence of cohomology groups
\begin{equation}\label{eqn:les-quiver-cohomology}
\mathcal{H}^0(Q, {\bf v'}, {\bf v_s}) \rightarrow \mathcal{H}^0(Q, {\bf v'}, {\bf v}) \rightarrow \mathcal{H}^0(Q, {\bf v'}, {\bf v_q}) \rightarrow \mathcal{H}^1(Q, {\bf v'}, {\bf v_s}) \rightarrow \mathcal{H}^1(Q, {\bf v'}, {\bf v}) \rightarrow \cdots
\end{equation}

In an analogous way to extensions of holomorphic bundles, the extension class $e \in \mathcal{H}^1(Q, {\bf v_q}, {\bf v_s})$ is the image of the identity homomorphism in $\mathcal{H}^0(Q, {\bf v_q}, {\bf v_q})$ by the connecting homomorphism in the long exact sequence \eqref{eqn:les-quiver-cohomology} with $x' = x_q$. Similarly, $e$ is also the image of $\id \in \mathcal{H}^0(Q, {\bf v_s}, {\bf v_s})$ by the connecting homomorphism in the long exact sequence
\begin{equation*}
\mathcal{H}^0(Q, {\bf v_q}, {\bf v_s}) \rightarrow \mathcal{H}^0(Q, {\bf v}, {\bf v_s}) \rightarrow \mathcal{H}^0(Q, {\bf v_s}, {\bf v_s}) \rightarrow \mathcal{H}^1(Q, {\bf v_q}, {\bf v_s}) \rightarrow \mathcal{H}^1(Q, {\bf v}, {\bf v_s}) \rightarrow \cdots
\end{equation*}

The following lemma is a quiver analog of \cite[Lem. 3.1]{NarasimhanRamanan69} for holomorphic bundles.

\begin{lemma}\label{lem:narasimhan-ramanan}
Let $x \in \nu^{-1}(0) \subset \Rep(Q, {\bf v})$ be an extension of representations $0 \rightarrow x_s \rightarrow x \rightarrow x_q \rightarrow 0$ with extension class $e \in \mathcal{H}^1(Q, {\bf v_q}, {\bf v_s})$, and let ${\bf v} = {\bf v_s} + {\bf v_q}$ be the corresponding decomposition of the dimension vector of the representation $x$. Given a positive dimension vector ${\bf d}$, let $x_q'$ be a subrepresentation of $x_q$ with dimension vector ${\bf v_q'} = {\bf v_q} - {\bf d}$ and associated inclusion map $i \in \mathcal{H}^0(Q, {\bf v_q'}, {\bf v_q})$. Then $x_q'$ lifts to a subrepresentation of $x$ if and only if the extension class $e$ is in the kernel of the pullback map $i^* : \mathcal{H}^1(Q, {\bf v_q}, {\bf v_s}) \rightarrow \mathcal{H}^1(Q, {\bf v_q'}, {\bf v_s})$. 

In the same way, if $x_s$ is a subrepresentation of $x_s'$ with dimension vector ${\bf v_s'} = {\bf v_s} + {\bf d}$ then $x_s'$ is a quotient representation of $x$ if and only if the extension class $e$ is in the kernel of $\mathcal{H}^1(Q, {\bf v_q}, {\bf v_s}) \rightarrow \mathcal{H}^1(Q, {\bf v_q}, {\bf v_s'})$.
\end{lemma}

\begin{proof}
We have the following commutative diagram of long exact sequences associated to $x_q' \stackrel{i}{\hookrightarrow} x_q$.
\begin{equation*}
\xymatrix{
 & & \cdots \ar[r] & \mathcal{H}^0(Q, {\bf v_q}, {\bf v_q}) \ar[d] \ar[r] & \mathcal{H}^1(Q, {\bf v_q}, {\bf v_s}) \ar[d] \ar[r] & \cdots \\
\cdots \ar[r] & \mathcal{H}^0(Q, {\bf v_q'}, {\bf v_s}) \ar[r] & \mathcal{H}^0(Q, {\bf v_q'}, {\bf v}) \ar[r] & \mathcal{H}^0(Q, {\bf v_q'}, {\bf v_q}) \ar[r] & \mathcal{H}^1(Q, {\bf v_q'}, {\bf v_s}) \ar[r] & \cdots 
}
\end{equation*}

The extension class $e \in \mathcal{H}^1(Q, {\bf v_q}, {\bf v_s})$ and the inclusion $i \in \mathcal{H}^0(Q, {\bf v_q'}, {\bf v_q})$ are the images of $\id \in \mathcal{H}^0(Q, {\bf v_q}, {\bf v_q})$ in $\mathcal{H}^1(Q, {\bf v_q}, {\bf v_s})$ and $\mathcal{H}^0(Q, {\bf v_q'}, {\bf v_q})$ respectively. The long exact sequence above shows that $i$ lifts to a homomorphism $\tilde{i} \in \mathcal{H}^0(Q, {\bf v_q'}, {\bf v})$ iff it maps to zero in $\mathcal{H}^1(Q, {\bf v_q'}, {\bf v_s})$, which occurs iff the extension class $e$ maps to zero in $\mathcal{H}^1(Q, {\bf v_q'}, {\bf v_s})$.

The second statement follows from an analogous argument with the long exact sequences associated to $x_s \hookrightarrow x_s'$.
\begin{equation*}
\xymatrix{
 & & \cdots \ar[r] & \mathcal{H}^0(Q, {\bf v_s}, {\bf v_s}) \ar[d] \ar[r] & \mathcal{H}^1(Q, {\bf v_q}, {\bf v_s}) \ar[d] \ar[r] & \cdots \\
\cdots \ar[r] & \mathcal{H}^0(Q, {\bf v_q}, {\bf v_s'}) \ar[r] & \mathcal{H}^0(Q, {\bf v}, {\bf v_s'}) \ar[r] & \mathcal{H}^0(Q, {\bf v_s}, {\bf v_s'}) \ar[r] & \mathcal{H}^1(Q, {\bf v_q}, {\bf v_s'}) \ar[r] & \cdots 
}
\end{equation*}

\end{proof}

Now we can define Hecke modifications of quivers with relations. First consider the case of a one-dimensional Hecke modification at a single vertex of $Q$. In analogy with \cite{witten-hecke} we call these \emph{miniscule Hecke modifications}.

\begin{definition}
Let $k \in \mathcal{I}$ be a vertex of the quiver $Q$. A pair of representations $(x_1, x_2) \in \nu_{\bf v_1 - e_k}^{-1}(0) \times \nu_{\bf v_1}^{-1}(0)$ is \emph{related by a Hecke modification} if $x_1$ is a subrepresentation of $x_2$ (cf. \cite[Sec. 10]{Nakajima94}, \cite[Sec. 5]{Nakajima98}). Therefore we have the subset
\begin{multline*}
\tilde{\mathcal{B}}_k(Q, {\bf v_1}, \mathcal{R}) := \\
 \{ (x_1, x_2) \in \nu_{\bf v_1 - e_k}^{-1}(0)^{\alpha-st} \times \nu_{\bf v_1}^{-1}(0)^{\alpha-st} \mid \text{$x_1$ and $x_2$ are related by a Hecke modification} \} .
\end{multline*}
The \emph{Hecke correspondence} $\mathcal{B}_k(Q, {\bf v_1}, \mathcal{R})$ (see \cite{Nakajima94}, \cite{Nakajima98}) is the subvariety of $\mathcal{M}(Q, {\bf v_1}-{\bf e_k}, \mathcal{R}) \times \mathcal{M}(Q, {\bf v_1}, \mathcal{R})$ induced from the quotient maps $\nu_{\bf v_1 - e_k}^{-1}(0)^{\alpha-st} \rightarrow \mathcal{M}(Q, {\bf v_1}-{\bf e_k}, \mathcal{R})$ and $\nu_{\bf v_1}^{-1}(0)^{\alpha-st} \rightarrow \mathcal{M}(Q, {\bf v_1}, \mathcal{R})$
\begin{equation*}
\xymatrix{
 & \tilde{\mathcal{B}}_k(Q, {\bf v_1}, \mathcal{R}) \ar[dl] \ar[dr] \ar[d] & \\
\nu_{\bf v_1 - e_k}^{-1}(0)^{\alpha-st} \ar[d] & \ar[dl] \mathcal{B}_k(Q, {\bf v_1}, \mathcal{R}) \ar[dr] & \nu_{\bf v_1}^{-1}(0)^{\alpha-st} \ar[d] \\
\mathcal{M}(Q, {\bf v_1}-{\bf e_k}, \mathcal{R}) & & \mathcal{M}(Q, {\bf v_1}, \mathcal{R})
}
\end{equation*}
\end{definition}
Equivalently, a miniscule Hecke modification corresponds to a choice of surjective homomorphism $V_k \stackrel{v}{\longrightarrow} \C$ at a single vertex $k \in \mathcal{I}$ such that the following diagram commutes. The subrepresentation $x_2$ is the induced representation in $\Rep(Q, {\bf v_1} - {\bf e_k})$.
\begin{equation*}
\xymatrix{
0 \ar[r] & \Vect(Q, {\bf v_1} - {\bf e_k}) \ar[r] \ar@{-->}[d]^{x_2} & \Vect(Q, {\bf v_1}) \ar[r]^(0.65)v \ar[d]^{x_1} & \C \ar[d]^{\id} \ar[r] & 0 \\
0 \ar[r] & \Vect(Q, {\bf v_1} - {\bf e_k}) \ar[r] & \Vect(Q, {\bf v_1}) \ar[r]^(0.65)v & \C \ar[r] & 0
}
\end{equation*}
More generally, one can define multiple Hecke modifications associated to a dimension vector ${\bf d} = ( n_k )_{k \in \mathcal{I}} > {\bf 0}$ as follows (see also \cite{Nakajima94}, \cite{Nakajima98}). Given $x_1 \in \nu_{\bf v_1}^{-1}(0)^{\alpha-st}$ and a subrepresentation $x_2 \in \nu_{{\bf v_1}-{\bf d}}^{-1}(0)^{\alpha-st}$, at each vertex $k \in \mathcal{I}$, choose a homomorphism $V_k \stackrel{v_k}{\longrightarrow} \C^{n_k}$ such that the following diagram commutes.
\begin{equation*}
\xymatrix{
0 \ar[r] & \Vect(Q, {\bf v_1} - {\bf d}) \ar[r] \ar@{-->}[d]^{x_2} & \Vect(Q, {\bf v_1}) \ar[r]^{\oplus v_k} \ar[d]^{x_1} & \bigoplus_{k \in \mathcal{I}} \C^{n_k} \ar[d]^{\id} \ar[r] & 0 \\
0 \ar[r] & \Vect(Q, {\bf v_1} - {\bf d}) \ar[r] & \Vect(Q, {\bf v_1}) \ar[r]^{\oplus v_k} & \bigoplus_{k \in \mathcal{I}} \C^{n_k} \ar[r] & 0
}
\end{equation*}

Given a stable representation $x_1$ (recall that we are using the stability parameter from Definition \ref{def:aasp}) and a Hecke modification $\bigoplus_{k \in \mathcal{I}} v_k : \Vect(Q, {\bf v}) \rightarrow \bigoplus_{k \in \mathcal{I}} \C^{n_k}$ as above, the stability of $x_2$ follows automatically, since if there exists a subrepresentation $x'$ of $x_2$ with dimension vector ${\bf v'}$ such that $\slope_\alpha(Q, {\bf v'}) > \slope_\alpha(Q, {\bf v_2}) \geq 0$ then $\slope_\alpha(Q, {\bf v'}) > 0 > \slope_\alpha(Q, {\bf v_1})$ which contradicts the fact that $x_1$ is stable. 

This is summarised in the following lemma.

\begin{lemma}
Let ${\bf d} > {\bf 0}$ be a dimension vector and let $x_1 \in \nu_{\bf v_1}^{-1}(0)$ and $x_2 \in \nu_{{\bf v_1} - {\bf d}}^{-1}(0)$ be two representations related by a Hecke modification. Then $x_1$ stable implies $x_2$ stable. 
\end{lemma}

Conversely, if $x_2$ is stable, then $x_1$ is not necessarily stable (for example, it could be the direct sum of $x_2$ with the trivial representation). 

\begin{lemma}\label{lem:non-trivial-kernel}
Let ${\bf d} > {\bf 0}$ be a dimension vector and let $x_1 \in \nu_{\bf v_1}^{-1}(0)^{\alpha-st}$ and $x_1' \in \nu_{{\bf v_1} - {\bf d}}^{-1}(0)^{\alpha-st}$ be two stable representations related by a Hecke modification such that $\slope_\alpha(x_1') < \slope_\alpha(x_1) < 0$. Let $x_2 \in \nu_{\bf v_2}^{-1}(0)^{\alpha-ss}$ be a semistable representation with positive slope. Then the kernel of $\mathcal{H}^1(Q, {\bf v_2}, {\bf v_1-d}) \rightarrow \mathcal{H}^1(Q, {\bf v_2}, {\bf v_1})$ is isomorphic to $\mathcal{H}^0(Q, {\bf v_2}, {\bf d})$. In particular, this has non-zero dimension if and only if $\mathcal{H}^0(Q, {\bf v_2}, {\bf d})$ has non-zero dimension.
\end{lemma}

\begin{proof}
Since $x_1$ and $x_1'$ are related by a Hecke modification, then there is an exact sequence
\begin{equation*}
0 \rightarrow x_1' \rightarrow x_1 \rightarrow x_1 / x_1' \rightarrow 0
\end{equation*}
This induces a long exact sequence in cohomology
\begin{equation*}
\mathcal{H}^0(Q, {\bf v_2}, {\bf v_1 - d}) \rightarrow \mathcal{H}^0(Q, {\bf v_2}, {\bf v_1}) \rightarrow \mathcal{H}^0(Q, {\bf v_2}, {\bf d}) \rightarrow \mathcal{H}^1(Q, {\bf v_2}, {\bf v_1 - d}) \rightarrow \mathcal{H}^1(Q, {\bf v_2}, {\bf v_1}) \rightarrow \cdots
\end{equation*}
Since $x_1$ and $x_2$ are semistable and $\slope_\alpha(Q, {\bf v_1}) < 0 = \slope_\alpha(Q, {\bf v_2})$ then $\mathcal{H}^0(Q, {\bf v_2}, {\bf v_1}) = 0$. Therefore the kernel of $\mathcal{H}^1(Q, {\bf v_2}, {\bf v_1 - d}) \rightarrow \mathcal{H}^1(Q, {\bf v_2}, {\bf v_1})$ is isomorphic to $\mathcal{H}^0(Q, {\bf v_2}, {\bf d})$.
\end{proof}

Now we give a simple condition for $\mathcal{H}^0(Q, {\bf v_2}, {\bf d})$ to have non-zero dimension which applies in the case of Theorem \ref{thm:miniscule-flow-hecke}. Let ${\bf d} = {\bf e_k}$ for some vertex $k \in \mathcal{I}$ and let $x_1' \in \Rep(Q, {\bf v_1} - {\bf e_k})$ be a subrepresentation of $x_1 \in \Rep(Q, {\bf v_1})$. If the quiver $Q$ has no loops at the vertex $k$ (i.e. there are no edges $e \in \mathcal{E}$ such that $h(e) = t(e) = k$), then let $x_2 \in \Rep(Q, {\bf v_2})$ be the zero representation. Then the map $\rho_x^\C$ from \eqref{eqn:neg-def-complex} is zero, and so $\mathcal{H}^0(Q, {\bf v_2}, {\bf e_k}) \cong \C^{({\bf v_2})_k}$, where $({\bf v_2})_k$ denotes the dimension of ${\bf v_2}$ at the vertex $k \in \mathcal{I}$.

If the quiver has loops $e_1, \ldots, e_\ell$ at the vertex $k$, then $x_1 / x_1' \in \Rep(Q, {\bf e_k}) \cong \C^\ell$. Write $(A_1, \ldots, A_\ell) := x_1 / x_1'$ with each $A_i \in \C$. Then define $x_2 \in \Rep(Q, {\bf v_2})$ as a representation which is equal to $A_i \cdot \id$ on the edge $e_i$, and zero elsewhere. Note that any relation in the path algebra satisfied by $x_1 / x_1'$ is also satisfied by $x_2$ and vice-versa. Then once again, the map $\rho_x^\C$ from \eqref{eqn:neg-def-complex} is zero, and so $\mathcal{H}^0(Q, {\bf v_2}, {\bf e_k}) \cong \C^{({\bf v_2})_k}$.

These results are summarised in the following lemma.

\begin{lemma}\label{lem:positive-dimension}
Let $x_1' \in \Rep(Q, {\bf v_1} - {\bf e_k})$ be a subrepresentation of $x_1 \in \Rep(Q, {\bf v_1})$. If $Q$ has no loops at $k \in \mathcal{I}$, then let $x_2 \in \Rep(Q, {\bf v_2})$ be the zero representation. If $Q$ has at least one loop at $k$, then define $x_2$ as above. Then $\mathcal{H}^0(Q, {\bf v_2}, {\bf e_k}) \cong \C^{({\bf v_2})_k}$.
\end{lemma}

\subsubsection{Spaces of gradient flow lines}\label{subsec:grad-flow-spaces}

In this section we relate the homological algebra of the previous section to the problem of constructing gradient flow lines between critical points. The main theorem is Theorem \ref{thm:miniscule-flow-hecke} which shows that critical points connected by flow lines determine the Hecke correspondence.

Throughout this section we fix a dimension vector ${\bf v}$ and consider the subvariety $\nu^{-1}(0) \subset \Rep(Q, {\bf v})$ determined by a finite set of relations on the quiver. Consider a pair of critical points $x_u = x_1^u \oplus x_2^u$ and $x_\ell = x_1^\ell \oplus x_2^\ell$ connected by a flow line $\{ y_t \, : \, t \in \R \} \subset \nu^{-1}(0)$. Let ${\bf v} = {\bf v_1^u} + {\bf v_2^u} = {\bf v_1^\ell} + {\bf v_2^\ell}$ be the corresponding decomposition of the dimension vector. We always use the convention that ${\bf v_1^u}$ and ${\bf v_1^\ell}$ are non-zero on the vertex $\infty$ from Definition \ref{def:aasp}.

Theorem \ref{thm:homeo-slice-bundle} shows that there exists $t$ such that $y_t \in W_{x_u}^-$ is isomorphic to a representation $z \in S_x^-$. Lemma \ref{lem:extension-slice} shows that for the stability parameter of Definition \ref{def:aasp}, the representation $z$ is isomorphic to an extension $0 \rightarrow x_1^u \rightarrow z \rightarrow x_2^u \rightarrow 0$. Let $e \in \mathcal{H}^1(Q, {\bf v_2}, {\bf v_1})$ be the extension class of $0 \rightarrow x_1^u \rightarrow z \rightarrow x_2^u \rightarrow 0$. 

Since $x_\ell$ is the downward limit of the flow with initial condition $z$, then $x_\ell$ is isomorphic to the graded object of the HNJH filtration of $z$ (Theorem \ref{thm:algebraic-flow-limit}) and therefore we have an extension $0 \rightarrow (x_2^\ell)' \rightarrow z \rightarrow (x_1^\ell)' \rightarrow 0$, where $(x_2^\ell)'$ is the maximal semistable subrepresentation of $z$ and $x_2^\ell$ is isomorphic to the graded object of the Jordan-H\"older filtration of $(x_2^\ell)'$. Note that by the choice of stability parameter from Definition \ref{def:aasp}, the quotient $(x_1^\ell)'$ is stable and isomorphic to $x_1^\ell$.

Since $x_u$ is critical then $x_1^u$ is stable and $x_2^u$ is semistable. Since $f(x_u) > f(x_\ell)$ then $\slope(Q, {\bf v_1^u}) < \slope(Q, {\bf v_1^\ell}) < \slope(Q, {\bf v_2^\ell}) < \slope(Q, {\bf v_2^u})$ and so $\mathcal{H}^0(Q, {\bf v_2^\ell}, {\bf v_1^u}) = 0$. Therefore $(x_2^\ell)'$ is a subrepresentation of $x_2^u$. 

If the image of $x_1^u \rightarrow z$ intersects with the image of $(x_2^\ell)' \rightarrow z$ then, since $(x_2^\ell)' \rightarrow x_2^u$ is injective, we have a non-zero map $x_1^u \rightarrow x_2^u$ contradicting the fact that $0 \rightarrow x_1^u \rightarrow z \rightarrow x_2^u \rightarrow 0$ is a short exact sequence. Therefore the image of $x_1^u \rightarrow z$ maps injectively to $(x_1^\ell)'$ and so $x_1^u$ is a subrepresentation of $(x_1^\ell)'$. This is summarised in the diagram below.
\begin{equation*}
\xymatrix@R=1em{
& & & & 0 \ar[dl] \\ 
& & & (x_2^\ell)' \ar[dl] \ar[d] \\
0 \ar[r] & x_1^u \ar[r] \ar[d] & z \ar[r] \ar[dl] & x_2^u \ar[r] & 0 \\ 
 & (x_1^\ell)' \ar[dl] \\
0
}
\end{equation*}
The result of Lemma \ref{lem:narasimhan-ramanan} shows that the extension class $e$ is in the kernel of the pullback map $\mathcal{H}^1(Q, {\bf v_2^u}, {\bf v_1^u}) \rightarrow \mathcal{H}^1(Q, {\bf v_2^u}, {\bf v_1^\ell})$. Therefore we have proved
\begin{lemma}\label{lem:necessary-flow-line}
If $x_u = x_1^u \oplus x_2^u$ and $x_\ell = x_1^\ell \oplus x_2^\ell$ are connected by a flow line then $x_1^\ell$ is isomorphic to a Hecke modification of $x_1^u$. Moreover, there exists an extension $0 \rightarrow x_1^u \rightarrow z \rightarrow x_2^u \rightarrow 0$ such that the extension class $e \in \mathcal{H}^1(Q, {\bf v_2^u}, {\bf v_1^u})$ pulls back to zero in $\mathcal{H}^1(Q, {\bf v_2^u}, {\bf v_1^\ell})$. 
\end{lemma}

Conversely, suppose that $x_1^u$ and $x_1^\ell$ are stable and related by a Hecke modification, and that ${\bf v_1^\ell} = {\bf v_1^u} + {\bf e_k}$. If $\dim_\C \mathcal{H}^0(Q, {\bf v_2^u}, {\bf e_k}) > 0$ then Lemma \ref{lem:non-trivial-kernel} shows that $\ker \left( \mathcal{H}^1(Q, {\bf v_2^u}, {\bf v_1^u}) \rightarrow \mathcal{H}^1(Q, {\bf v_2^u}, {\bf v_1^\ell}) \right)$ is non-trivial, and so we can choose an extension $0 \rightarrow x_1^u \rightarrow z \rightarrow x_2^u \rightarrow 0$ such that the extension class $e$ pulls back to zero in $\mathcal{H}^1(Q, {\bf v_2^u}, {\bf v_1^\ell})$.  Lemma \ref{lem:positive-dimension} shows that there always exists $x_2^u$ such that $\dim_\C \mathcal{H}^0(Q, {\bf v_2^u}, {\bf e_k}) > 0$ if ${\bf v_2^u}$ has non-zero dimension at the vertex $k \in \mathcal{I}$, which is always true since ${\bf v_1^\ell} = {\bf v_1^u} + {\bf e_k}$ implies ${\bf v_2^\ell} = {\bf v_2^u} - {\bf e_k} \geq {\bf 0}$.

Since $z$ is a nontrivial extension, then the Harder-Narasimhan type of $z$ is strictly less than that of $x_u = x_1^u \oplus x_2^u$. Moreover, since $e$ pulls back to zero in $\mathcal{H}^1(Q, {\bf v_2^u}, {\bf v_1^\ell})$ then $x_1^\ell$ is a quotient of $z$ by Lemma \ref{lem:narasimhan-ramanan}. Therefore there is an extension $0 \rightarrow x_2^\ell \rightarrow z \rightarrow x_1^\ell \rightarrow 0$ and using the same argument as the proof of Lemma \ref{lem:necessary-flow-line} we can show that $x_2^\ell$ is a subrepresentation of $x_2^u$. If $x_2^\ell$ is unstable, then the maximal semistable subrepresentation of $x_2^\ell$ has slope greater than $\slope(x_2^\ell)$. Since ${\bf v_2^\ell} = {\bf v_2^u} - {\bf e_k}$, then this contradicts the fact that the Harder-Narasimhan type of $z$ is strictly less than that of $x_u$. Therefore $x_2^\ell$ is the maximal semistable subrepresentation of $z$. If $x_2^\ell$ is not polystable, then replace $x_2^\ell$ by the graded object of the Jordan-H\"older filtration of $x_2^\ell$. Therefore $x_\ell = x_1^\ell \oplus x_2^\ell$ is isomorphic to the graded object of the HNJH filtration of $x$. Therefore we have proved

\begin{lemma}\label{lem:sufficient-miniscule-flow-line}
Let $x_1^u$ be a minimiser of $\| \mu - \alpha \|^2$ on $\nu_{\bf v_1^u}^{-1}(0)$, let $x_1^\ell \in \nu_{{\bf v_1^u} + {\bf e_k}}^{-1}(0)^{st}$ be a Hecke modification of $x_1^u$ and choose $x_2^u$ such that $\dim_\C \mathcal{H}^0(Q, {\bf v_2^u}, {\bf e_k}) > 0$ and $x_u = x_1^u \oplus x_2^u$ is critical for $\| \mu - \alpha \|^2$ on $\nu_{\bf v}^{-1}(0)$ where ${\bf v} = {\bf v_1^u}+{\bf v_2^u}$. Then there exists $x_2^\ell$ such that $x_\ell = x_1^\ell \oplus x_2^\ell$ is isomorphic to a critical point connected by a flow line to $x_u$.
\end{lemma}

Now we can use Lemmas \ref{lem:necessary-flow-line} and \ref{lem:sufficient-miniscule-flow-line} to show that the Hecke correspondence is determined by pairs of critical points connected by a flow line. 

\begin{definition}\label{def:flow-line-spaces}
Let $C_\ell$ and $C_u$ be two critical sets with $f(C_\ell) < f(C_u)$. First define the \emph{space of representations that flow up to $C_u$ and down to $C_\ell$}
\begin{equation*}
\tilde{\mathcal{F}}_{\ell, u} := \{ y \in \nu^{-1}(0) \mid \lim_{t \rightarrow \infty} \phi(y, t) \in C_\ell, \lim_{t \rightarrow -\infty} \phi(y, t) \in C_u \} .
\end{equation*}
The gradient flow $\phi(y, \cdot)$ defines a natural $\mathbb{R}$-action on $\tilde{\mathcal{F}}_{\ell, u}$. The \emph{space of flow lines} connecting $C_\ell$ and $C_u$ is $\mathcal{F}_{\ell, u} := \tilde{\mathcal{F}}_{\ell, u} / \mathbb{R}$. The \emph{space of pairs of critical points connected by a flow line} is
\begin{equation*}
\mathcal{P}_{\ell, u} := \{ (x_\ell, x_u) \in C_\ell \times C_u \mid \exists y \in \tilde{\mathcal{F}}_{\ell, u} \, \, \text{such that} \, \lim_{t \rightarrow \infty} \phi(y, t) = x_\ell, \lim_{t \rightarrow -\infty} \phi(y, t) = x_u \}
\end{equation*}
\end{definition}
The canonical projection maps taking a flow line to its upper and lower endpoints fit into the following commutative diagram
\begin{equation}\label{eqn:flow-projection}
\xymatrix{
 & \tilde{\mathcal{F}}_{\ell, u} \ar[d] \ar@/_1.2pc/[dddl] \ar@/^1.2pc/[dddr] & \\
 & \mathcal{F}_{\ell, u} \ar[d] \ar@/_0.5pc/[ddl] \ar@/^0.5pc/[ddr] & \\
 & \mathcal{P}_{\ell, u} \ar[dl] \ar[dr] & \\
C_\ell & & C_u 
}
\end{equation}

Lemma \ref{lem:quotient-critical} shows that in the special case of the stability parameter from Definition \ref{def:aasp}, each critical set (modulo isomorphism) splits as a product
\begin{equation*}
C_{\bf v_1^\ell} / K_{\bf v} \cong \mathcal{M}_\alpha(Q, {\bf v_1^\ell}, \mathcal{R}) \times \mathcal{M}_0(Q, {\bf v} - {\bf v_1^\ell}, \mathcal{R})
\end{equation*}
and so there is a projection map $C_{\bf v_1^\ell} \rightarrow \mathcal{M}_\alpha(Q, {\bf v_1^\ell}, \mathcal{R})$. Given a vertex $k \in \mathcal{I}$ such that ${\bf v_1^\ell}$ has positive dimension at $k$, the critical sets $C_{\bf v_1^\ell}$ and $C_{\bf v_1^\ell - e_k}$ satisfy $f(C_{\bf v_1^\ell}) < f(C_{\bf v_1^\ell - e_k})$. Since the gradient flow is $K_{\bf v}$-equivariant, then there is an induced subvariety $\mathcal{M}_{\bf v_1^\ell, v_1^\ell - e_k} \subset \mathcal{M}_\alpha(Q, {\bf v_1^\ell}, \mathcal{R}) \times \mathcal{M}_\alpha(Q, {\bf v_1^\ell} - {\bf e_k}, \mathcal{R})$ such that the following diagram commutes
\begin{equation*}
\xymatrix{
 & \mathcal{P}_{\bf v_1^\ell, v_1^\ell - e_k} \ar[dl] \ar[dr] \ar[d] & \\
C_{\bf v_1^\ell} \ar[d] & \mathcal{M}_{\bf v_1^\ell, v_1^\ell - e_k} \ar[dl] \ar[dr] & C_{\bf v_1^\ell - e_k} \ar[d] \\
\mathcal{M}_\alpha(Q, {\bf v_1^\ell}, \mathcal{R}) & & \mathcal{M}_\alpha(Q, {\bf v_1^\ell} - {\bf e_k}, \mathcal{R})
}
\end{equation*}

\begin{theorem}\label{thm:miniscule-flow-hecke}
$\mathcal{M}_{\bf v_1^\ell, v_1^\ell - e_k}$ is the Hecke correspondence.
\end{theorem}

\begin{proof}
Lemma \ref{lem:necessary-flow-line} shows that if $x_u = x_1^u \oplus x_2^u$ and $x_\ell = x_1^\ell \oplus x_2^\ell$ are connected by a flow line, then the pair of isomorphism classes  $([x_1^\ell], [x_1^u])$ is in the Hecke correspondence $\mathcal{B}_k(Q, {\bf v_1^\ell}, \mathcal{R})$. Conversely, if $([(x_1^u)'], [(x_1^\ell)']) \in \mathcal{B}_k(Q, {\bf v_1^\ell}, \mathcal{R})$ then Lemma \ref{lem:sufficient-miniscule-flow-line} shows that there exist representatives $x_1^u$ and $x_1^\ell$ for $[(x_1^u)']$ and $[(x_1^\ell)']$ respectively, and there exist $x_2^u$ and $x_2^\ell$ such that $x_u = x_1^u \oplus x_2^u$ and $x_\ell = x_1^\ell \oplus x_2^\ell$ are critical points connected by a flow line.
\end{proof}


\end{document}